\theoremstyle{plain}
\newtheorem{theorem}{Theorem}[section]
\newtheorem{corollary}[theorem]{Corollary}
\newtheorem{lemma}[theorem]{Lemma}
\newtheorem{proposition}[theorem]{Proposition}
\theoremstyle{definition}
\newtheorem{definition}[theorem]{Definition}
\newtheorem{example}[theorem]{Example}
\newtheorem{conjecture}[theorem]{Conjecture}
\newtheorem{remark}[theorem]{Remark}
\newcommand{\ordp}{\nu_p}
\newcommand{\ordfp}{\overline{\nu}_{\mathfrak{p}}}
\newcommand{\ordfpp}{\nu_{\mathfrak{p}}}
\newcommand{\ordfPP}{\nu_{\mathfrak{P}}}
\newcommand{\incp}{\mathrm{inc}_p}
\newcommand{\incfp}{\mathrm{inc}_{\mathfrak{p}}}
\newcommand{\dKp}{D_{K,\mathfrak{p}}}
\newcommand{\pfloor}[1]{\lfloor #1 \rfloor_{p}}
\newcommand{\ld}{\mathrm{ld}}
\newcommand{\ldQ}{\mathrm{ld}_{\mathbb{Q},p}}
\newcommand{\ldQp}{\mathrm{ld}_{\mathbb{Q}_p,p}}
\newcommand{\ldKp}{\mathrm{ld}_{K,\mathfrak{p}}}
\begin{document}

\baselineskip=17pt

\title{A Generalization of Arithmetic Derivative to $p$-adic Fields and Number Fields}

\author{Brad Emmons}
\address{Utica University \\ 1600 Burrstone Road \\ Utica, NY 13502 \\ USA}
\email{bemmons@utica.edu}

\author{Xiao Xiao}
\address{Utica University \\ 1600 Burrstone Road \\ Utica, NY 13502 \\ USA}
\email{xixiao@utica.edu}

\date{}

\begin{abstract}
The arithmetic derivative is a function from the natural numbers to itself that sends all prime numbers to $1$ and satisfies the Leibniz rule. The arithmetic partial derivative with respect to a prime $p$ is the $p$-th component of the arithmetic derivative. In this paper, we generalize the arithmetic partial derivative to $p$-adic fields (the local case) and the arithmetic derivative to number fields (the global case). We study the dynamical system of the $p$-adic valuation of the iterations of the arithmetic partial derivatives. We also prove that for every integer $n\geq 0$, there are infinitely many elements with exactly $n$ anti-partial derivatives. In the end, we study the $p$-adic continuity of arithmetic derivatives.
\end{abstract}

\subjclass[2020]{Primary: 11A25, Secondary: 11R04}

\keywords{Arithmetic derivative, arithmetic partial derivative, $p$-adic fields, number fields, $p$-adic continuity}

\maketitle 

\section{Introduction}

Let $\mathbb{N} = \{0, 1, 2, \ldots\}$. The arithmetic derivative is a function $D: \mathbb{N} \to \mathbb{N}$ that satisfies the following two properties: $D(p) = 1$ for all primes $p$, and the Leibniz rule, $D(xy) = D(x)y+xD(y)$ for all $x, y \in \mathbb{N}$. One of the questions on the 1950 Putnam competition \cite{putnam} asked the contestants to predict the limit of the sequence $63, D(63), D^2(63), \ldots$. Many sources cite this as the origin of the arithmetic derivative.  However we were able to find a paper by Shelly \cite{shelly} published in 1911 which introduced this topic as well as some of the basic properties and generalizations of this function.

One can ask a more general question.  If we fix $x \in \mathbb{N}$, what is the limit of the sequence $x, D(x), D^2(x), \ldots$. This is not easy to predict in general. Ufnarovski and \AA hlander made the following conjecture.

\begin{conjecture}{\cite[Conjecture 2]{Ufnarovski}} \label{conjecture:barbeau}
For every $x \in \mathbb{N}$, exactly one of the following could happen: either $D^i(x) = 0$ or $p^p$ for some prime $p$ for sufficiently large $i$, or $\displaystyle \lim_{i \to +\infty} D^i(x) = +\infty$.
\end{conjecture}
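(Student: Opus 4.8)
The plan is to convert the qualitative dichotomy into a statement about the periodic orbits of $D$. First I would record the elementary but crucial observation that a trajectory which does not diverge must be eventually periodic: the values $D^i(x)$ lie in $\mathbb{N}$, so if $\lim_{i\to\infty} D^i(x) \ne +\infty$ then some value $v$ is attained for infinitely many indices; choosing $i<j$ with $D^i(x)=D^j(x)=v$ and applying $D$ repeatedly gives $D^{i+t}(x)=D^{j+t}(x)$ for all $t\ge 0$, so the sequence is eventually periodic with period dividing $j-i$. The mutual exclusivity (``exactly one'') is immediate, so the real content of the conjecture is equivalent to the assertion that every periodic orbit of $D$ is a single fixed point equal to $0$ or to $p^p$.

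Next I would classify the fixed points. For $n\ge 1$ with factorization $n=\prod_i p_i^{a_i}$ one has $D(n)=n\sum_i a_i/p_i$, so $D(n)=n$ forces $\sum_i a_i/p_i=1$. Clearing denominators and using that the $p_i$ are distinct primes, each $p_i$ must divide $a_i$; writing $a_i=p_i c_i$ reduces the equation to $\sum_i c_i = 1$, whence exactly one $c_i=1$ and the rest vanish, i.e.\ $n=p^p$. Together with the obvious fixed point $n=0$ (and $D(1)=0$), this shows the nonzero fixed points are precisely the numbers $p^p$. The conjecture therefore collapses to the single assertion that \emph{$D$ has no periodic orbit of length at least $2$}.

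To attack cycles I would track $p$-adic valuations along the orbit, which is exactly the dynamical system emphasized in this paper. Writing $D(n)/n=\sum_q \nu_q(n)/q$, the multipliers around a hypothetical cycle $n_0\to\cdots\to n_{k-1}\to n_0$ must multiply to $1$. The key structural fact is that for a fixed prime $p$ the valuation usually strictly decreases: if $p\nmid \nu_p(n)$ then $\nu_p(n)/p$ is the unique summand of $D(n)/n$ with negative $p$-adic valuation, giving $\nu_p(D(n))=\nu_p(n)-1$. Since a cycle cannot sustain a strict decrease in any coordinate, every prime dividing some $n_j$ would have to land repeatedly in the degenerate regime $p\mid \nu_p(n)$. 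The strategy would then be to build a Lyapunov function --- some weighted combination of the valuations at the small primes, or of $\Omega(n)$ and the multiplier $D(n)/n$ --- that is strictly monotone off the fixed points, so that its return around a cycle becomes impossible.

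The hard part, and the reason this remains a conjecture, is precisely the degenerate regime $p\mid \nu_p(n)$. There the cancellation among the terms $\nu_q(n)/q$ makes $\nu_p(D(n))$ essentially uncontrolled, and the competition between growth driven by high powers of small primes (e.g.\ $\nu_2(n)$ large forces $D(n)>n$) and decay driven by large prime factors can plausibly balance. I do not expect to produce a single global monotone invariant controlling all primes simultaneously; the realistic outcome of this plan is a proof for restricted families --- numbers whose valuations avoid the degenerate regime, or orbits confined to a bounded set of primes --- together with a reduction of the full conjecture to a sharp statement about that degenerate case, which is the most one can currently hope to establish.
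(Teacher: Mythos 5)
This statement is an open conjecture (Conjecture~2 of Ufnarovski and \AA hlander), quoted by the paper as motivation; the paper contains no proof of it, and neither does your proposal. What you have written is correct as far as it goes, but you concede in your own final paragraph that the plan cannot be completed, so it must be assessed as an attempt with a genuine gap rather than as an alternative proof.

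The parts you do establish are sound and are essentially the standard reductions already present in the Ufnarovski--\AA hlander paper: (i) a non-divergent orbit in $\mathbb{N}$ is eventually periodic; (ii) the nonzero fixed points of $D$ are exactly the numbers $p^p$, by the argument that $\sum_i a_i/p_i=1$ forces $p_i\mid a_i$ for each $i$ and hence $n=p^p$; (iii) the conjecture is therefore equivalent to the nonexistence of periodic orbits of length at least $2$; and (iv) if $p\nmid\nu_p(n)$ and $\nu_p(n)\neq 0$ then $\nu_p(D(n))=\nu_p(n)-1$, since $\nu_p(n)/p$ is then the unique summand of $D(n)/n$ with negative $p$-adic valuation. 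The gap is everything after that: you propose to find a Lyapunov function ruling out cycles, but you exhibit no such function, and you correctly identify why the obvious candidates fail --- when $p\mid\nu_p(n)$ the valuation $\nu_p(D(n))$ can increase, and the interaction between different primes in the sum $\sum_q\nu_q(n)/q$ is exactly what makes the full derivative intractable. This coupling of primes is the reason the present paper proves the analogous trichotomy only for the arithmetic \emph{partial} derivative, where a single valuation evolves autonomously via $\nu_{\mathfrak{p}}(D_{K,\mathfrak{p}}(x))-\nu_{\mathfrak{p}}(x)=\nu_{\mathfrak{p}}(\nu_{\mathfrak{p}}(x))-1$ and the periodicity analysis of Theorem~\ref{theorem:main1} (and of \cite{EX1} over $\mathbb{Q}$) can be carried out; no such closed recursion exists for $D$ itself, and your proposal supplies nothing to replace it.
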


We note that Shelly \cite{shelly} alluded to this conjecture and Barbeau \cite{barbeau} made a similar conjecture. One corollary of this conjecture is that if the sequence $x, D(x), D^2(x), \ldots$ is eventually periodic, then the period is $1$.  That is $D^k(x) = p^p$ for some prime $p$ when $k \gg 0$. Given $y > 1$, it is not hard to show \cite[Corollary 3]{Ufnarovski} that there are finitely many (possibly 0) $x$ such that $D(x) = y$. We call $x$ an anti-derivative of $y$. Ufnarovski and \AA hlander made the following conjecture.

\begin{conjecture}{\cite[Conjecture 8]{Ufnarovski}} \label{conjecture:ufnarovski}
For every integer $n \geq 0$ there are infinitely many $x> 0$ such that $x$ has exactly $n$ anti-derivatives.
\end{conjecture}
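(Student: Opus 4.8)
The plan is to reduce the count $|A(y)|$ of anti-derivatives of $y$ to a problem about representing $y$ by sums of primes, and then to realize each prescribed count $n$ through a structured infinite family of targets $y$. I would first fix $A(y) = \{x \in \mathbb{N} : D(x) = y\}$ and record the boundedness already available from \cite[Corollary 3]{Ufnarovski}: for $y \geq 2$ every element of $A(y)$ is composite (since $D(p) = 1$ and $D(0) = D(1) = 0$), and the two-factor anti-derivatives are bounded, e.g. $x = pq$ with $p + q = y$ forces $x \leq (y/2)^2$ by the AM--GM inequality. It then suffices to enumerate the composite $x$ with $D(x) = y$, which I would stratify by $\Omega(x)$ and by $\nu_2(x)$ using the parity identity that $D(x)$ is odd precisely when $x = 2m$ with $m$ odd, or $x$ is odd with $\Omega(x)$ odd; this cleanly separates the even-$y$ and odd-$y$ regimes.

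The core of the reduction is the two-factor stratum. If $x = pq$ with $p \neq q$ then $D(x) = p + q$, and if $x = p^2$ then $D(x) = 2p$, so the anti-derivatives with $\Omega(x) = 2$ are in bijection with the representations of $y$ as an unordered sum of two primes (with the equal-primes case contributing $p^2$). I would then bound the higher strata: from $D(x)/x = \sum_i a_i/p_i$ one sees that $x$ with $\Omega(x) \geq 3$ and $D(x) = y$ must carry small prime factors in constrained patterns, so they can be enumerated by a finite search controlled by the bound $x \leq (y/2)^2$. The aim is to show that for targets $y$ chosen to avoid prescribed small-prime divisibility patterns, the higher strata contribute a controlled (typically zero) number of anti-derivatives, reducing $|A(y)|$ to the number of prime partitions of $y$.

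Granting this reduction, realizing each $n$ becomes representation counting. For $n = 0$ I would produce infinitely many $y$ outside the image of $D$: odd $y$ with $y - 2$ composite (killing $x = 2p$), admitting no solution of $y = m + 2D(m)$, and with no odd $x$ of odd $\Omega$ mapping to $y$; a sieve or CRT construction placing $y$ in a fixed residue class should eliminate all low-complexity representations at once. For $n \geq 1$ I would build an infinite family of $y$ possessing exactly $n$ two-factor representations while congruence conditions forbid every further representation and every higher stratum, so that $|A(y)| = n$ along the family.

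The main obstacle is the uniform control of these counts. Pinning a target $y$ to exactly $n$ prime partitions while certifying that \emph{no} parasitic anti-derivative survives in the higher strata is an additive prime problem of Goldbach type: for even $y$ the partition count grows with $y$, so witnesses for a fixed small $n$ must be sought among odd or specially structured $y$, where establishing simultaneously the lower bound (at least $n$ representations exist) and the upper bound (no further ones) across an infinite family is delicate. I expect the lower bound and the elimination of higher strata to be reachable by sieve methods, whereas forcing the count to equal $n$ exactly is the step that in full generality demands either strong analytic input or remains conditional on Goldbach-type statements. This rigidity gap is exactly why the analogue for the arithmetic \emph{partial} derivative, whose representations are governed by a single prime, is the version that admits an unconditional proof.
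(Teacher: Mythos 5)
There is a genuine gap here, but it is worth being precise about what kind: the statement you were asked to prove is Conjecture~\ref{conjecture:ufnarovski}, which this paper does \emph{not} prove and does not claim to prove --- it is quoted from Ufnarovski and \AA hlander as an open problem. What the paper actually establishes are single-prime analogues: the case of the arithmetic partial derivative over $\mathbb{Q}$ (in the authors' earlier paper \cite{EX1}) and over $p$-adic fields (Theorem~\ref{theorem:infinite_derivative}, via Lemmas~\ref{lemma:define_k} and~\ref{lemma:define_b}). So there is no ``paper's own proof'' to match your attempt against; any purported proof of the conjecture itself should be viewed with suspicion from the outset.

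Your proposal, judged on its own terms, does not close the conjecture, and you in fact concede the fatal step yourself. The reduction of the two-factor stratum to prime partitions is fine ($D(pq)=p+q$, $D(p^2)=2p$), and bounding anti-derivatives of a fixed $y$ by a finite search is already in \cite[Corollary 3]{Ufnarovski}. But the load-bearing step --- producing, for each fixed $n$, infinitely many targets $y$ that have \emph{exactly} $n$ representations as a sum of two primes while simultaneously excluding all anti-derivatives with $\Omega(x)\geq 3$ --- is an additive-prime rigidity problem of Goldbach type for which no unconditional technique exists; sieve methods give upper and lower bounds of the right order but cannot pin an exact count along an infinite family. This is precisely why the conjecture is still open, and why the tractable results (both in \cite{EX1} and in Theorem~\ref{theorem:infinite_derivative} here) concern the partial derivative: there the equation $D_{K,\mathfrak{p}}(x)=y$ is governed by the single valuation $\ordfpp(x)$, and the count of solutions is controlled by the purely combinatorial set $C(x_0)$ of Theorem~\ref{theorem:primitive}, whose size can be engineered exactly (Lemma~\ref{lemma:define_b}) with no analytic input. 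Your closing paragraph identifies this distinction correctly; the honest conclusion is that your outline is a reduction to an open problem, not a proof.
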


Let $\ordp$ be the $p$-adic valuation. One can show that $D(0) = 0$ and for $x > 0$, $D$ has the following explicit formula
\[D(x) = x \sum_p \dfrac{\ordp(x)}{p}.\]
This is a finite sum as there are only finitely many $p$ such that $\ordp(x) \neq 0$. It is natural to generalize $D$ to $\mathbb{Q}$ as $\ordp$ is well-defined over $\mathbb{Q}$.  We will use $D$ to denote the arithmetic derivative defined on $\mathbb{Q}$ in the introduction section. This generalization allows positive integers to have more anti-derivatives than they have in $\mathbb{N}$. For example, $2$ does not have an anti-derivative in $\mathbb{N}$ but $D(-21/16) = 2$. The only anti-derivatives of $1$ in $\mathbb{N}$ are the prime numbers but $D(-5/4) = 1$. Another direction to generalize $D$ is, instead of differentiating with respect to all prime numbers, we only differentiate with respect to a set of primes. More specifically, let $T \subset \mathbb{P}$ be a nonempty set of rational primes. For $0 \neq x \in \mathbb{Q}$, we define 
\[D_{\mathbb{Q}, T}(x) = x \sum_{p \in T} \dfrac{\ordp(x)}{p}.\]
This is called the arithmetic subderivative over $\mathbb{Q}$ with respect to $T$, first introduced by Haukkanen, Merikoski, and Tossavainen \cite{HMT4}. If $T = \mathbb{P}$, then $D_{\mathbb{Q}, T} = D$. If $T = \{p\}$ contains a single prime number, then $D_{\mathbb{Q},T} = D_{\mathbb{Q},p}$ is called the arithmetic partial derivative with respect to $p$.

The authors of this paper have proved \cite{EX1} that the following sequence of integers
\[\ordp(x),\; \ordp(D_{\mathbb{Q},p}(x)),\; \ordp(D^2_{\mathbb{Q},p}(x)),\; \ldots\]
is eventually periodic of period $\leq p$. An immediate corollary of this result is a positive answer to a conjecture similar to Conjecture \ref{conjecture:barbeau} in the case of arithmetic partial derivative. We have to replace $p^p$ in Conjecture \ref{conjecture:barbeau} by $bp^p$ where $\ordp(b) =0$ since $D_{\mathbb{Q},p}(bp^p) = bp^p$. In the same paper, we also proved a criterion to determine when an integer has integral anti-partial derivatives, and as application, we gave a positive answer to Conjecture \ref{conjecture:ufnarovski} in the case of arithmetic partial derivative.

A natural next step is to generalize the arithmetic derivative to number fields and their rings of integers. The Leibniz rule can be used to generalize $D$ to all unique factorization domains (UFD) $R$. In every equivalence class $\{x \textrm{ irreducible in } R \mid x = ux', u \in R^{\times}\}$, we choose an element $x_0$ and define $D_R(x_0) = 1$ (similar to $D(p) = 1$). For all units $u \in R^{\times}$, we define $D_R(u) = 0$ (similar to $D(\pm 1) = 0$). By the unique factorization property and the Leibniz rule, we can extend the definition of $D$ to the entire ring $R$ as well as its field of fraction $\textrm{Frac}(R)$. Let $\mathcal{P}$ be a set of chosen irreducible elements as described above, one from each equivalence classes. For every $x \in \textrm{Frac}(R)$, if $x = up_1\cdots p_k q_1^{-1}\cdots q_{\ell}^{-1}$ with $u \in R^{\times}$ and $p_i, q_j \in \mathcal{P}$ ($p_i, q_j$ are not necessarily pairwise different) then 
\[D_R(x) = x\Big(\sum_{i=1}^k \dfrac{1}{p_i} - \sum_{j=1}^{\ell} \dfrac{1}{q_j}\Big).\]
There are two major obstacles with this generalization. First, for every number field $K$, it is well known that $\mathcal{O}_K$ is not necessarily a UFD. It has been proved that this idea will fail for non-UFD \cite{Haukkanen}. Second, this definition of $D(x)$ depends on the choice of irreducible elements set $\mathcal{P}$ as well as the ring. There is no canonical way to choose $x_0$ within each equivalence classes. Also, for an irreducible element $x \in \mathcal{P} \subset R$, we have $D_R(x) = 1$. But if we consider $x \in \textrm{Frac}(R)$ and define $D$ over $\textrm{Frac}(R)$, then we will get $D_{\textrm{Frac}(R)}(x) = 0$ since all nonzero elements of $\textrm{Frac}(R)$ are invertible. In other words, suppose $x \in R_1 \subset R_2$, we do not necessarily have $D_{R_1}(x) = D_{R_2}(x)$. This phenomenon makes it hard to generalize $D$ to all number fields in a consistent way using this definition.

To get around the first obstacle, Mistri and Pandey \cite{Pandey} defined the arithmetic derivative of an ideal in the ring of integers $\mathcal{O}_K$ of a number field $K$. This generalization uses the fact that every fractional ideal of $K$ can be uniquely factorized into a product of prime ideals in $\mathcal{O}_K$. Suppose $I = \mathfrak{p}_1 \mathfrak{p}_2 \cdots \mathfrak{p}_k$ is an ideal of $\mathcal{O}_K$ where $\mathfrak{p}_i$ are primes ideals of $\mathcal{O}_K$ with $\mathfrak{p}_i \mid p_i$ (again $\mathfrak{p}_i$ and $p_i$ are not necessarily pairwise different). Then the arithmetic derivative of $I$ is an ideal of $\mathcal{O}_K$ defined by
\[D_K(I) = \Big( p_1p_2\cdots p_k \sum_{i=1}^k \dfrac{1}{p_i} \Big).\]
This means that the arithmetic derivative of every ideal of $\mathcal{O}_K$ is a principal ideal in $\mathcal{O}_K$ generated by an integer. From the definition, it is easy to see that $D_{\mathbb{Z}}(n) = (D(n))$ where $D_{\mathbb{Z}}(n)$ is the arithmetic derivative of the ideal $(n)$ and $D(n)$ is the usual arithmetic derivative of an integer. This coincidence is certainly nice as part of the generalization but the second obstacle mentioned above still exists. For example, let $K = \mathbb{Q}(i)$ and we have $2\mathcal{O}_K = (1+i)(1-i)$, hence $D_K(2\mathcal{O}_K)= 4\mathcal{O}_K$. On the other hand, $D_{\mathbb{Z}}(2\mathbb{Z}) = \mathbb{Z}$. This means that if $x \in K_1 \subset K_2$, we do not necessarily have $D_{K_1}(x\mathcal{O}_{K_1}) \subset D_{K_2}(x\mathcal{O}_{K_2})$.

In this paper, we propose a new way to define the arithmetic derivative (resp. the arithmetic subderivative) $D_K$ (resp. $D_{K,T}$) on every finite Galois extension $K/\mathbb{Q}$ in a consistent way in the following sense. First $D_K(x) = D(x)$ for all $x \in \mathbb{Q}$, so $D_K$ is a true extension of $D$ from $\mathbb{Q}$ to $K$. Second, if $K_1$ and $K_2$ are two finite Galois extensions, then for every $x \in K_1 \cap K_2$, we have $D_{K_1}(x) = D_{K_2}(x)$. This means that the definition of arithmetic derivative of $x$ does not depend on the choice of the Galois extension. Because the arithmetic derivative satisfies $D_K(x)/x \in \mathbb{Q}$, we can even generalize it to every number field $L/\mathbb{Q}$ (not necessarily Galois) by taking a restriction $D_L(x) := D_{K}(x) = x \cdot (D_{K}(x)/x) \in L$ where $K$ is a finite Galois extension containing $x$. Please refer to Section \ref{section:numberfields} for detailed definition.

At the local level, suppose $K$ is a finite extension of the $p$-adic rational numbers $\mathbb{Q}_p$.  Let $\ordfpp$ be the unique valuation on $K$ that extends the $p$-adic valuation $\ordp$ on $\mathbb{Q}$. It only makes sense to study the arithmetic partial derivative $\dKp$ over $K$. As part of the study of the behavior of the sequence $x, \dKp(x), \dKp^2(x), \ldots$, we give a complete description of the behavior of the following so-called $\ordfpp$ sequence of $x$
\[\ordfpp(x),\; \ordfpp(\dKp(x)),\; \ordfpp(\dKp^2(x)),\; \ldots.\]

\begin{theorem} \label{theorem:main0}
Let $K$ be a finite extension over $\mathbb{Q}_p$ and $\mathfrak{p}$ be the unique prime ideal of $\mathcal{O}_K$. For every $x \in K$, we have the following three properties.
\begin{enumerate}
    \item If $\ordfpp(\ordfpp(x)) \geq 0$ or $\ordfpp(x) \in \{ 0, +\infty \} $, then the $\ordfpp$ sequence of $x$ is eventually periodic of period $\leq p$. 
    \item If $\ordfpp(\ordfpp(x)) < 0$, then the $\ordfpp$ sequence of $x$ converges to $-\infty$.
    \item The $\ordfpp$ sequence of $x$ is eventually $+\infty$ if and only if \[\ordfpp(x) \in \{0, 1, \ldots, p-1, +\infty\}.\]
    
\end{enumerate}
\end{theorem}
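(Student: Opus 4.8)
The plan is to reduce all three statements to a single one-dimensional recurrence. From the defining formula $\dKp(y) = y\,\ordfpp(y)/p$, together with $\ordfpp(p) = 1$ and the fact that $\ordfpp(a) = \ordp(a)$ for every $a \in \mathbb{Q}$ (since $\ordfpp$ extends $\ordp$), setting $a_n := \ordfpp(\dKp^n(x))$ yields
\[ a_{n+1} = a_n + \ordp(a_n) - 1, \]
under the conventions that $a_n = 0$ forces $a_{n+1} = +\infty$ (then $\dKp^{n+1}(x) = 0$) and that $+\infty$ is fixed. Two observations organize the whole argument: each increment $\ordp(a_n) - 1$ is an ordinary integer, so the orbit remains in the single coset $a_0 + \mathbb{Z}$ of the value group $\tfrac{1}{e}\mathbb{Z}$ ($e$ the ramification index); and whether $\ordp(a_n)$ is negative is controlled by the ultrametric inequality. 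Thus the theorem becomes a statement about the orbit of $a_0$ under $f(a) = a + \ordp(a) - 1$.

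I would dispatch part (2) first. If $\ordp(a_0) = -k < 0$, then since $f$ adds the integer $\ordp(a_0) - 1$ and $-k < 0 \le \ordp(\ordp(a_0) - 1)$, the ultrametric inequality gives $\ordp(a_1) = -k$, and inductively $\ordp(a_n) = -k$ for all $n$. Hence $a_{n+1} - a_n = -k - 1 \le -2$ is constant, so $a_n \to -\infty$.

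For part (1), assume $\ordp(a_0) \ge 0$ (the values $a_0 \in \{0, +\infty\}$ being immediate). The same bookkeeping shows $\ordp(a_n) \ge 0$ is preserved, so $f$ acts on $(a_0 + \mathbb{Z}) \cap \{\ordp \ge 0\}$ by decreasing $a_n$ by $1$ when $\ordp(a_n) = 0$, fixing it when $\ordp(a_n) = 1$, and raising it by $\ordp(a_n) - 1$ when $\ordp(a_n) \ge 2$. The engine of the proof is that a run of descent steps meets a point with $\ordp \ge 1$ within at most $p$ steps, since among any $p$ consecutive elements of the coset exactly one is divisible by $\mathfrak{p}$. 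I would then show the orbit is bounded: it cannot drop below the multiple of $\mathfrak{p}$ it first lands on (where $f$ does not decrease), and it cannot escape upward, because an upward jump has size $\ordp(a_n) - 1$, logarithmic in $|a_n|$, and is immediately followed by a collapse of the valuation to $\ordp(\ordp(a_n) - 1) < \ordp(a_n)$. A bounded orbit in the discrete coset $a_0 + \mathbb{Z}$ is eventually periodic. To pin the period at $\le p$ I would examine a nontrivial cycle directly: its maximum has $\ordp = 0$ and its minimum has $\ordp \ge 2$, each descent between consecutive multiples of $\mathfrak{p}$ has length $\le p - 1$, and counting confines the cycle to an interval of length $< p$. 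This is the local refinement of the rational result of \cite{EX1}, now carried out on $a_0 + \mathbb{Z} \subseteq \tfrac{1}{e'}\mathbb{Z}$ with $e'$ the prime-to-$p$ part of $e$.

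Finally, for part (3), the orbit is eventually $+\infty$ exactly when it reaches $0$ (or starts at $+\infty$). Solving $f(a) = 0$ shows $a = 1$ is the only finite preimage of $0$, and iterating backward shows the only finite starting values whose orbit reaches $0$ are $0, 1, \ldots, p - 1$: the value $p - 1$ has no preimage under $f$, while any $a_0 \ge p$ with $\ordp(a_0) \ge 0$ descends only to the first multiple of $\mathfrak{p}$ and is then trapped in a cycle or a fixed point. Conversely each $a_0 \in \{0, 1, \ldots, p - 1\}$ descends by $1$'s to $0$ and then to $+\infty$, giving the stated equivalence. The main obstacle is part (1): turning the descent-within-$p$-steps observation into a simultaneous proof that the orbit neither escapes to $+\infty$ nor admits period exceeding $p$ is where the real care is required, while the other two parts are comparatively short consequences of the same recurrence.
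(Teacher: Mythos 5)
Your proposal is correct, and its core is the paper's own: the reduction to the single recurrence $a_{n+1}=a_n+\ordp(a_n)-1$ is exactly Equation \eqref{equation:ordp_inc}, and your part (2) is essentially Proposition \ref{prop:vp-infty} (the increment $k-1$ is a nonzero integer, so $\ordp(a_0)<0\leq \ordp(k-1)$ forces $\ordp(a_n)=\ordp(a_0)$ for all $n$ and the increment never changes). Where you diverge is in the packaging of the other two parts. For part (1) the paper (Proposition \ref{prop:segment}, Corollary \ref{corollary:basecase}, Theorem \ref{theorem:main1}) is explicit: it writes the $\incfp$ sequence as a concatenation of segments $\mathcal{S}_{\kappa_i,p}$ driven by the recursion $\kappa_{i+1}=\ordp(\pfloor{\kappa_i-1})$, whose strict decrease $\kappa_{i+1}<\log_p(\kappa_i)$ forces it into $\{1,\ldots,p\}$ after finitely many steps and identifies the eventual period as exactly $\kappa_N$; you instead argue softly (bounded orbit in the discrete coset $a_0+\mathbb{Z}$, hence eventually periodic by pigeonhole, then a separate cycle analysis for the bound $\leq p$). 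Both rest on the same valuation-collapse observation after an upward jump; the paper's route buys the exact period and the complete shape of the sequence, yours is shorter at the cost of that information, and your cycle argument (the minimum $\mu$ of a nontrivial cycle has $\ordp(\mu)=m\geq 2$, and the orbit can return to $\mu$ only if $\pfloor{m-1}=0$, forcing $m\leq p$) is sound --- it is Corollary \ref{corollary:basecase} read in reverse. For part (3) the paper's Lemma \ref{lemma:eventuallyinfinity} argues forward, in three cases, that $a_i\neq 0$ for all $i$ whenever $a_0\notin\{0,1,\ldots,p-1,+\infty\}$, while you compute preimages backward: each $j\in\{0,\ldots,p-2\}$ has the unique finite preimage $j+1$ under $f$ and $p-1$ has none. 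That is a clean equivalent alternative (it is the same computation the paper uses later to show $p^{p-1}$ has no anti-partial derivative). One small point to make explicit when writing up part (1): if $\ordp(a_0)=0$ the initial descent may reach $0$ itself rather than a finite value of positive valuation, which is the eventually-$+\infty$, period-$1$ case; your sketch absorbs this only implicitly.
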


See Lemma \ref{lemma:eventuallyinfinity}, Proposition \ref{prop:vp-infty}, and Theorem \ref{theorem:main1} for a proof of Theorem \ref{theorem:main0}. Using the same idea as in our previous paper \cite{EX1}, we are also able to give a positive answer to Conjecture \ref{conjecture:ufnarovski} in the $p$-adic fields case as well.

\begin{theorem}[Theorem \ref{theorem:infinite_derivative}] \label{theorem:main2}
Let $K$ be a finite extension over $\mathbb{Q}_p$. For each positive integer $n$, there are infinitely many $x_0 \in K$ such that $\dKp(x_0)$ has exactly $n$ anti-partial derivatives in $K$.
\end{theorem}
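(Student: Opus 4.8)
The plan is to reduce the counting of anti-partial derivatives to a valuation-theoretic counting problem and then realize every positive integer as such a count. First I would use the explicit formula $\dKp(z) = z\,\ordfpp(z)/p$ to describe the fibers of $\dKp$. Fix $y \in K$ with $y \neq 0$ and put $m = \ordfpp(y)$. If $\dKp(z) = y$ then $\ordfpp(z) \neq 0$ (otherwise $\dKp(z) = 0$), so writing $v = \ordfpp(z)$ forces $z = py/v$; conversely $z = py/v$ is an anti-partial derivative of $y$ exactly when its valuation is consistent, i.e. $\ordfpp(py/v) = v$. Since $v$ is a rational number we have $\ordfpp(v) = \ordp(v)$, and the consistency condition becomes
\[ v + \ordp(v) = m + 1. \]
Thus the anti-partial derivatives of $y$ are in bijection with the solutions $v \in \ordfpp(K^{\times})\setminus\{0\} = \tfrac1e\mathbb{Z}\setminus\{0\}$ of this equation (where $e$ is the ramification index of $K/\mathbb{Q}_p$), and in particular their number $N(m+1)$ depends on $y$ only through $m$. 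Reparametrizing by $k = \ordp(v)$, so that $v = (m+1)-k$, turns this into counting integers $k$ with $\ordp\big((m+1)-k\big) = k$.

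With this reduction the theorem becomes a single existence statement: for every $n \ge 1$ there is an $M \in \tfrac1e\mathbb{Z}$ with $N(M) = n$. Indeed, given such an $M$, set $m = M-1$; then every $y$ with $\ordfpp(y) = m$ has exactly $n$ anti-partial derivatives. Because $n \ge 1$, each such $y$ is itself of the form $\dKp(x_0)$ (any one of its anti-partial derivatives serves as $x_0$), and there are infinitely many $y \in K$ of valuation $m$; distinct $y$ force distinct $x_0$ since $\dKp$ is a function, which yields the infinitely many $x_0$ required.

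It then remains to construct $M$. The clean observation is that it suffices to take $M$ to be a positive integer: for integral $M$ every solution $v = M-k$ is automatically an integer, so the count becomes $N(M) = \#\{k \ge 0 : \ordp(M-k) = k\}$, which involves only $\ordp$ on $\mathbb{Z}$ and is therefore insensitive to the ramification of $K/\mathbb{Q}_p$ — it is exactly the count already treated over $\mathbb{Q}$ in \cite{EX1}. To exhibit one such $M$ directly I would use the self-referential chain $k_1 = 0$ and $k_{i+1} = k_i + p^{k_i}$, and set $M = k_{n+1}$. A short ultrametric computation shows $\ordp(M - k_i) = k_i$ for $1 \le i \le n$, because in $M - k_i = \sum_{j=i}^{n} p^{k_j}$ the lowest term $p^{k_i}$ dominates, while for every other $k \ge 0$ one finds $\ordp(M-k) < k$; hence the solution set is exactly $\{k_1,\dots,k_n\}$ and $N(M) = n$.

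The hard part is the ``no extra solutions'' half of this last step, namely that no $k$ outside the chain satisfies $\ordp(M-k) = k$. For $k > M$ this is immediate from $|M-k| < p^k$, but for $k$ lying strictly between consecutive chain elements one must pin down $\ordp(M-k)$ exactly, and this is precisely where the fast growth $k_{i+1}-k_i = p^{k_i}$ is used: any $k$ with $k_i < k < k_{i+1}$ satisfies $\ordp(M-k) = \ordp(k_{i+1}-k) \le k_i - 1 < k$. The remaining points needing care are the rigorous justification that the fiber count depends on $y$ only through $\ordfpp(y)$, and the verification that restricting to integral $M$ loses no generality. Once these are settled the whole argument is uniform in the ramification index and establishes Theorem \ref{theorem:main2}.
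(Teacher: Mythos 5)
Your proposal is correct, and it reaches the theorem by a noticeably leaner route than the paper. The paper's proof goes through the notion of a \emph{primitive} anti-partial derivative: writing $\ordfpp(x_0)=b_0p^{k_0}$, Theorem \ref{theorem:primitive} parametrizes the fiber over $\dKp(x_0)$ by $C(x_0)=\{c\ge 0:\ordp(b_0-c)=p^{k_0}c\}$, and then two separate constructions are needed --- Lemma \ref{lemma:define_k} picks $k_0=p+\cdots+p^m$ to force primitivity, and Lemma \ref{lemma:define_b} builds $b_0=c_{n+1}$ via $c_i=c_{i-1}+p^{p^{k}c_{i-1}}$ to make $|C(x_0)|=n$ --- with infinitude obtained by varying $m$. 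Your single fiber equation $v+\ordp(v)=m+1$ describes all anti-partial derivatives of $y$ at once with no privileged element, which eliminates the primitivity lemma entirely; under the change of variables $v=bp^{k}=M-k$, $k=k_0+p^{k_0}c$, your counting problem $\ordp(M-k)=k$ is literally the paper's, and your chain $k_{i+1}=k_i+p^{k_i}$ is the same self-referential recursion as the paper's $c_i$. Your ``no extra solutions'' step is sound: for $k_i<k<k_{i+1}$ the bound $0<k_{i+1}-k<p^{k_i}$ gives $\ordp(k_{i+1}-k)\le k_i-1<k$, while $\ordp(M-k_{i+1})\ge k_{i+1}$, so $\ordp(M-k)=\ordp(k_{i+1}-k)<k$. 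You also get ``infinitely many $x_0$'' more cheaply, by varying $y$ over the infinitely many elements of fixed valuation $M-1$ rather than varying the construction parameter. The two points you flag as needing care are indeed the only ones, and both are already settled by what you wrote: the bijection $v\mapsto py/v$ shows the count depends only on $\ordfpp(y)$ (and a priori on the value group $\tfrac1e\mathbb{Z}$), and your observation that an integral $M$ forces every solution $v$ to be a rational integer removes the dependence on $e$ --- in effect recovering the paper's Lemma \ref{lemma:antidev} that all anti-partial derivatives of $y$ lie in $\mathbb{Q}_p(y)$. What the paper's version buys in exchange for its extra machinery is the explicit description of each anti-partial derivative (the formulas for $b$, $k$, $\alpha$ in Theorem \ref{theorem:primitive}), which your fiber count does not need but which is of independent interest.
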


One difficulty of studying the iteration of arithmetic derivatives is that the arithmetic derivative is neither additive nor a group homomorphism. But if one considers the so-called logarithmic derivative $\ld(x) := D(x)/x$, it is not hard to see that $\ld: \mathbb{Q}^{\times} \to \mathbb{Q}$ is a group homomorphism from the multiplicative group to the additive group, just like the usual logarithmic function. As we generalize $D$ to $D_K$, we also study the generalization of $\ld$ to $\ld_K$. In particular, we have shown that $\ld_K(K^{\times})$ are also isomorphic as subgroups of $\mathbb{Q}$ for any finite Galois extension $K$; see Theorem \ref{theorem:galiso}. We also give a concrete description of the exact image of $\ld_K(K^{\times})$ when $K$ is a quadratic extension.

It is not surprising that the arithmetic derivative function $D$ is not continuous over $\mathbb{Q}$ because given two rational numbers that are close by (under the Archimedean metric), their prime factorizations can be drastically different. In fact, Haukkanen, Merikoski and Tossavainen \cite{HMT5} have shown that for every $x \in \mathbb{Q}$, the arithmetic subderivative $D_{\mathbb{Q},T}$ (and in particular the arithmetic derivative) can obtain arbitrary large values in any small neighbourhood of $x$. Therefore $D_{\mathbb{Q},T}$ is clearly not continuous with respect to the standard Archimedean topology of $\mathbb{Q}$. But what about the $p$-adic topology? In another paper, Haukkanen, Merikoski and Tossavainen \cite{HMT3} have proved that the arithmetic partial derivative $D_{\mathbb{Q},p}$ is always continuous. They have also shown in some cases, the arithmetic subderivative $D_{\mathbb{Q},T}$ can be continuous at some points but discontinuous at other points. Major cases have been left open, for example, it is unknown whether $D_{\mathbb{Q},T}$ is continuous or not at nonzero points when $T$ is an infinite set. As we generalize arithmetic partial derivatives to $p$-adic local fields and arithmetic subderivative to number fields, it makes sense to study whether the generalizations are $\mathfrak{p}$-adically continuous or not. We state our results in two theorems, one for the local case and one for the global case.

\begin{theorem} \label{theorem:mainlocal}
Suppose $K$ is a number field. Let $\mathfrak{p}$ be a prime ideal of $\mathcal{O}_K$. Then the arithmetic partial derivative $\dKp$ is $\mathfrak{p}$-adically continuous at every point in $K$. Moreover $\dKp$ is strictly differentiable and twice strictly differentiable (with respect to the ultrametic $|\cdot|_{\ordfpp}$) at every nonzero point in $K$ but $\dKp$ is not strictly differentiable (with respect to the ultrametic $|\cdot|_{\ordfpp}$) at $0$.
\end{theorem}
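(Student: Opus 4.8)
The plan is to reduce everything to the structural formula $\dKp(x)=x\cdot\ldKp(x)$ together with the fact, built into the construction of the partial derivative, that $\ldKp(x)=c\,\ordfpp(x)$ for a fixed nonzero rational constant $c$; thus $\dKp(x)=c\,x\,\ordfpp(x)$ and $\dKp(0)=0$. The engine of the whole argument is the ultrametric identity: if $\ordfpp(x-a)>\ordfpp(a)$ then $\ordfpp(x)=\ordfpp(a)$. Consequently, on the open ball $B_a=\{x:\ordfpp(x-a)>\ordfpp(a)\}$ around any $a\neq 0$ the valuation is constantly $m:=\ordfpp(a)$, and $\dKp$ restricts to the linear map $x\mapsto cm\,x$. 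Every assertion at a nonzero point is then immediate: continuity follows from $\dKp(x)-\dKp(a)=cm(x-a)$, whence $|\dKp(x)-\dKp(a)|_{\ordfpp}=|cm|_{\ordfpp}\,|x-a|_{\ordfpp}$; and since on $B_a$ the first difference quotient $\Phi_1\dKp$ equals the constant $cm=c\,\ordfpp(a)$ while the second difference quotient $\Phi_2\dKp$ vanishes identically, both extend continuously to the diagonal. By the standard difference-quotient criterion for strict differentiability (Schikhof), $\dKp$ is strictly and twice strictly differentiable at $a$, with $\dKp'(a)=c\,\ordfpp(a)$.

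For continuity at $0$ I would argue directly that $\ordfpp(x)\to+\infty$ forces $\ordfpp(\dKp(x))\to+\infty$. Here
\[\ordfpp(\dKp(x))=\ordfpp(c)+\ordfpp(x)+\ordfpp\big(\ordfpp(x)\big),\]
and writing $\ordfpp(x)=m/e$ with $e$ the ramification index of $\mathfrak{p}$ over $p$ and $m\in\mathbb{Z}$, the iterated term equals $\nu_p(m)-\nu_p(e)\geq -\nu_p(e)$, a fixed lower bound. As $\ordfpp(x)\to+\infty$ the middle term dominates, so $\ordfpp(\dKp(x))\to+\infty$ and $\dKp$ is continuous at $0$.

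The crux is the failure of strict differentiability at $0$, which I would detect by restricting the first difference quotient to $y=0$: for $x\neq 0$ one has $\Phi_1\dKp(x,0)=\dKp(x)/x=c\,\ordfpp(x)$. I would then exhibit two sequences tending to $0$ inside $\mathbb{Q}\subseteq K$. Choosing $x_k$ with $\ordfpp(x_k)=p^k$ (say $x_k=p^{p^k}$) gives $\Phi_1\dKp(x_k,0)=cp^k$, of valuation $\ordfpp(c)+k\to+\infty$, so these quotients tend to $0$. Choosing $y_k$ with $\ordfpp(y_k)=m_k$ for integers $m_k\to\infty$ coprime to $p$ gives $\Phi_1\dKp(y_k,0)=cm_k$ of constant nonzero absolute value $|c|_{\ordfpp}$. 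Were $\dKp$ strictly differentiable at $0$, both nets would have to converge to the common value $\dKp'(0)$, forcing at once $\dKp'(0)=0$ and $|\dKp'(0)|_{\ordfpp}=|c|_{\ordfpp}\neq 0$ --- a contradiction. Hence $\dKp$ is not strictly differentiable at $0$, and a fortiori not twice strictly differentiable there.

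I expect the only real obstacle to lie at $0$, and it is a genuine tension rather than a computation: the iterated valuation $\ordfpp(\ordfpp(x))$ is bounded below --- which is precisely what rescues continuity --- yet oscillates $\mathfrak{p}$-adically as $\ordfpp(x)\to+\infty$ --- which is precisely what destroys strict differentiability. Pinning this down requires care with the normalization constant $c$ and with the value group $\tfrac1e\mathbb{Z}$ of $\ordfpp$; by contrast, all the nonzero-point statements collapse to the single observation that $\dKp$ is locally linear.
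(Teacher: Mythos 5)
Your proposal is correct, and at every nonzero point it is essentially the paper's argument: the paper likewise exploits that $\ordfpp$ is locally constant (it writes the perturbation multiplicatively as $x_i x$ with $\ordfpp(x_i)=0$, and in Theorem \ref{theorem:local_diff} uses $\nu_{\mathfrak{P}}(u_i)=\nu_{\mathfrak{P}}(v_i)=\nu_{\mathfrak{P}}(x)$ for $i\gg 0$), so that $\dKp$ becomes linear near $x$ and the first and second difference quotients are the constants $\dKp(x)/x$ and $0$. Your continuity argument at $0$ is also the paper's (Theorem \ref{theorem:cont_0}): the lower bound $\ordfpp(\ordfpp(x))\geq -\nu_p(e)$ is exactly the term $-\nu_{\mathfrak{P}}([L:\mathbb{Q}])-\nu_{\mathfrak{P}}(\prod q)$ in their estimate. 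Where you genuinely diverge is the failure of strict differentiability at $0$. The paper (Theorem \ref{theorem:not_diff}) takes $u_i=p^{i+1}$, $v_i=p^i$ and shows the single sequence $\Phi\dKp(u_i,v_i)$, which involves $\nu_p((i+1)p-i)$, oscillates with $i \bmod p$; you instead restrict the difference quotient to pairs $(x,0)$, where it equals the logarithmic derivative $c\,\ordfpp(x)$, and play off a sequence with $\ordfpp(x_k)=p^k$ (quotients $\to 0$) against one with $\ordfpp(y_k)$ prime to $p$ (quotients of constant nonzero absolute value). Both are valid; yours is arguably cleaner because it isolates the obstruction as the $p$-adic oscillation of $\ordfpp(x)$ itself, it avoids the paper's case analysis on $p\mid i$ (which, incidentally, contains a typo: both branches read ``if $p\mid i$''), and by phrasing everything through the identity $\dKp(x)=c\,x\,\ordfpp(x)$ with a fixed rational $c$ you sidestep the Galois/non-Galois reduction the paper performs separately in each theorem. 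The one point to make explicit if you write this up is that for non-Galois $K$ the constant is $c=g(\mathfrak{p},L)/(p\,g(p,L))$ rather than $1/(p\,g(p,K))$, but it is still a fixed nonzero rational, which is all your argument uses.
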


See Theorems \ref{theorem:cont_partial}, \ref{theorem:local_diff}, and \ref{theorem:local_not_diff_0} for a proof of Theorem \ref{theorem:mainlocal}. The same result is true for arithmetic partial derivative over $p$-adic fields.

\begin{theorem} \label{theorem:mainglobal}
Suppose $K$ is a number field. Let $\mathfrak{p}$ be a prime ideal and $T$ be a nonempty subset of prime ideals of $\mathcal{O}_K$.
\begin{enumerate}
    \item The arithmetic subderivative $D_{K,T}$ is $\mathfrak{p}$-adically continuous but not strictly differentiable (with respect to the ultrametic $|\cdot|_{\ordfpp}$) at $0$.
    \item If $T \neq \{\mathfrak{p}\}$, then the arithmetic subderivative $D_{K,T}$ is $\mathfrak{p}$-adically discontinuous at every nonzero point in $K$.
\end{enumerate}
\end{theorem}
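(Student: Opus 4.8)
The plan is to pass from $D_{K,T}$ to its logarithmic version. Writing $\mathrm{ld}_{K,T}(x) := D_{K,T}(x)/x$ for $x \in K^{\times}$, the definition of Section~\ref{section:numberfields} exhibits $\mathrm{ld}_{K,T}\colon K^{\times} \to \mathbb{Q}$ as a group homomorphism of the shape
\[
\mathrm{ld}_{K,T}(x) = \sum_{\mathfrak{q} \in T} c_{\mathfrak{q}}\,\ordfqq(x), \qquad c_{\mathfrak{q}} \in \mathbb{Q}_{>0}.
\]
The one structural fact I will use repeatedly is that the denominators of the $c_{\mathfrak{q}}$ divide $q\,[K:\mathbb{Q}]$ (where $\mathfrak{q} \mid q$), so that $\ordfpp(c_{\mathfrak{q}}) \geq -C$ for a constant $C=C(K,p)$ independent of $\mathfrak{q}$. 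Since $D_{K,T}(x) - D_{K,T}(a) = x\bigl(\mathrm{ld}_{K,T}(x) - \mathrm{ld}_{K,T}(a)\bigr) + (x-a)\,\mathrm{ld}_{K,T}(a)$ and multiplication by a fixed nonzero element is a $\mathfrak{p}$-adic homeomorphism, continuity (resp. discontinuity) of $D_{K,T}$ at a nonzero $a$ is equivalent to that of $\mathrm{ld}_{K,T}$ there; and because $\mathrm{ld}_{K,T}$ is a homomorphism, this reduces to whether $\mathrm{ld}_{K,T}(s) \to 0$ $\mathfrak{p}$-adically as $s \to 1$. This collapses everything to a single point.

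For part (1) I would first settle continuity at $0$. If $x \to 0$ then $\ordfpp(x) \to +\infty$, while each nonzero term satisfies $\ordfpp\bigl(c_{\mathfrak{q}}\ordfqq(x)\bigr) \geq -C + \ordfpp(\ordfqq(x)) \geq -C$, because $\ordfqq(x) \in \mathbb{Z}$ forces $\ordfpp(\ordfqq(x)) \geq 0$; as the sum is finite this gives $\ordfpp(\mathrm{ld}_{K,T}(x)) \geq -C$ uniformly, whence $\ordfpp(D_{K,T}(x)) = \ordfpp(x) + \ordfpp(\mathrm{ld}_{K,T}(x)) \geq \ordfpp(x) - C \to +\infty$ and $D_{K,T}(x) \to 0 = D_{K,T}(0)$. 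For non-differentiability at $0$ it suffices, since strict differentiability at a point implies ordinary differentiability there, to disprove the latter by exhibiting two sequences $x_m \to 0$ along which the difference quotient $D_{K,T}(x_m)/x_m = \mathrm{ld}_{K,T}(x_m)$ has different limits. Taking $\pi$ with $(\pi) = \mathfrak{p}^h$ and $x_m = \pi^m$, one gets $\mathrm{ld}_{K,T}(\pi^m) = c_{\mathfrak{p}} h m$ (or $0$ if $\mathfrak{p} \notin T$); along $m = p^k$ this tends $\mathfrak{p}$-adically to $0$, whereas along $m$ coprime to $p$ (or, if $\mathfrak{p}\notin T$, along the single-surviving-term sequence built for part (2)) its $\ordfpp$ stays bounded, so it is bounded away from $0$. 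Hence no derivative exists at $0$.

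The heart of the theorem is part (2). The mechanism is that $s \to 1$ at $\mathfrak{p}$ forces $\ordfpp(s) = 0$, annihilating the $\mathfrak{p}$-summand of $\mathrm{ld}_{K,T}(s)$, yet imposes \emph{no} constraint on $\ordfqq(s)$ for any $\mathfrak{q} \neq \mathfrak{p}$; when $T \neq \{\mathfrak{p}\}$ we fix such a $\mathfrak{q}\in T$ and try to keep exactly one non-vanishing term alive. The clean prototype, valid whenever $T$ lies above infinitely many rational primes, is to take rational primes $r_m \equiv 1 \pmod{p^m}$ (Dirichlet) having a prime of $T$ above them: then $r_m \to 1$ $\mathfrak{p}$-adically while $\mathrm{ld}_{K,T}(r_m)$ is a positive rational with denominator dividing $r_m[K:\mathbb{Q}]$ and $r_m \neq p$, so $\ordfpp(\mathrm{ld}_{K,T}(r_m))$ stays bounded and $\mathrm{ld}_{K,T}(r_m) \not\to 0$. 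For the remaining (thin) $T$ I would instead choose $\gamma$ with $(\gamma) = \mathfrak{q}^h$, so that $\gamma$ is supported only at $\mathfrak{q}$ and $\mathrm{ld}_{K,T}(\gamma) = c_{\mathfrak{q}} h \neq 0$ with no other contribution, and then push $\gamma$ $\mathfrak{p}$-adically toward $1$ by a factor supported at primes \emph{outside} $T$ (whose existence I justify via weak approximation and Chebotarev), obtaining $s_m \to 1$ with $\mathrm{ld}_{K,T}(s_m) = c_{\mathfrak{q}} h$ constant. In either regime $s_m \to 1$ while $\mathrm{ld}_{K,T}(s_m) \to L_0 \neq 0$, so $D_{K,T}(a s_m) \to a(\mathrm{ld}_{K,T}(a) + L_0) \neq D_{K,T}(a)$ for every nonzero $a$, giving discontinuity everywhere.

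The main obstacle is precisely this construction for an \emph{arbitrary}, possibly infinite, $T$: I must manufacture a global element $\mathfrak{p}$-adically close to $1$ whose logarithmic derivative retains a single uncancelled term, and the difficulty is controlling the contributions of the (a priori uncontrolled) other primes entering the support of an approximating element. The resolution I anticipate is to isolate one prime $\mathfrak{q}$ and force the tail to be $\mathfrak{p}$-adically negligible, either by absorbing the principality obstruction into an auxiliary prime outside $T$ and invoking a Chebotarev-type existence statement for the required ray class, or, when $T$ is cofinite, by falling back on the Dirichlet-prime prototype. The bounded-denominator property of the $c_{\mathfrak{q}}$ is what guarantees that the surviving term $L_0$ has bounded $\ordfpp$ and therefore cannot be cancelled $\mathfrak{p}$-adically by whatever tail remains.
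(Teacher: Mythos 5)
Your part (1) is essentially right. Continuity at $0$ via the uniform lower bound $\ordfpp(\ld_{K,T}(x))\ge -C$ is the same mechanism as the paper's Theorem \ref{theorem:cont_0} (only note that $\ordfqq(x)$ lies in $\frac{1}{e(q,K)}\mathbb{Z}$, not $\mathbb{Z}$, which costs a bounded amount and is harmless). For non-differentiability at $0$ you take a genuinely different and simpler route: you refute \emph{ordinary} differentiability by producing two sequences $x_m\to 0$ along which $\ld_{K,T}(x_m)$ has different limits, whereas the paper's Theorem \ref{theorem:not_diff} shows the two-variable quotient $\Phi D_{K,T}(p^{i+1},p^{i})$ (or $\Phi D_{K,T}((pq)^{i+1},(pq)^{i})$) has $\mathfrak{p}$-adic valuation that jumps by $1$ according to whether $p\mid i$. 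Both work; yours only needs that strict differentiability forces the existence of the ordinary derivative. Your reduction of part (2) to showing $\ld_{K,T}(s)\not\to 0$ for some $s\to 1$ is also the correct and standard multiplicativity step.

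The gap is in the construction for part (2): your two regimes do not cover all $T$, and each of your existence claims can fail. Take $K=\mathbb{Q}$, $p=2$, $\mathfrak{p}=(2)$, and $T=\{(q): q \text{ odd},\ q\not\equiv 1 \pmod{2^{100}}\}$. This $T$ lies above infinitely many rational primes, so by your stated criterion the Dirichlet prototype should apply; but for $m\ge 100$ every prime $\equiv 1\pmod{2^m}$ is $\equiv 1\pmod{2^{100}}$ and hence \emph{not} below $T$, so no sequence of primes of $T$ tends to $1$ $2$-adically. The fallback also fails: an element supported outside $T$ is $\pm 2^{a}$ times a product of primes $\equiv 1\pmod{2^{100}}$, hence is $\equiv \pm 1 \pmod{2^{100}}$ once you force its $2$-adic valuation to vanish, and $3^{-1}\not\equiv\pm1\pmod{2^{100}}$, so no $u_m$ with $3u_m\equiv 1\pmod{2^m}$ exists. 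More generally, when $T$ is infinite, Chebotarev guarantees primes in every ray class but not primes \emph{outside $T$} in every ray class, so the auxiliary-prime step is unjustified. The paper's Theorem \ref{theorem:discontinuous} closes exactly this hole with the sequence $x_i=q_0^{p^M}x/q_i$, $q_i=q_0^{p^M}+n_ip^i$ prime: the exponent $p^M$, with $M>\ordp(r_{q_i})$ for every conceivable $q_i$ and $q_0$ chosen to minimize $\ordp(g(q,K))$ over $\mathbb{P}_T$, forces the $q_0$-contribution to $\ld$ to have strictly larger valuation than any possible $q_i$-contribution, so by the ultrametric inequality the two cannot cancel and $\ordfpp(D_{K,T}(x)-D_{K,T}(x_i))$ stays bounded whether or not $q_i$ lands in $\mathbb{P}_T$. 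Your closing remark that bounded denominators prevent $\mathfrak{p}$-adic cancellation is not enough — the uncontrolled term has denominators bounded in exactly the same way, so without such a separating device the two terms could have equal valuation and cancel. (Your plan does correctly recover the remaining case where all of $T$ lies above $p$: there $T$ is automatically finite and a CRT argument as in the paper's Theorem \ref{theorem:discontinuous_special} suffices.)
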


See Theorems \ref{theorem:cont_0}, \ref{theorem:not_diff}, \ref{theorem:discontinuous}, and \ref{theorem:discontinuous_special} for a proof of Theorem \ref{theorem:mainglobal}. By letting $K = \mathbb{Q}$ and $\mathfrak{p} = (p)$ in Theorem \ref{theorem:mainglobal}, we are able to give answers to all the open questions in \cite[Section 7]{HMT3}.

\section{$p$-adic Fields} \label{section:p-adic}

\subsection{Definition}

Fix a rational prime $p$. Let $\mathbb{Q}_p$ be the field of $p$-adic rational numbers and $\ordp$ the $p$-adic valuation. We denote the $p$-adic absolute value by $|\cdot|_{\ordp}$. Recall that the arithmetic partial derivative (with respect to $p$) $D_{\mathbb{Q},p} : \mathbb{Q} \to \mathbb{Q}$ is defined by
\begin{displaymath}
    D_{\mathbb{Q},p}(x) := \begin{cases}
        x\ordp(x)/p, & \text{if $x \neq 0$};\\
        0, & \text{if $x=0$.}
        \end{cases}
\end{displaymath}
One can extend $D_{\mathbb{Q},p}$ to $D_{\mathbb{Q}_p,p}$ with the same formula because $\ordp$ is well-defined on $\mathbb{Q}_p$. We can further extend $D_{\mathbb{Q}_p,p}$ to $p$-adic fields because $\ordp$ can be uniquely extended to a discrete valuation over $p$-adic fields. Let $K$ be a finite extension of $\mathbb{Q}_p$ of degree $n = [K:\mathbb{Q}_p]$. Let $\mathcal{O}_K$ be the ring of integers, which is a discrete valuation ring with maximal ideal $\mathfrak{p}$ and residue field $\mathcal{O}_K/\mathfrak{p}$. Let $f = f(K|\mathbb{Q}_p) = [\mathcal{O}_K/\mathfrak{p}:\mathbb{F}_p]$ be the inertia degree and $e = e(K|\mathbb{Q}_p)$ the ramification index, that is, the unique integer such that $p\mathcal{O}_K = \mathfrak{p}^e$. We have $n = e f$. It is well known \cite[Chapter 2 Proposition 3]{localfields} that $K$ is again complete with respect to the $\mathfrak{p}$-adic topology. There exists a unique discrete valuation $\ordfpp: K \to \mathbb{Q} \cup \{+\infty\}$ that extends $\ordp$ defined by 
\[\ordfpp(x) := \dfrac{1}{n}\ordp(N_{K/\mathbb{Q}_p}(x)),\]
where $N_{K/\mathbb{Q}_p}: K \to \mathbb{Q}_p$ is the norm. We know that $\ordfpp(K) = \mathbb{Z}/e$. For every $x \in K$, we set $k = k(x) :=\ordfpp(\ordfpp(x))$, so $k \geq -\ordp(e)$. The discrete valuation $\ordfpp$ defines a unique absolute value on $K$, which will be denoted by $|\cdot|_{\ordfpp}$, that extends the $p$-adic absolute value on $\mathbb{Q}_p$:
\[|x|_{\ordfpp} = \sqrt[\leftroot{-5}\uproot{10}n]{\Big|N_{K/\mathbb{Q}_p}(x)\Big|_{\ordp}}.\]
We can extend $D_{\mathbb{Q}_p,p}$ to $\dKp: K \to K$ as follows:
\begin{displaymath}
    \dKp(x) := \begin{cases}
        x\ordfpp(x)/p, & \text{if $x \neq 0$};\\
        0, & \text{if $x=0$.}
        \end{cases}
\end{displaymath}
One can check that $\dKp$ satisfies the Leibniz rule. It is evident that $\dKp(x) = D_{\mathbb{Q}_p,p}(x)$ for all $x \in \mathbb{Q}_p$. Note that the definition of $\dKp$ is independent of the choice of uniformizers of $\mathcal{O}_K$.

Let $K$ and $K'$ be two finite extensions over $\mathbb{Q}_p$ such that $x \in K \cap K' =: K''$. Let $\ordfpp$, $\nu_{\mathfrak{p}'}$, $\nu_{\mathfrak{p}''}$ be the unique discrete valuations that extend $\ordp$ to $K$, $K'$, and $K''$ respectively. Clearly $\ordfpp|_{K''} = \nu_{\mathfrak{p}'}|_{K''} = \nu_{\mathfrak{p}''}$. Therefore we have $\dKp(x) = x\ordfpp(x)/p = x\nu_{\mathfrak{p}''}(x)/p = x \nu_{\mathfrak{p}'}(x)/p = D_{K', \mathfrak{p}'}(x) \in K \cap K'$. This implies that the definition of arithmetic partial derivative of $x$ is independent of the choice of finite extensions where $x$ lies. 

\begin{remark}
Let $q$ be another prime different from $p$. The $q$-adic valuation $\nu_q$ defined on $\mathbb{Q}$ does not extend to $\mathbb{Q}_p$ or finite extensions of $\mathbb{Q}_p$. Therefore, unlike the case of $\mathbb{Q}$ where we have one arithmetic partial derivative for each prime number, there is only one well-defined arithmetic partial derivative for $\mathbb{Q}_p$ and for finite extensions of $\mathbb{Q}_p$.
\end{remark}

\subsection{Periodicity of $\ordfpp$ sequence}
Let $K/\mathbb{Q}_p$ be a finite extension and let $x \in K$. Let $\mathfrak{p}$ be the maximal ideal of $\mathcal{O}_K$ and $\ordfpp$ the unique discrete valuation that extends $\nu_p$. We call the following sequence
\[\ordfpp(x),\; \ordfpp(\dKp(x)),\; \ordfpp(\dKp^2(x)),\; \ldots\]
the $\ordfpp$ sequence of $x$. Note that the $\ordfpp$ sequence of $x$ is independent of the choice of $K$. If $\ordfpp(\dKp^j(x)) = +\infty$ for some integer $j \geq 0$, then $\dKp^j(x) = 0$ and thus $\dKp^i(x) = 0$ for all $i \geq j$. If $\ordfpp(\dKp^i(x)) < +\infty$ for all $i \geq 0$, then we call the sequence of increments of consecutive terms 
\[\ordfpp(\dKp(x)) - \ordfpp(x), \ordfpp(\dKp^2(x)) - \ordfpp(\dKp(x)), \ordfpp(\dKp^3(x)) - \ordfpp(\dKp^2(x)), \ldots\]
the $\incfp$ sequence of $x$. Suppose $\ordfpp(x) = bp^k$ where $\ordp(b) = 0$ and $k \geq -\ordp(e)$. Then the increment is
\begin{equation} \label{equation:ordp_inc}
\ordfpp(\dKp(x)) - \ordfpp(x) = \ordfpp(\frac{\ordfpp(x)}{p}) = \ordfpp(bp^{k-1}) = k-1 = \ordfpp(\ordfpp(x))-1. 
\end{equation}

\begin{lemma} \label{lemma:eventuallyinfinity}
The following two statements are equivalent:
\begin{enumerate}
    \item The $\ordfpp$ sequence of $x$ is eventually $+\infty$.
    \item $\ordfpp(x) \in \{0, 1, 2, \ldots, p-1, +\infty\}$.
\end{enumerate}
\end{lemma}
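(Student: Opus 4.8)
The plan is to reduce the statement to a question about reachability of $0$ in the integer-valued dynamics that governs the $\ordfpp$ sequence, and then to trap the orbit in regions that avoid $0$. The first observation is that the $\ordfpp$ sequence is eventually $+\infty$ if and only if one of its terms equals $0$: if $\ordfpp(\dKp^{j}(x)) = 0$ then $\dKp^{j+1}(x) = \dKp^{j}(x)\cdot 0/p = 0$ and every later term is $+\infty$, while conversely the earliest $+\infty$ term (other than the degenerate case $x = 0$, i.e.\ $\ordfpp(x) = +\infty$) must be immediately preceded by a term equal to $0$. Hence, for finite nonzero $x$, I only need to decide whether the orbit of $v_0 := \ordfpp(x)$ under the map $v \mapsto v + \ordp(v) - 1$ --- this increment is exactly the one recorded in \eqref{equation:ordp_inc} --- ever reaches $0$. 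Since each increment $\ordp(v) - 1$ is an integer, all terms of the $\ordfpp$ sequence lie in the coset $v_0 + \mathbb{Z}$; in particular, if $v_0 \notin \mathbb{Z}$ the orbit never meets $\{0, 1, \ldots, p-1\} \subset \mathbb{Z}$, so I may assume $v_0 \in \mathbb{Z}$.

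For the direction $(2) \Rightarrow (1)$ the cases $\ordfpp(x) \in \{0, +\infty\}$ follow directly from the definition of $\dKp$. If $\ordfpp(x) = m$ with $1 \le m \le p-1$, then $\ordp(m) = 0$, so by \eqref{equation:ordp_inc} the increment is $-1$; since each intermediate value again lies in $\{1, \ldots, p-1\}$ and therefore has valuation $0$, a one-line induction shows the sequence is $m, m-1, \ldots, 1, 0$ and hence $+\infty$ from then on.

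The substantive direction is $(1) \Rightarrow (2)$, which I will establish in contrapositive form: if $v_0 \in \mathbb{Z}$ and $v_0 \notin \{0, 1, \ldots, p-1\}$, then the orbit never reaches $0$. The key step is to show that the two ``outer'' regions $\{v \in \mathbb{Z} : v \ge p\}$ and $\{v \in \mathbb{Z} : v < 0\}$ are each forward invariant under $v \mapsto v + \ordp(v) - 1$. Writing $k = \ordp(v) \ge 0$: for $v \ge p$, the case $k \ge 1$ gives $v + k - 1 \ge v \ge p$, while $k = 0$ forces $p \nmid v$ and hence $v \ge p+1$, so $v - 1 \ge p$; for $v < 0$, divisibility gives $v \le -p^{k}$, so $v + k - 1 \le -p^{k} + k - 1 < 0$. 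Since $v_0$ lies in one of these two regions, every later term stays in the same region, so $0$ is never attained and the sequence is not eventually $+\infty$.

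I expect the crux to be the forward invariance of the negative region when $k = \ordp(v) \ge 1$: here the correction $k - 1$ pushes the value upward, and one must see that it cannot overcome the magnitude of $v$. The point is the elementary exponential estimate $p^{k} > k - 1$ for all $k \ge 1$, combined with the bound $|v| \ge p^{k}$ coming from $p^{k} \mid v$; together these keep $v + k - 1$ negative. The remaining work is routine bookkeeping of the boundary cases ($v = p$ versus $v = p+1$, and $v = -1$), and needs no input beyond \eqref{equation:ordp_inc} and the basic properties of $\ordp$ on $\mathbb{Z}$.
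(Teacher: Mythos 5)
Your proposal is correct and follows essentially the same route as the paper: both reduce "eventually $+\infty$" to "some term of the $\ordfpp$ sequence equals $0$," handle the non-integral case by noting the increments are integers, and then show the regions $\{v \ge p\}$ and $\{v < 0\}$ are forward invariant using exactly the estimates $v + k - 1 \ge v$ (resp.\ $v \ge p+1$ when $k=0$) and $p^{k} > k-1$. The only difference is presentational --- you phrase the case analysis as forward invariance of two regions, which the paper carries out as an induction within each of its Cases 2 and 3.
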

\begin{proof}
Suppose $\ordfpp(x) \in \{0, 1, 2, \ldots, p-1, + \infty\}.$  If $\ordfpp(x) = + \infty$, then $x = 0$, and $\dKp(x) =0$ for all $n \geq 0$.  If $\ordfpp(x) = 0$, then $x$ is a unit in $\mathcal{O}_K$, and thus $\dKp^n(x) = 0$ for all $n \geq 1$.  If $\ordfpp(x) = j$ for some $1 \leq j \leq p-1$, then $\ordfpp(\dKp^i(x)) = j -i$ for $1 \leq i \leq j$.  From $\ordfpp(\dKp^i(x)) = 0$ we get $\dKp^i(x)$ is a unit in $\mathcal{O}_K$, and thus $\dKp^n(x) = 0$ for all $n > j$.

Now we show that if $\ordfpp(x) \not \in \{ 0, 1, 2, \ldots, p-1, +\infty\}$, then the $\ordfpp$ sequence of $x$ is not eventually $+\infty$. It suffices to show that $\ordfpp(\dKp^i(x)) \neq 0$ for all $i \geq 0$. We consider three mutually disjoint cases.

\begin{itemize}
    \item[Case 1.] Suppose $\ordfpp(x) \not \in \mathbb{Z}$. Then $\ordfpp(\dKp(x)) \not \in  \mathbb{Z}$ by \eqref{equation:ordp_inc}. By induction, we get $\ordfpp(\dKp^i(x)) \not \in \mathbb{Z}$ since $\ordfpp(\ordfpp(\dKp^{i-1}(x)))-1 \in \mathbb{Z}$. In particular, $\ordfpp(\dKp^i(x)) \neq 0$.
    \item[Case 2.] Suppose $\ordfpp(x) \geq p$ is an integer. If $p  \nmid \ordfpp(x)$, then $\ordfpp(x) > p$ and $k = 0$, and so $\ordfpp( \dKp(x)) = \ordfpp(x) - 1 \geq p$. If $p \mid \ordfpp(x)$, then $k \geq 1$, and thus $\ordfpp( \dKp(x)) \geq \ordfpp(x) \geq p$ by \eqref{equation:ordp_inc}. Therefore $\ordfpp(\dKp(x)) \geq p > 0$. By induction, we get $\ordfpp(\dKp^i(x)) \neq 0$.
    \item[Case 3.] Suppose $\ordfpp(x) = bp^k < 0$ is an integer. Since $| bp^k | \geq p^k > k - 1,$ we get $\ordfpp( \dKp(x)) = bp^k + (k-1) < 0$. By induction, we get $\ordfpp(\dKp^i(x)) \neq 0$.
\end{itemize}
Combining all three cases, we have proved that if $\ordfpp(x) \notin \{ 0, 1, 2, \ldots, p-1, +\infty\}$, then the $\ordfpp$ sequence of $x$ is not eventually $+\infty$.
\end{proof}

\begin{remark}
Ufnarovski and {\AA}hlander conjecture \cite[Conjecture 8]{Ufnarovski} that there exists an infinite sequence $a_n$ of different natural numbers such that $a_1 = 1$ and $D_{\mathbb{Q}}(a_n) = a_{n-1}$ for $n \geq 2$. Here $D_{\mathbb{Q}}$ is the arithmetic derivative (not arithmetic partial derivative) defined on $\mathbb{Q}$. The same question can be asked for $\dKp$. Suppose there exists an infinite sequence $a_n \in K$ such that $a_1 = 1$ and $\dKp(a_n) = a_{n-1}$ for $n \geq 2$. Let $N = p+1$ and we know that the $\ordfpp$ sequence of $a_N$ is eventually $+\infty$ because $\ordfpp(\dKp^N(a_N)) = \ordfpp(\dKp(a_1)) = \ordfpp(0) = +\infty$. By the proof of Lemma \ref{lemma:eventuallyinfinity}, we know that $\ordfpp(a_2) = 1, \ordfpp(a_3) = 2, \ldots, \ordfpp(a_{N-1}) = p-1$, and there does not exist $a_N$ such that $\dKp(a_N) = a_{N-1}$. Hence the conjecture is false over $K$ for arithmetic partial derivative. On a related note, if we let $a_1 \in K \backslash \mathcal{O}_K^{\times}$ for some finite extension $K/\mathbb{Q}_p$, then it is possible to find an infinite sequence $a_n \in K$ such that $\dKp(a_n) = a_{n-1}$ for all $n \geq 2$. For example, let $K = \mathbb{Q}$, $a_1 = p^{p^2}$, and for all $m \geq 1$, let $a_{2m} = p^{p^2+1}/(p^2+1)^m$ and $a_{2m+1} = p^{p^2}/(p^2+1)^m$. It is easy to check that $D_{\mathbb{Q},p}(a_{2m+1}) = a_{2m}$ and $D_{\mathbb{Q},p}(a_{2m}) = a_{2m-1}$.  
\end{remark}

The next proposition tells us if $\ordfpp(\ordfpp(x)) < 0$, then the $\incp$ sequence of $x$ is constant and negative. As a result of that, the $\ordfpp$ sequence of $x$ converges to $-\infty$.

\begin{proposition} \label{prop:vp-infty}
Let $x \in K$ be a nonzero element such that $\ordfpp(x) = bp^{k}$ with $\ordp(b) = 0$ and $k < 0$.  Then the $\incfp$ sequence of $x$ is a constant sequence with negative terms
\[ (k-1, k-1, k-1, \ldots ). \]
As a result, the $\ordfpp$ sequence of $x$ converges to $-\infty$.
\end{proposition}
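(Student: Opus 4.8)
The plan is to track the $\ordfpp$-values along the orbit of $x$ and show a single invariant: the $p$-adic valuation of each such value stays equal to $k$. Write $v_i := \ordfpp(\dKp^i(x))$, so that $v_0 = \ordfpp(x) = bp^k$. As long as $\dKp^i(x) \neq 0$, equation \eqref{equation:ordp_inc} tells us that the $i$-th term of the $\incfp$ sequence equals $\ordfpp(v_i) - 1$, and since $v_i$ is a rational number we have $\ordfpp(v_i) = \ordp(v_i)$. Thus the whole proposition reduces to computing $\ordp(v_i)$ for every $i$.

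The main step is the claim that $\ordp(v_i) = k$ for all $i \geq 0$, which I would establish by induction. The base case is immediate, since $\ordp(v_0) = \ordp(bp^k) = k$ (using $\ordp(b) = 0$). For the inductive step, suppose $v_i = bp^k + m_i$ with $m_i \in \mathbb{Z}$. By the inductive hypothesis $\ordp(v_i) = k$, so the increment $\ordp(v_i) - 1 = k - 1$ is again an integer, and hence $v_{i+1} = v_i + (k-1) = bp^k + m_{i+1}$ with $m_{i+1} = m_i + k - 1 \in \mathbb{Z}$. The crucial observation is the ultrametric one: because $k < 0$ while $\ordp(m_{i+1}) \geq 0$ (with the convention $\ordp(0) = +\infty$), the two summands $bp^k$ and $m_{i+1}$ have distinct valuations, so $\ordp(v_{i+1}) = \min\{k, \ordp(m_{i+1})\} = k$. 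This closes the induction, and as a bonus it shows that every $v_i$ is finite, i.e.\ $\dKp^i(x) \neq 0$ for all $i$, so that the $\incfp$ sequence is genuinely defined at every step.

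With $\ordp(v_i) = k$ for all $i$, equation \eqref{equation:ordp_inc} forces each increment to equal $k - 1$, so the $\incfp$ sequence is the constant sequence $(k-1, k-1, \ldots)$; its terms are negative since $k < 0$ gives $k - 1 \leq -2$. Telescoping then yields $v_i = bp^k + i(k-1)$, an arithmetic progression with negative common difference, whence $\ordfpp(\dKp^i(x)) = v_i \to -\infty$ as $i \to \infty$, which is the asserted convergence.

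I expect the only genuine content to be the ultrametric invariance argument: once one sees that adding integer increments to a summand $bp^k$ of negative $p$-adic valuation can never disturb that valuation, everything else is bookkeeping. There is no real obstacle here; the one point worth flagging is to confirm that the orbit never hits $0$ (equivalently, that the $\ordfpp$ sequence never reaches $+\infty$), and the induction above supplies this for free.
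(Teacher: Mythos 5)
Your proof is correct and follows essentially the same route as the paper: both arguments show by induction that the $p$-adic valuation of $\ordfpp(\dKp^i(x))$ remains equal to $k$ because adding the integer increment $k-1$ to a quantity of negative valuation $k$ cannot change that valuation (the paper phrases this by factoring $bp^k+(k-1)=p^k(b+(k-1)p^{-k})$ with $\ordp(b+(k-1)p^{-k})=0$, you phrase it via the ultrametric equality for summands of distinct valuations). Your explicit remark that this also guarantees the orbit never reaches $0$ is a nice touch that the paper leaves implicit.
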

\begin{proof}
Equation \eqref{equation:ordp_inc} implies that the first term of the $\incfp$ sequence of $x$ is indeed $k-1$. Since 
\[\ordfpp(x) + (k-1) = bp^{k} +(k-1) = p^{k}(b + (k-1)p^{-k})\]
where $\ordp(b + (k-1)p^{-k}) = 0$, we can write $\ordfpp(\dKp(x)) = b'p^{k}$ where $b' := b+(k-1)p^{-k}$ with $\ordp(b') = 0$. Since $\ordfpp(\ordfpp(\dKp(x))) = \ordfpp(\ordfpp(x))$, we see that the second term of the $\incfp$ sequence of $x$ is again $k-1$. In the meantime, we can write  $\ordfpp(\dKp^2(x)) = b''p^{k}$ for some $b'' := b'+(k-1)p^{-k}$ where $\ordp(b'') = 0$. By induction, we see that every term of the  $\incfp$ sequence of $x$ is equal to $k-1$. Therefore $\ordfpp(\dKp^n(x)) = \ordfpp(x) + n(k-1) \rightarrow -\infty$ as $n \rightarrow \infty$.  
\end{proof}

If the $\ordfpp$ sequence of $x$ is eventually $+\infty$, then it is periodic of period $1$. For the rest of this subsection, we assume that the $\ordfpp$ sequence of $x$ is not eventually $+\infty$ and $\ordfpp(\ordfpp(x)) > 0$. We will show that under these conditions, the $\ordfpp$ sequence of $x$ is eventually periodic of period $\leq p$. The next proposition gives a recipe of the initial terms of the $\incfp$ sequence of $x$ if $\ordfpp(\ordfpp(x)) > 0$.

\begin{proposition} \label{prop:segment}
    Let $x \in K$ be a nonzero element such that $\ordfpp(x) = bp^{k}$ with $\ordp(b) = 0$ and $k > 0$.  Denote $k' := (k-1 \bmod p) + 1 \leq p$. The first $k'$ terms of the $\incfp$ sequence of $x$ are
    \[(k-1, \underbrace{-1, -1, \ldots, -1}_{(k-1 \bmod p) \text{ copies}}).\]
\end{proposition}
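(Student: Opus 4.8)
The plan is to track the valuations $v_j := \ordfpp(\dKp^j(x))$ directly. The key tool is the increment formula \eqref{equation:ordp_inc}, which says that at each step
\[ v_j - v_{j-1} = \ordfpp(\ordfpp(\dKp^{j-1}(x))) - 1 = \ordp(v_{j-1}) - 1, \]
where I use that $v_{j-1} \in \mathbb{Q}$ (it lies in the value group $\frac{1}{e}\mathbb{Z}$, and the increments are integers), so $\ordfpp(v_{j-1}) = \ordp(v_{j-1})$. Since $v_0 = bp^k$ with $\ordp(b) = 0$ and $k > 0$ an integer, the first increment is $\ordp(v_0) - 1 = k - 1$, which is the first term of the $\incfp$ sequence. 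It then remains to show that the next $(k-1 \bmod p)$ increments are each $-1$, i.e. that $\ordp(v_j)$ collapses to $0$ over the appropriate range of $j$.

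First I would set $r := (k-1) \bmod p$, so $0 \leq r \leq p-1$ and $k' = r+1$. If $r = 0$ there is nothing to prove beyond the first term, so I assume $r \geq 1$. I would then prove by induction on $j$, for $1 \leq j \leq r$, the two assertions
\[ v_j = bp^k + (k-j), \qquad \ordp(v_j) = 0. \]
Granting these up to index $j-1$ forces every intermediate increment to equal $\ordp(v_{j-1}) - 1 = -1$, so that $v_j = v_{j-1} - 1 = bp^k + (k-j)$; the substantive content of each step is therefore the valuation computation $\ordp(v_j) = 0$.

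That valuation computation is the heart of the argument and rests on a single congruence. Since $k \equiv r+1 \pmod p$, we have $k - j \equiv r + 1 - j \pmod p$, and for $1 \leq j \leq r$ the residue $r+1-j$ runs through $\{1, 2, \ldots, r\} \subseteq \{1, \ldots, p-1\}$, none of which vanishes modulo $p$. Hence $\ordp(k-j) = 0$ for every such $j$ (this also rules out $k-j = 0$). Because $\ordp(bp^k) = k \geq 1 > 0 = \ordp(k-j)$, the strict inequality of valuations together with the ultrametric property gives $\ordp(v_j) = \min(k, 0) = 0$, completing the induction. Reading off the increments, term $1$ equals $k-1$ and terms $2$ through $r+1$ each equal $\ordp(v_j) - 1 = -1$ for $1 \leq j \leq r$, i.e. exactly $r = (k-1 \bmod p)$ copies of $-1$, which is precisely the claimed initial segment.

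The main obstacle I anticipate is bookkeeping rather than depth: one must confirm that the constant part $k-j$ never becomes divisible by $p$ across the entire range $1 \leq j \leq r$, so that the valuation stays pinned at $0$ and the increments stay at $-1$. This is exactly what the congruence $k \equiv r+1 \pmod p$ guarantees, and the only places requiring genuine care are the boundary index $j = r$ (where $r+1-j = 1$ is comfortably nonzero mod $p$) and the trivial case $r = 0$ handled separately at the outset.
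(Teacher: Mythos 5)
Your proof is correct and follows essentially the same route as the paper's: both compute $\ordfpp(\dKp^j(x)) = bp^k + (k-j)$ and use the congruence $k-j \not\equiv 0 \pmod p$ over the relevant range of $j$ to pin the valuation at $0$ and force each subsequent increment to be $-1$. Your write-up merely makes the induction and the non-divisibility bookkeeping fully explicit where the paper leaves them to an ``induction on $k'$.''
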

\begin{proof}
The first term of the $\incfp$ sequence of $x$ is indeed $k-1$ by \eqref{equation:ordp_inc}. We have
\[\ordfpp(\dKp(x)) = bp^{k} + (k-1).\]
If $k'=1$, then there is nothing further to prove. If $k'=2$, we have $k \equiv 2 \pmod p$ and thus $p \nmid (bp^k + (k-1))$. By \eqref{equation:ordp_inc} again, we get the second term of the $\incfp$ sequence of $x$ is
   \[\ordfpp(\dKp^2(x)) - \ordfpp(\dKp(x)) = - 1\]
and $\ordfpp(\dKp^2(x)) = bp^k + (k-2)$. The proof is complete by induction on $k'$.
\end{proof}

\begin{corollary} \label{corollary:basecase}
Let $x \in K$ be a nonzero element such that $\ordfpp(x) = bp^{k}$ with $\ordp(b) = 0$ and $1 \leq k \leq p$. Then the $\ordfpp$ sequence and the $\incfp$ sequence of $x$ are periodic of period $k$. 
\end{corollary}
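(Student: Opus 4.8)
The plan is to reduce everything to the initial segment already computed in Proposition \ref{prop:segment}, combined with the observation that the $\ordfpp$ sequence is governed by a single deterministic recurrence. Write $v_n := \ordfpp(\dKp^n(x))$, so that $v_0 = bp^k$. The key structural remark is that, by \eqref{equation:ordp_inc}, the increment satisfies $v_{n+1} - v_n = \ordp(v_n) - 1$ and hence depends only on $v_n$; in other words the map $v \mapsto v + \ordp(v) - 1$ advances the sequence one step, so the entire $\ordfpp$ sequence is determined by $v_0$ alone. Consequently, to obtain period $k$ it suffices to show $v_k = v_0$: once this holds, applying the advancing map $k$ further times forces $v_{k+n} = v_n$ for all $n$, and the $\incfp$ sequence, whose $n$-th term is $\ordp(v_n) - 1$, inherits the same period.

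First I would observe that the hypothesis $1 \le k \le p$ gives $k - 1 \bmod p = k - 1$, so the quantity $k'$ of Proposition \ref{prop:segment} equals $k$. That proposition then supplies the first $k$ terms of the $\incfp$ sequence explicitly as $(k-1, -1, \ldots, -1)$ with exactly $k-1$ copies of $-1$. Summing these increments yields
\[ v_k - v_0 = (k-1) + (k-1)\cdot(-1) = 0, \]
so $v_k = v_0 = bp^k$, which is precisely what the determinism argument requires.

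Before concluding I would verify that no term of the sequence equals $+\infty$, that is, that no $\dKp^i(x)$ vanishes, since otherwise the advancing map is undefined and the argument stalls. The intermediate valuations produced by Proposition \ref{prop:segment} are $v_i = bp^k + (k-i)$ for $1 \le i \le k-1$, and each is nonzero: a vanishing $v_i$ would force $\ordp(k-i) = k \ge 1$, which is impossible since $1 \le k - i \le k - 1 \le p - 1$. Thus the sequence genuinely cycles through the finite values $v_0, v_1, \ldots, v_{k-1}$ and returns to $v_0$, establishing period $k$ for both the $\ordfpp$ and $\incfp$ sequences. To see that $k$ is in fact the minimal period, I would note that the offsets $0, k-1, k-2, \ldots, 1$ of $v_0, v_1, \ldots, v_{k-1}$ are pairwise distinct, so no smaller period can occur. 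The only delicate point, and the step I would state most carefully, is the passage from the finitely many computed increments to full periodicity; this rests entirely on the determinism encoded in \eqref{equation:ordp_inc}, so I would formulate that recurrence cleanly at the outset and confirm the non-vanishing of the intermediate terms, after which the periodicity is immediate.
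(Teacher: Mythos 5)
Your proposal is correct and follows essentially the same route as the paper: apply Proposition \ref{prop:segment} with $k' = k$ to get the first $k$ increments $(k-1, -1, \ldots, -1)$, observe that the $\ordfpp$ sequence returns to $bp^k$ after $k$ steps, and conclude periodicity from the fact that each term determines the next via \eqref{equation:ordp_inc}. Your explicit verification that the intermediate terms $bp^k + (k-i)$ are nonzero and pairwise distinct (hence that $k$ is the minimal period) makes precise what the paper leaves as ``it is now clear.''
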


\begin{proof}
If $1 \leq k \leq p$, then $k' = (k-1 \bmod p) + 1 = k-1+1= k$. The first $k+1$ terms of the $\ordfpp$ sequence are
\[(bp^{k}, bp^k+(k-1), bp^k+(k-2), \ldots, bp^k+1, bp^{k}).\] 
It is now clear that the $\ordfpp$ sequence and the $\incfp$ sequence of $x$ are periodic of period $k$.
\end{proof}

We will see later that the periodicity predicted by Corollary \ref{corollary:basecase} will eventually happen as part of the $\ordfpp$ sequence of $x$ for all nonzero $x \in K$ as long as $\ordfpp(\ordfpp(x)) \geq 0$ and the $\ordfpp$ sequence of $x$ is not eventually $+\infty$.

\begin{definition}
    For any integer $k \geq 1$, we call the following sequence
    \[\mathcal{S}_{k,p} := (k-1, \underbrace{-1, -1, \ldots, -1}_{(k-1 \bmod p) \text{ copies}})\]
    the \emph{$k$-segment} (with respect to $p$).
\end{definition}

We define a sequence of integers $\kappa_0, \kappa_1, \kappa_2, \ldots$ recursively from $\ordfpp(x)$ that will allow us to predict the period of the $\ordfpp$ sequence of $x$. Let $\kappa_0 := \ordfpp(x) \bmod p$ and $\kappa_1 := \ordp(\pfloor{\kappa_0})$. Here $\pfloor{x} := x - (x \bmod p)$. For $i \geq 2$, we define
\begin{equation} \label{eqn:steps}  
\kappa_{i} := \begin{cases}
\nu_{p}(\pfloor{\kappa_{i-1}-1}), & \text{if $\kappa_{i-1} < +\infty$};\\
+\infty, & \text{if $\kappa_{i-1} = +\infty$}.
\end{cases}
\end{equation}

It is clear that if $1 \leq \kappa_i \leq p$, then $\kappa_{i+1} = +\infty$; if $p+1 \leq \kappa_i < +\infty$, then $\kappa_{i+1} < \log_p(\kappa_i)$. If the $\ordfpp$ sequence of $x$ is not eventually $+\infty$, then there exists a unique positive integer $N = N(k)$ such that $1 \leq \kappa_N \leq p$, and $\kappa_i = +\infty$ for all $i > N$. 

\begin{theorem} \label{theorem:main1}
Let $x \in K$ be a nonzero element such that $\ordfpp(x) = bp^{k}$ with $\ordp(b) = 0$ and $k \geq 0$.    If the $\ordfpp$ sequence of $x$ is not eventually $+\infty$, then the $\incfp$ sequence of $x$ is of the form 
\[(\underbrace{-1, -1, \ldots, -1}_{\kappa_0 \text{ copies}}, \mathcal{S}_{\kappa_1, p}, \mathcal{S}_{\kappa_2,p}, \mathcal{S}_{\kappa_3,p}, \ldots, \mathcal{S}_{\kappa_N,p}, \mathcal{S}_{\kappa_N,p}, \mathcal{S}_{\kappa_N,p}, \ldots).\]
As a result, the $\ordfpp$ sequence and the $\incfp$ sequence of $x$ are eventually periodic of period $\kappa_{N}$.
\end{theorem}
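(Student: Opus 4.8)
The plan is to decompose the $\incfp$ sequence into an initial block of $-1$'s followed by consecutive $k$-segments, and to track the value of the $\ordfpp$ sequence at the start of each segment. Write $v_j := \ordfpp(\dKp^j(x))$ for the terms of the $\ordfpp$ sequence; by \eqref{equation:ordp_inc} the $j$-th increment equals $\ordp(v_j) - 1$, and since every increment is an integer and $v_0 = bp^{k}$ is a $p$-adic integer (as $k \geq 0$), all $v_j$ lie in $\mathbb{Z}_{(p)}$, so $\ordp(v_j)$ and $v_j \bmod p$ are well defined.

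First I would dispose of the initial $-1$'s. If $k \geq 1$ then $\kappa_0 = \ordfpp(x) \bmod p = 0$, and the first increment $k-1 \geq 0$ already begins a segment. If $k = 0$, then $v_0 = b$ is a $p$-adic unit with $v_0 \equiv \kappa_0 \pmod p$ and $1 \leq \kappa_0 \leq p-1$; subtracting $1$ repeatedly keeps the value a unit exactly while its residue runs through $\kappa_0, \kappa_0 - 1, \ldots, 1$, so the first $\kappa_0$ increments are all $-1$ and we arrive at $v_{\kappa_0} = \pfloor{\ordfpp(x)}$. In either case we land on a value $u_1 := \pfloor{\ordfpp(x)}$ with $\ordp(u_1) = \kappa_1 \geq 1$; the finiteness of $\kappa_1$ is precisely the hypothesis that the sequence is not eventually $+\infty$, since by Lemma \ref{lemma:eventuallyinfinity} that hypothesis rules out $\ordfpp(x) \in \{0, 1, \ldots, p-1\}$, i.e. $\pfloor{\ordfpp(x)} \neq 0$.

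The core is an induction on $i \geq 1$ establishing that the $\incfp$ sequence continues with the blocks $\mathcal{S}_{\kappa_1,p}, \mathcal{S}_{\kappa_2,p}, \ldots$, where the $i$-th segment starts at a value $u_i$ with $\ordp(u_i) = \kappa_i$. Given such a $u_i$, Proposition \ref{prop:segment} (applied with its ``$k$'' equal to $\kappa_i$) shows the next block of increments is exactly $\mathcal{S}_{\kappa_i,p}$ and that the segment ends at $u_{i+1} = u_i + \pfloor{\kappa_i - 1}$. When $\kappa_i > p$ we have $\pfloor{\kappa_i - 1} \neq 0$, and since $p^{\ordp(\pfloor{\kappa_i-1})} \leq \pfloor{\kappa_i - 1} \leq \kappa_i - 1 < \kappa_i$, its valuation is strictly smaller than $\kappa_i = \ordp(u_i)$; hence $\ordp(u_{i+1}) = \ordp(\pfloor{\kappa_i-1}) = \kappa_{i+1}$, which is exactly the recursion \eqref{eqn:steps}. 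This advances the induction and simultaneously shows $\kappa_{i+1} < \log_p \kappa_i$, so the positive integers $\kappa_i$ strictly decrease as long as they exceed $p$ and must reach the range $1 \leq \kappa_N \leq p$ after finitely many steps, which defines $N$.

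Finally, once $\kappa_N \leq p$ we have $\kappa_N - 1 < p$, so $\pfloor{\kappa_N - 1} = 0$ and $u_{N+1} = u_N$: the dynamics returns to the start of segment $N$, so the block $\mathcal{S}_{\kappa_N,p}$ repeats verbatim forever. By Corollary \ref{corollary:basecase} this tail is periodic of period $\kappa_N$, and since $\mathcal{S}_{\kappa_N,p}$ has length $\kappa_N$ (because $(\kappa_N - 1 \bmod p) = \kappa_N - 1$), both the $\ordfpp$ and $\incfp$ sequences are eventually periodic of period $\kappa_N$, giving the displayed form. The point requiring the most care, and the main obstacle, is the valuation computation in the inductive step: one must verify that $\ordp(u_i + \pfloor{\kappa_i - 1})$ is governed by the strictly smaller of the two summand valuations, which is what makes the recursion for $\kappa_{i+1}$ correct and at the same time forces the termination that produces the eventual periodicity.
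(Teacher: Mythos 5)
Your proof is correct and follows essentially the same route as the paper's: peel off the initial block of $-1$'s, then induct with Proposition \ref{prop:segment} while tracking the valuation of the value at the start of each segment, and close with Corollary \ref{corollary:basecase} once $\kappa_N \leq p$. Your explicit justification that $\ordp\bigl(u_i + \pfloor{\kappa_i - 1}\bigr) = \kappa_{i+1}$ (because $\ordp(\pfloor{\kappa_i-1}) < \kappa_i = \ordp(u_i)$) spells out a step the paper leaves implicit, but the argument is the same.
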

\begin{proof}
For $0 \leq i \leq \kappa_0$, we have $\ordfpp(\dKp^i(x)) = b-i = \ordfpp(x) - i$. Hence the first $\kappa_0$ terms of the $\incfp$ sequence of $x$ are
\[(\underbrace{-1, -1, \ldots, -1}_{\kappa_0 \text{ copies}}).\]
We can write $\ordfpp(\dKp^{\kappa_0}(x)) = b_0p^{\kappa_1}$ with $\kappa_1 \geq 1$. By Proposition \ref{prop:segment}, we know that the next $\kappa_1' := (\kappa_1-1 \bmod p)+1$ term of the $\incfp$ sequence is the $\kappa_1$-segment
\[\mathcal{S}_{\kappa_1,p} = (\kappa_1 - 1, \underbrace{-1, -1, \ldots, -1}_{(\kappa_1-1) \bmod p \text{ copies}}).\]
Furthermore, we get $\ordfpp(\dKp^{\kappa_0+i}(x)) = bp^{\kappa_1} + (\kappa_1 - i)$ for $0 \leq i \leq \kappa_1'$. As $\kappa_1 - \kappa_1'  = \pfloor{\kappa_1 -1}$ and $\kappa_2 = \ordp(\pfloor{\kappa_1-1})$, we can write $\ordfpp(\dKp^{\kappa_0+\kappa_1'+1}(x)) = b_1p^{\kappa_2}$. If $\kappa_2 \geq 1$, by Proposition \ref{prop:segment} again, we know that the next $\kappa_2' := (\kappa_2-1 \bmod p)+1$ term of the $\incfp$ sequence is the $\kappa_2$-segment. Let $N = N(k)$ be the unique positive integer such that $1 \leq k_N \leq p$. By induction, we know that the initial terms of the $\incfp$ sequence of $x$ is of the form
\[(\underbrace{-1, -1, \ldots, -1}_{\kappa_0 \text{ copies}}, \mathcal{S}_{\kappa_1,p}, \mathcal{S}_{\kappa_2,p}, \mathcal{S}_{\kappa_3,p}, \ldots, \mathcal{S}_{\kappa_N,p}).\]
Corollary \ref{corollary:basecase} implies that if $b_{N-1}p^{\kappa_N}$ is a term in the $\ordfpp$ sequence of $x$, then $\mathcal{S}_{k_N,p}$ will appear repeatedly in the $\incfp$ sequence of $x$. 
\end{proof}

\subsection{Anti-partial derivatives}

We fix a finite extension $K/\mathbb{Q}_p$ in this subsection. Note that not all elements in $K$ have an anti-partial derivative. For example, suppose $x \in K$ is an anti-partial derivative of $p^{p-1} \in K$, then $\dKp^{p+1}(x) = 0$ and thus the $\ordfpp$ sequence of $x$ is eventually $+\infty$. By Lemma \ref{lemma:eventuallyinfinity}, $\ordfpp(x) \in \{0, 1, 2, \dots, p-1, +\infty\}$, but that is not possible as $\ordfpp(\dKp(x)) = p-1$. Therefore $p^{p-1}$ does not have anti-partial derivative in $K$. Given an element $y \in K$, if $y$ has an anti-partial derivative in $K$, we want to know how many there are. We start with $y=0$. Let $x \in K$ such that 
\[\dKp(x) = \frac{x\ordfpp(x)}{p} = 0.\]
Then $x\ordfpp(x)= 0 $ which implies that $x = 0$ or $\ordfpp(x) = 0$. Hence the anti-partial derivative of $0$ in $K$ is
\[\{x \in K \, : \, \ordfpp(x) = 0\} \, \cup \, \{0\}.\]

\begin{lemma} \label{lemma:antidev}
For every $0 \neq y \in K$, if there exists $x \in K$ such that $\dKp(x) = y$, then $x \in \mathbb{Q}_p(y)$.
\end{lemma}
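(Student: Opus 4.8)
The plan is to solve explicitly for $x$ in terms of $y$ and to observe that the scalar relating them is automatically rational. Starting from the hypothesis $\dKp(x) = y$, I would first unwind the definition of $\dKp$. Since $y \neq 0$, we have $\dKp(x) = x\ordfpp(x)/p \neq 0$, and this forces both $x \neq 0$ and $\ordfpp(x) \neq 0$. Set $m := \ordfpp(x)$, so that the defining equation reads $y = xm/p$.

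The crucial point is that $m$ is a nonzero \emph{rational} number. Indeed, the excerpt records that $\ordfpp(K) = \mathbb{Z}/e$, by which one means the set $\{n/e : n \in \mathbb{Z}\} \subseteq \mathbb{Q}$; hence every value of $\ordfpp$ on a nonzero element of $K$ is a rational number. Consequently the scalar $p/m$ lies in $\mathbb{Q} \subseteq \mathbb{Q}_p$.

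Finally I would rearrange $y = xm/p$ into $x = (p/m)\,y$. Since $p/m \in \mathbb{Q}_p$ and $\mathbb{Q}_p(y)$ is a field containing both $\mathbb{Q}_p$ and $y$, it contains the product $(p/m)\,y = x$. This gives $x \in \mathbb{Q}_p(y)$, as desired.

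There is no serious obstacle in this argument: the entire content is the observation that the multiplicative factor $\ordfpp(x)$ relating $x$ to its arithmetic partial derivative is a valuation value, and is therefore automatically rational and lands in the base field $\mathbb{Q}_p$. The only steps requiring genuine care are the non-vanishing deductions — that $y \neq 0$ forces both $x \neq 0$ and $\ordfpp(x) \neq 0$ — since these are exactly what legitimizes dividing by $m$ when we solve for $x$.
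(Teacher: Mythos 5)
Your proposal is correct and is essentially the paper's own argument: the paper's one-line proof observes that $\ordfpp(x)/p \in \mathbb{Q}$ and concludes $x \in \mathbb{Q}_p(y)$, which is exactly your rearrangement $x = (p/\ordfpp(x))\,y$. Your additional care in checking $\ordfpp(x) \neq 0$ (so the division is legitimate) is a detail the paper leaves implicit, but it is the same route.
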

\begin{proof}
Since $\dKp(x) = x\ordfpp(x)/p = y$ and $\ordfpp(x)/p \in \mathbb{Q}$, we know that $x \in \mathbb{Q}_p(y)$.
\end{proof}

Let $x_1, x_2 \in K$ with $\dKp(x_1) = \dKp(x_2)$. If $\ordfpp(x_1) = 0$, then $\dKp(x_1) = 0 = \dKp(x_2)$. Thus $\ordfpp(x_2) = 0$. Hence $\ordfpp(x_1) = 0$ if and only if $\ordfpp(x_2) = 0$.

Suppose $\ordfpp(x_1), \ordfpp(x_2) \neq 0$. Let $\ordfpp(x_1) = b_1p^{k_1}$ and $\ordfpp(x_2) = b_2p^{k_2}$ where $\ordp(b_1b_2) = 0$. We get 
\begin{equation} \label{eq:bk4}
    b_1p^{k_1} - b_2p^{k_2} = k_2-k_1.
\end{equation}

Suppose $k_1 = k_2$, then \eqref{eq:bk4} implies that $\ordfpp(x_1) = \ordfpp(x_2)$. Hence
\[x_1 = \dfrac{\dKp(x_1)p}{\ordfpp(x_1)} = \dfrac{\dKp(x_2)p}{\ordfpp(x_2)} = x_2.\]
This means that $x_1 = x_2$ if and only if $k_1 = k_2$.

If $k_1 \neq k_2$, without loss of generality, we assume $k_1 < k_2$. Suppose $k_1 < 0$, then \eqref{eq:bk4} implies that
\[b_1 - b_2p^{k_2 - k_1} = p^{-k_1}(k_2-k_1).\]
This is a contradiction because $\ordfpp(b_1 - b_2p^{k_2-k_1}) = 0$ and $\ordfpp(p^{-k_1}(k_2-k_1)) \geq -k_1 > 0$. Hence if $k_1 < 0$, then $\dKp(x_1)$ has exactly one anti-partial derivative.

Suppose $k_1 > 0$. There is an element $x_0 \in K$ in the set of all anti-partial derivatives of $\dKp(x_1)$ with the smallest possible $k_0$. We call $x_0$ the \emph{primitive} anti-partial derivative of $\dKp(x_1)$. Equation \eqref{eq:bk4} implies that

\begin{align}
    b_0p^{k_0} - bp^{k_1}  &= k_1 - k_0, \label{eq:bk6}
\end{align}
As $x_0$ is primitive, we have $k_0 \leq k_1$ and \eqref{eq:bk6} implies that $p^{k_0}(b_0-bp^{k_1-k_0}) = k_1-k_0$. Let $k_1-k_0=p^{k_0}c$ for some $c \in \mathbb{Z}_{\geq 0}$. Then $b_0-bp^{p^{k_0}c}= c$. So $b = \dfrac{b_0-c}{p^{p^{k_0}c}}$ and $\ordp(b_0-c) = p^{k_0}c$ since $\ordp(b)=0$. Let
\[C(x_0) := \Big\{c \in \mathbb{Z}_{\geq 0} \, :  \, \ordp(b_0-c) = p^{k_0}c \Big\}.\]
It is easy to see that $C(x_0)$ is finite because as $c \gg 0$, $\ordp(b_0-c) < p^{k_0}c$.

\begin{theorem} \label{theorem:primitive}
With the above notations, suppose $x_0$ is the primitive anti-partial derivative of $\dKp(x_0)$. Let $\ordfpp(x_0) = b_0p^{k_0}$ with $\ordp(b_0) = 0$ and $k_0 > 0$. There is a one-to-one correspondence between $C(x_0)$ and the set of all anti-partial derivatives of $\dKp(x_0)$. Furthermore, suppose we fix a uniformizer $\pi \in \mathfrak{p} \subset \mathcal{O}_K$ and let $e$ be the ramification index of $K/\mathbb{Q}_p$, we can write $x_0 = \alpha_0\pi^{eb_0p^{k_0}}$ and $p = \alpha_p \pi^e$ with $\alpha_0, \alpha_p \in \mathcal{O}_K^{\times}$. If $x = \alpha\pi^{ebp^{k}}$ is an anti-partial derivative of $\dKp(x_0)$ such that $\ordp(b) = 0$ and $\alpha \in \mathcal{O}_K^{\times}$, then there exists a unique $c \in C(x_0)$ such that
\[k = p^{k_0}c+k_0 \in \mathbb{Z}_{\geq 0}, \quad b = \frac{b_0-c}{p^{k-k_0}} = \frac{b_0-c}{p^{p^{k_0}c}}, \quad \alpha = \dfrac{\alpha_0b_0}{b}\alpha_p^{k_0-k} \in \mathcal{O}_K^{\times}.\] 
\end{theorem}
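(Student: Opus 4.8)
The plan is to show that the two maps between $C(x_0)$ and the set of anti-partial derivatives of $y := \dKp(x_0)$ are mutually inverse, and then to extract the explicit $\pi$-adic formulas by direct substitution. Since $x_0$ is primitive with $k_0 > 0$ we have $y \neq 0$, so every anti-partial derivative $x$ of $y$ satisfies $\dKp(x) = y \neq 0$; in particular $\ordfpp(x) \neq 0$ and we may write $\ordfpp(x) = bp^{k}$ with $\ordp(b) = 0$ as in the statement. The observation I would use throughout is that such an $x$ is recovered from its valuation alone: from $\dKp(x) = x\ordfpp(x)/p = y$ we get $x = yp/\ordfpp(x)$. Hence an anti-partial derivative of $y$ is determined by the single datum $\ordfpp(x) = bp^{k}$, and the whole problem reduces to describing which values $bp^{k}$ occur.

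For one direction I would invoke the computation already carried out before the statement: equation \eqref{eq:bk6}, namely $b_0p^{k_0} - bp^{k} = k - k_0$, together with primitivity ($k_0 \leq k$) forces $p^{k_0} \mid (k - k_0)$, so $k = p^{k_0}c + k_0$ for a unique $c \in \mathbb{Z}_{\geq 0}$, and then $b = (b_0 - c)/p^{p^{k_0}c}$ with $\ordp(b) = 0$ translating exactly into $\ordp(b_0 - c) = p^{k_0}c$, i.e.\ $c \in C(x_0)$. This gives a well-defined map $x \mapsto c$; it is injective because distinct anti-partial derivatives have distinct $k$ (equal $k$ forces equal $\ordfpp$ by \eqref{eq:bk4}, hence equal $x$ by $x = yp/\ordfpp(x)$, as noted before the statement). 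For surjectivity I would run the argument in reverse: given $c \in C(x_0)$, set $k := p^{k_0}c + k_0$ and $b := (b_0 - c)/p^{p^{k_0}c}$, so that $\ordp(b) = 0$ and a one-line check gives $bp^{k} + k = b_0p^{k_0} + k_0$; since $\ordfpp(y) = b_0p^{k_0} + k_0 - 1$, the element $x := yp/(bp^{k})$ has $\ordfpp(x) = bp^{k}$ and therefore $\dKp(x) = x \cdot bp^{k}/p = y$. Thus every $c \in C(x_0)$ is realized and the two maps are inverse bijections. The only point requiring care is precisely this last verification: that matching the valuation forces $x$ to be a genuine anti-partial derivative (not merely a formal solution of \eqref{eq:bk6}); this is where the identity $x = yp/\ordfpp(x)$ does the real work, and it is the one step I would not treat as routine.

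For the explicit formulas I would first rewrite $y$ and $x$ in terms of $x_0$. Since $\ordfpp(x_0) = b_0p^{k_0}$, we have $y = \dKp(x_0) = x_0b_0p^{k_0}/p = x_0b_0p^{k_0-1}$, and hence from $x = yp/(bp^{k})$,
\[x = x_0\,\frac{b_0}{b}\,p^{\,k_0 - k}.\]
Substituting $x_0 = \alpha_0\pi^{eb_0p^{k_0}}$ and $p = \alpha_p\pi^{e}$ gives
\[x = \frac{\alpha_0 b_0}{b}\,\alpha_p^{\,k_0-k}\,\pi^{\,e(b_0p^{k_0} + k_0 - k)},\]
and the exponent equals $ebp^{k}$ by \eqref{eq:bk6}, matching $x = \alpha\pi^{ebp^{k}}$; comparing the unit parts then yields $\alpha = (\alpha_0b_0/b)\alpha_p^{k_0-k}$, with the values of $k$ and $b$ already recorded above. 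Finally I would check $\alpha \in \mathcal{O}_K^{\times}$: we have $\alpha_0,\alpha_p \in \mathcal{O}_K^{\times}$ by construction, $\alpha_p^{k_0-k}$ is a unit even when $k_0 - k < 0$, and $b_0/b \in \mathbb{Q}^{\times}$ has $\ordp(b_0/b) = 0$, hence $\ordfpp(b_0/b) = 0$, so the product is a unit. Since the whole argument is an assembly of \eqref{eq:bk4}, \eqref{eq:bk6}, and the definition of $C(x_0)$, I expect no serious obstacle beyond the bookkeeping and the realization check flagged above.
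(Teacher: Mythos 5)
Your proposal is correct and follows essentially the same route as the paper: both directions of the correspondence rest on the relation $b_0p^{k_0} - bp^{k} = k - k_0$ together with the identity $bp^{k} + k = b_0p^{k_0} + k_0$, which is precisely what the paper's proof verifies. Your use of $x = yp/\ordfpp(x)$ to realize each $c \in C(x_0)$ as a genuine anti-partial derivative is a mild streamlining of the paper's direct computation of $\dKp(\alpha\pi^{ebp^{k}})$ in terms of the uniformizer, but the bijection, the realization check, and the derivation of the explicit formulas for $k$, $b$, and $\alpha$ all coincide with the paper's argument.
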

\begin{proof}
We show that every anti-partial derivative $x$ of $\dKp(x_0)$ is associated with a unique $c \in C(x_0)$. If $x = x_0$, then we associate $x$ with $c=0$. Suppose $x \neq x_0$. Let $\ordfpp(x) = bp^{k}$. Since $x_0$ is the primitive anti-partial derivative and $\ordfpp(x_0) \neq 0$, we know that $b \neq 0$ and $k > k_0$. Then $p^{k_0}(b_0 - bp^{k-k_0}) = k-k_0$ and thus $\ordp(k-k_0) = k_0$. Let $k-k_0 = p^{k_0}c$ where $c > 0$ and $\ordp(c) = 0$. By plugging $k-k_0=p^{k_0}c$ into $p^{k_0}(b_0 - bp^{k-k_0}) = k-k_0$, we get $b_0 - bp^{k-k_0} = c$. Since $\ordp(b) = 0$, we know that $\ordp(b_0-c) = p^{k_0}c$.

Then we show that for each $c \in C(x_0)$, we can define a unique $x=x(c)$ such that $\dKp(x) = \dKp(x_0)$. Since $\ordp(b_0-c) = p^{k_0}c$, there exists $b \in \mathbb{Q}$ with $\ordp(b) = 0$ such that $b_0-c=bp^{p^{k_0}c}$. Set $k := p^{k_0}c+k_0$. We can compute
\begin{gather*}
bp^{k}+k-1 = \frac{b_0-c}{p^{k-k_0}} p^{k} + k -1 = (b_0-c)p^{k_0} + p^{k_0}c+k_0-1 \\
= (b_0-c)p^{k_0} + p^{k_0}c+k_0-1 = b_0p^{k_0} + k_0 -1.
\end{gather*}
Set $x:= \alpha \pi^{ebp^k}$ where $\alpha = \alpha_0b_0\alpha_p^{k_0-k}/b$. We have
\begin{gather*}
\dKp(x) = \dfrac{x\ordfpp(x)}{p} = \dfrac{\alpha\pi^{ebp^{k}}ebp^{k}}{p} = \alpha be\pi^{ebp^{k}}p^{k-1} = \alpha be\alpha_p^{k-1}\pi^{e(bp^{k}+k-1)} \\
= \alpha_0b_0e\alpha_p^{k_0-1}\pi^{e(b_0p^{k_0}+k_0-1)} = \dfrac{\alpha_0b_0e}{p}\pi^{eb_0p^{k_0}}p^{k_0} = \dfrac{x_0\ordfpp(x_0)}{p} = \dKp(x_0). \qedhere
\end{gather*}
\end{proof}

\begin{corollary}
For any nonzero $y\in K$, the set $\{x \in K : \dKp(x) = y\}$ is finite (possibly empty).
\end{corollary}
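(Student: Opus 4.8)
The plan is to reduce the counting of anti-partial derivatives of a fixed nonzero $y$ to the counting of the exponents $k$ that can arise, and then invoke the finiteness already established in Theorem \ref{theorem:primitive}. If $y$ has no anti-partial derivative, the set is empty and there is nothing to prove, so assume $A := \{x \in K : \dKp(x) = y\}$ is nonempty. By Lemma \ref{lemma:antidev}, every $x \in A$ lies in the fixed finite extension $\mathbb{Q}_p(y)$, and since $y \neq 0$ each such $x$ satisfies $\ordfpp(x) \neq 0$; write $\ordfpp(x) = b p^{k}$ with $\ordp(b) = 0$.

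First I would record the key injectivity: the discussion preceding \eqref{eq:bk4} shows that if $x_1, x_2 \in A$ have the same exponent $k_1 = k_2$, then $\ordfpp(x_1) = \ordfpp(x_2)$ and hence $x_1 = x_2$. Thus the map $A \to \mathbb{Z}$, $x \mapsto k$, is injective, and it suffices to prove that only finitely many exponents $k$ occur. Since $k = \ordfpp(\ordfpp(x)) \geq -\ordp(e)$ for every $x$, the exponents are bounded below, so there is a primitive anti-partial derivative $x_0 \in A$ realizing the smallest exponent $k_0$; write $\ordfpp(x_0) = b_0 p^{k_0}$.

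Now I would split into cases according to the sign of $k_0$. If $k_0 < 0$, then the computation following \eqref{eq:bk4} shows that positing a second exponent $k_2 > k_0$ yields the contradiction $\ordp(b_0 - b_2 p^{k_2 - k_0}) = 0$ against $\ordp(p^{-k_0}(k_2 - k_0)) > 0$; hence no larger exponent occurs, $A = \{x_0\}$ is a singleton, and we are done. If $k_0 > 0$, Theorem \ref{theorem:primitive} furnishes an explicit one-to-one correspondence between $A$ and the set $C(x_0) = \{c \in \mathbb{Z}_{\geq 0} : \ordp(b_0 - c) = p^{k_0} c\}$, which was already shown to be finite (for $c \gg 0$ one has $\ordp(b_0 - c) < p^{k_0} c$). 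The boundary case $k_0 = 0$ is handled by the same correspondence: the argument of Theorem \ref{theorem:primitive} carries over, and $C(x_0) = \{c \geq 0 : \ordp(b_0 - c) = c\}$ is finite for the identical reason.

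The main point is that essentially all of the substance has already been supplied: the finiteness of $C(x_0)$ in Theorem \ref{theorem:primitive} and the uniqueness in the negative-exponent case are the real content, and the corollary is just their assembly. The only step requiring genuine care is making the case analysis exhaustive, in particular confirming that the boundary value $k_0 = 0$, which the statement of Theorem \ref{theorem:primitive} formally excludes, is still covered; I would verify this by checking that the bijection argument of that theorem does not actually use $k_0 > 0$ beyond the finiteness of $C(x_0)$, which persists when $k_0 = 0$.
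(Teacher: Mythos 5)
Your proof is correct and follows essentially the same route as the paper, which presents the corollary as an immediate consequence of the discussion preceding Theorem \ref{theorem:primitive}: the injectivity of $x \mapsto k$ on the fiber, uniqueness when the minimal exponent $k_0$ is negative, and the finiteness of $C(x_0)$ otherwise. Your explicit handling of the boundary case $k_0 = 0$, which the paper's case analysis ($k_1<0$ versus $k_1>0$) silently skips even though it genuinely occurs (e.g.\ $D_{\mathbb{Q}_2,2}(8)=D_{\mathbb{Q}_2,2}(12)=12$, with exponents $0$ and $1$), is a worthwhile extra degree of care.
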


For the rest of this subsection, we will prove Conjecture \ref{conjecture:ufnarovski} for partial derivatives over any finite extension $K/\mathbb{Q}_p$. We will show that for each positive integer $n$, there exists infinitely many $x \in \mathbb{Q}_p$ such that $D_{\mathbb{Q}_p,p}(x)$ has exactly $n$ anti-partial derivatives in $\mathbb{Q}_p$. By Lemma \ref{lemma:antidev}, we know that all anti-partial derivatives of $D_{\mathbb{Q}_p,p}(x)$ must be in $\mathbb{Q}_p$ and thus $D_{\mathbb{Q}_p,p}(x)$ has exactly $n$ anti-partial derivatives in any finite extension $K/\mathbb{Q}_p$. The first lemma gives us a way to construct $k_0 \in \mathbb{Z}_{> 0}$ such that if $\ordfpp(\ordfpp(x_0)) = k_0$, then $x_0$ is the primitive anti-partial derivative of $\dKp(x_0)$.

\begin{lemma} \label{lemma:define_k}
For every integer $m \geq 2$, let $k_0 = k_0(m) := p + p^2  + \cdots + p^m$. For every $x_0 \in \mathbb{Q}_p$, if $\ordfpp(\ordfpp(x_0)) = k_0$, then $x_0$ is the primitive anti-partial derivative of $D_{\mathbb{Q}_p,p}(x_0)$.
\end{lemma}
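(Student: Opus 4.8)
The plan is to strip the statement down to a purely arithmetic assertion about the single integer $k_0$ and then dispatch that assertion by a case analysis on the $p$-adic valuation. Since $x_0\in\mathbb{Q}_p$ and every quantity below is an integer, $\ordfpp$ agrees with $\ordp$, so I write $\ordp$ for all valuations. Write $\ordp(x_0)=b_0p^{k_0}$ with $\ordp(b_0)=0$ and put $S:=\ordp(x_0)+k_0$. If $x$ is any anti-partial derivative of $D_{\mathbb{Q}_p,p}(x_0)$, then taking $\ordp$ of $D_{\mathbb{Q}_p,p}(x)=D_{\mathbb{Q}_p,p}(x_0)$ yields, exactly as in \eqref{eq:bk4}, the identity $\ordp(x)+\ordp(\ordp(x))=S$; and since $D_{\mathbb{Q}_p,p}(x)=x\,\ordp(x)/p$, two anti-partial derivatives with the same valuation must coincide. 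Hence the admissible values of $a:=\ordp(\ordp(x))$ are exactly those with $\ordp(S-a)=a$, and $x_0$ (which realizes $a=k_0$) is primitive iff no $a$ with $1\le a<k_0$ satisfies $\ordp(S-a)=a$. For such $a$ one has $\ordp(k_0-a)\le\log_p(k_0)<k_0=\ordp(b_0p^{k_0})$, so by the ultrametric inequality $\ordp(S-a)=\ordp\big(b_0p^{k_0}+(k_0-a)\big)=\ordp(k_0-a)$, which no longer involves $b_0$. The lemma thus reduces to the $b_0$-free claim that $\ordp(k_0-a)\ne a$ for every $1\le a\le k_0-1$.

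I would prove this by splitting on $\ordp(a)$, using $\ordp(k_0)=\ordp\!\big(p(1+p+\cdots+p^{m-1})\big)=1$. If $\ordp(a)=0$ then $\ordp(k_0-a)=0\ne a$; if $\ordp(a)\ge 2$ then $\ordp(k_0-a)=\min(1,\ordp(a))=1\ne a$ since $a\ge p^2$. This leaves the resonance case $\ordp(a)=1$, in which the leading $p$-adic terms of $k_0$ and $a$ can cancel. Writing $a=pa_1$ with $\ordp(a_1)=0$ and $k_0=pu$ with $u=1+p+\cdots+p^{m-1}$, the condition $\ordp(k_0-a)=a$ becomes $\ordp(u-a_1)=pa_1-1$.

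For the resonance case I would first bound $a$: since $p^{a}\le k_0-a<p^{m+1}$ we get $a\le m$, hence $a_1\le m/p$. The integer $u$ has every base-$p$ digit in positions $0,\dots,m-1$ equal to $1$, so the congruence $u\equiv a_1\pmod{p^{\,pa_1-1}}$ entailed by $\ordp(u-a_1)=pa_1-1$ (note $pa_1-1\le m$) forces the lowest $pa_1-1$ digits of $a_1$ all to equal $1$, whence $a_1\ge 1+p+\cdots+p^{\,pa_1-2}\ge p^{\,pa_1-2}$. Comparing with $a_1\le m/p$ yields a contradiction when $p$ is odd: already at $a_1=1$ one would need $1\ge p^{\,p-2}\ge 3$, and the gap only widens for larger $a_1$.

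The resonance case is the crux, and I expect it to be exactly where the hypotheses genuinely bite. For $p\ge 3$ the estimate $a_1\ge p^{\,pa_1-2}$ is comfortably false, so no bad $a$ exists and $x_0$ is primitive. For $p=2$, however, the same estimate degenerates to the equality $1\ge 2^{0}=1$, and indeed $\ordp(k_0-2)=2$, so $a=2$ really does solve $\ordp(k_0-a)=a$; the naive digit/size argument no longer closes, and one can exhibit an honest anti-partial derivative with smaller second valuation. I would therefore treat $p=2$ separately and with great care, and I suspect the clean statement in fact requires $p$ odd (or a modified choice of $k_0$ when $p=2$). Pinning down precisely what happens at this borderline is, to my mind, the main obstacle of the whole proof.
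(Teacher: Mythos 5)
Your reduction is the same one the paper uses: an anti-partial derivative $x$ of $D_{\mathbb{Q}_p,p}(x_0)$ with $\ordp(\ordp(x))=a<k_0$ forces $\ordp(k_0-a)=a$ (your identity $\ordp(S-a)=a$ together with the ultrametric estimate $\ordp(k_0-a)<k_0$ is exactly the paper's step from $bp^{a}-b_0p^{k_0}=k_0-a$), so both arguments come down to showing that no $0\le a<k_0$ satisfies $\ordp(k_0-a)=a$. Where you differ is in how carefully that purely arithmetic claim is verified, and your finer case split on $\ordp(a)$ uncovers a genuine error in the paper. The paper asserts that $\ordp(k_0-k')=k$ with $k'>0$ forces $k'\ge p+\cdots+p^{k-1}>k$; the last inequality fails precisely at $p=k=2$, and indeed $\nu_2(k_0-2)=\nu_2\bigl(4(1+2+\cdots+2^{m-2})\bigr)=2$. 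As you suspected, this is not merely a gap in the proof but a counterexample to the statement: for $p=2$ the element $x=2D_{\mathbb{Q}_2,2}(x_0)/\nu$ with $\nu=b_02^{k_0}+k_0-2$ satisfies $D_{\mathbb{Q}_2,2}(x)=D_{\mathbb{Q}_2,2}(x_0)$ and $\nu_2(\nu_2(x))=2<k_0$, so $x_0$ is not primitive; this also breaks the counting argument in the proof of Theorem \ref{theorem:infinite_derivative} when $p=2$. Your proof for odd $p$ is complete and correct (the omitted case $a=0$ is immediate from $\ordp(k_0)=1\ne 0$): the inequality $a_1\ge 1+p+\cdots+p^{\,pa_1-2}>a_1$ for $p\ge 3$ does all the work, and you have located exactly the borderline where it degenerates. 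The lemma should be restricted to odd $p$, or $k_0$ should be replaced for $p=2$ by an even integer satisfying $\nu_2(k_0-k)\ne k$ for all $0\le k<k_0$.
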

\begin{proof}
Suppose $x_0$ is not the primitive anti-partial derivative of $D_{\mathbb{Q}_p,p}(x_0)$. Let $x \neq x_0$ be another anti-partial derivative of $D_{\mathbb{Q}_p,p}(x_0)$ with $\ordfpp(x) = bp^{k}$ such that $k < k_0$. If $k < 0$, we know that $D_{\mathbb{Q}_p,p}(x_0)$ has exactly one anti-partial derivative. Hence $k \geq 0$. Since $D_{\mathbb{Q}_p,p}(x) = D_{\mathbb{Q}_p,p}(x_0)$, we get $bp^k-b_0p^{k_0}=k_0-k$. This means that $\ordp(k_0-k) = k$. It suffices to show that no $0 \leq k < k_0$ satisfies this relation. It is clear that $k \neq 0$ because $\ordp(k_0) = 1$, and $k \neq 1$ because $\ordp(k_0-1) = 0$. Suppose $k > 1$. If $\nu_p(k_0 - k') = k$ for some $k' > 0$, then $k' \geq p + \cdots + p^{k-1} > k$. Therefore there does not exist an anti-partial derivative $x$ with $k<k_0$. This means that $x_0$ is the primitive anti-partial derivative of $D_{\mathbb{Q}_p,p}(x_0)$.
\end{proof}

The next lemma allows us to construct $b_0$ for every $k_0 > 0$ such that there are exactly $n-1$ different possible values of $c \in \mathbb{Z}_{>0}$ such that $\nu_p(b_0-c) = p^{k_0}c$. This means that the set $C(x_0)$  has exactly $n$ elements (with $0$ included).

\begin{lemma} \label{lemma:define_b}
Fix a positive integer $k$. Let $c_1 = 0$, and for $i \geq 2$, let $c_i := p^{p^{k}c_{i-1}} + c_{i-1}$. Suppose 
\[ C_n := \{c \in \mathbb{Z}_{>0} \; : \; \ordp(c_{n+1}-c) = p^{k}c\}. \]
Then $C_n = \{c_2, \ldots, c_n\}. $
\end{lemma}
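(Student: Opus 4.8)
The plan is to reduce everything to a single valuation identity for the recursively defined sequence. First I would record the basic telescoping fact: since $c_{i}-c_{i-1}=p^{p^{k}c_{i-1}}$ for $i\geq 2$, the sequence $(c_i)$ is strictly increasing with $c_1=0$, $c_2=1$, and the exponents $p^{k}c_i$ are strictly increasing in $i$. Writing, for $i<j$,
\[ c_j-c_i=\sum_{\ell=i}^{j-1}p^{p^{k}c_\ell}, \]
the term of smallest $p$-adic valuation is the one with $\ell=i$, so by the ultrametric (minimum-valuation) principle
\[ \ordp(c_j-c_i)=p^{k}c_i. \]
Applying this with $j=n+1$ gives $\ordp(c_{n+1}-c_i)=p^{k}c_i$ for every $2\leq i\leq n$, which is exactly the defining condition of $C_n$; hence $\{c_2,\dots,c_n\}\subseteq C_n$.

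For the reverse inclusion I would take an arbitrary $c\in\mathbb{Z}_{>0}$ with $\ordp(c_{n+1}-c)=p^{k}c$ and locate it relative to the $c_i$. The condition forces $p^{p^{k}c}\mid(c_{n+1}-c)$, hence $p^{p^{k}c}\leq|c_{n+1}-c|$. If $c\geq c_{n+1}$ this is impossible, because $|c_{n+1}-c|=c-c_{n+1}<c<p^{p^{k}c}$ (the exponential dominates, as $p^{k}\geq 2$), while $c=c_{n+1}$ gives valuation $+\infty$. Thus $0<c<c_{n+1}$, and since the smallest positive $c_i$ is $c_2=1$, the integer $c$ lies in a unique band $c_i\leq c<c_{i+1}$ with $2\leq i\leq n$.

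The hard part is to show that if $c$ is not one of the $c_i$ then it fails the condition. Suppose $c_i<c<c_{i+1}$ and set $d:=c-c_i$, so $0<d<c_{i+1}-c_i=p^{p^{k}c_i}$. From $c_{n+1}-c=\bigl(c_{n+1}-c_i\bigr)-d=p^{p^{k}c_i}-d+\sum_{\ell=i+1}^{n}p^{p^{k}c_\ell}$ I would split off the leading term: the tail $\sum_{\ell=i+1}^{n}p^{p^{k}c_\ell}$ has valuation $\geq p^{k}c_{i+1}$, while the key estimate $0<d<p^{p^{k}c_i}$ gives $\ordp(d)\leq\log_p d<p^{k}c_i=\ordp(p^{p^{k}c_i})$, so that $\ordp(p^{p^{k}c_i}-d)=\ordp(d)<p^{k}c_i<p^{k}c_{i+1}$. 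Hence the leading part dominates and $\ordp(c_{n+1}-c)=\ordp(d)<p^{k}c_i<p^{k}c$, so $c\notin C_n$. Combining the two inclusions yields $C_n=\{c_2,\dots,c_n\}$. The only delicate point, and the one I would be most careful about, is the valuation pinch $\ordp(d)<p^{k}c_i$: it is precisely what guarantees that a misaligned $c$ produces a valuation strictly below the target $p^{k}c$, and it relies on the recursion being engineered so that the gaps $c_{i+1}-c_i$ are exact prime powers with rapidly growing exponents.
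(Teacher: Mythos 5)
Your proposal is correct and follows essentially the same route as the paper: the telescoping identity $c_j-c_i=\sum_{\ell=i}^{j-1}p^{p^{k}c_\ell}$ with the ultrametric inequality gives the inclusion $\{c_2,\dots,c_n\}\subseteq C_n$, and a case analysis on the position of $c$ relative to the $c_i$ rules out everything else. The only cosmetic difference is in the band case $c_i<c<c_{i+1}$, where you compute $\ordp(c_{n+1}-c)=\ordp(c-c_i)<p^{k}c_i<p^{k}c$ directly, while the paper first reduces to $\ordp(c_{i+1}-c)=p^{k}c$ and then derives a magnitude contradiction; both are valid.
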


\begin{proof}
We first note that for any $1 \leq i < j$, 
\[ c_j - c_i = \sum_{m = i}^{j-1} \left( c_{m+1} - c_m \right) = \sum_{m = i}^{j-1} p^{p^{k}c_m} \]
and so $\ordp(c_j - c_i) = p^kc_i$.  This shows that $c_m \in C_n$ if and only if $m \in \{2, 3, \ldots, n\}$.  

Next, we show that no other integers are in $C_n$. 
If  $c \in C_n$ where $c > c_{n+1}$, then $c - c_{n+1} = \alpha p^{p^{k}c}$, where $\alpha > 0$.
By definition of $c_{n+1}$, $c  - c_{n+1} = c - (c_n + p^{p^{k}c_n})$. Thus
\[ c - c_n  = \alpha\,p^{p^{k}c} + p^{p^{k}c_n} = p^{p^{k}c_n} \left( \alpha \, p^{p^{k}(c - c_n)} + 1 \right). \]
This is a contradiction, since the expression on the right hand side is clearly larger than $c - c_n$. This shows that if $c \in C_n$, then $c \leq c_{n+1}$.

Suppose $c \in C_n$ where $c_m < c < c_{m+1}$ for some $2 \leq m \leq n$. We have $\ordp(c_{n+1} - c_{m+1}) = p^{k}c_{m+1}$ when $m < n$. Since $\ordp(c_{n+1} - c) = p^{k}c$, we have 
\[ \ordp(c_{m+1} - c) = \ordp\Big( (c_{n+1} - c) - (c_{n+1} - c_{m+1}) \Big) = p^{k}c.  \]
Therefore $c_{m+1}  - c = \gamma p^{p^{k}c}$ for some $\gamma > 0$. By definition, $c_{m+1} = p^{p^{k}c_m} + c_m$, and so we would have 
\[ p^{p^{k}c_m} + c_m = \gamma p^{p^{k}c}  + c, \]
which is a contradiction, since the left side is clearly less than the right. This shows that if $c \in C_n$ and $c \leq c_{n+1}$, then $c = c_m$ for some $2 \leq m \leq n$. This concludes the proof of the lemma.
\end{proof}

\begin{theorem} \label{theorem:infinite_derivative}
For each positive integer $n$, there are infinitely many $x_0 \in K$ such that $\dKp(x_0)$ has exactly $n$ anti-partial derivatives in $K$.
\end{theorem}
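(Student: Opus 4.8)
The plan is to assemble Lemma \ref{lemma:define_k}, Lemma \ref{lemma:define_b}, and Theorem \ref{theorem:primitive}, carrying out the entire construction inside $\mathbb{Q}_p$. By Lemma \ref{lemma:antidev}, every anti-partial derivative of an element of $\mathbb{Q}_p$ again lies in $\mathbb{Q}_p$, so it will suffice to exhibit infinitely many $x_0 \in \mathbb{Q}_p$ for which $D_{\mathbb{Q}_p,p}(x_0)$ has exactly $n$ anti-partial derivatives in $\mathbb{Q}_p$; the count is then unchanged in any finite extension $K/\mathbb{Q}_p$. Throughout this reduction $\ordfpp = \ordp$ on $\mathbb{Q}_p$, and the idea is that Lemma \ref{lemma:define_k} will supply a double valuation $k_0$ forcing $x_0$ to be primitive, while Lemma \ref{lemma:define_b} will tune the unit part $b_0$ so that the set $C(x_0)$ of Theorem \ref{theorem:primitive} has a prescribed size.

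Fix $n \geq 1$. For each integer $m \geq 2$ I would set $k_0 = k_0(m) := p + p^2 + \cdots + p^m$ as in Lemma \ref{lemma:define_k}, then apply Lemma \ref{lemma:define_b} with $k = k_0$ to obtain the sequence $c_1 = 0, c_2, c_3, \ldots$ and put $b_0 := c_{n+1}$. A short induction (using $c_{i+1} = p^{p^{k_0}c_i} + c_i$, where the adjoined power of $p$ has strictly positive valuation) shows $\ordp(c_i) = 0$ for all $i \geq 2$, so in particular $\ordp(b_0) = 0$. I would then define
\[ x_0 := p^{\,b_0 p^{k_0}} \in \mathbb{Q}_p, \]
so that $\ordp(x_0) = b_0 p^{k_0}$ and $\ordp(\ordp(x_0)) = k_0 > 0$. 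By Lemma \ref{lemma:define_k}, $x_0$ is the primitive anti-partial derivative of $D_{\mathbb{Q}_p,p}(x_0)$, which puts us exactly in the situation of Theorem \ref{theorem:primitive} with this $b_0$ and $k_0$.

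The next step is to count $C(x_0) = \{c \in \mathbb{Z}_{\geq 0} : \ordp(b_0 - c) = p^{k_0}c\}$. The value $c = 0$ always lies in $C(x_0)$ since $\ordp(b_0) = 0$, and it corresponds to $x_0$ itself under the bijection of Theorem \ref{theorem:primitive}. For $c > 0$ the defining condition is precisely the one in Lemma \ref{lemma:define_b} with $b_0 = c_{n+1}$, whose conclusion gives $\{c \in \mathbb{Z}_{>0} : \ordp(c_{n+1}-c) = p^{k_0}c\} = \{c_2, \ldots, c_n\}$, a set of $n-1$ elements (empty when $n = 1$). Hence $C(x_0) = \{0, c_2, \ldots, c_n\}$ has exactly $n$ elements, and by the one-to-one correspondence of Theorem \ref{theorem:primitive} the element $D_{\mathbb{Q}_p,p}(x_0)$ has exactly $n$ anti-partial derivatives.

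Finally, to get infinitely many such $x_0$ I would let $m$ range over $\{2, 3, 4, \ldots\}$: the values $k_0(m)$ are pairwise distinct, and since $\ordp(\ordp(x_0)) = k_0(m)$, the integers $\ordp(x_0) = b_0 p^{k_0(m)}$ — hence the elements $x_0 = p^{\ordp(x_0)}$ — are pairwise distinct. This produces infinitely many $x_0 \in \mathbb{Q}_p \subseteq K$, each of whose derivative has exactly $n$ anti-partial derivatives. Since all the genuine arithmetic has been isolated in the two preceding lemmas, I expect no real obstacle here; the only care required is bookkeeping, namely checking $\ordp(b_0) = 0$ so that $x_0$ has the intended double valuation $k_0$, and confirming that adjoining the point $c = 0$ turns the $n-1$ solutions of Lemma \ref{lemma:define_b} into exactly $n$ anti-partial derivatives rather than $n-1$.
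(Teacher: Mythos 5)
Your proposal is correct and follows essentially the same route as the paper's own proof: reduce to $\mathbb{Q}_p$ via Lemma \ref{lemma:antidev}, take $k_0 = k_0(m)$ from Lemma \ref{lemma:define_k} and $b_0 = c_{n+1}$ from Lemma \ref{lemma:define_b}, set $x_0 = p^{b_0 p^{k_0}}$, and count anti-partial derivatives through $C(x_0)$ and Theorem \ref{theorem:primitive}. Your added checks (that $\ordp(b_0)=0$ and that $c=0$ contributes the $n$-th element of $C(x_0)$) are correct details the paper leaves implicit.
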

\begin{proof}
By Lemma \ref{lemma:antidev}, it suffices to assume that $K = \mathbb{Q}_p$ and $\mathfrak{p} = (p)$. For every integer $m \geq 2$, let $k_0 = k_0(m)$ be defined as in Lemma \ref{lemma:define_k}, and let $b_0 = c_{n+1}$ be defined as in Lemma \ref{lemma:define_b} for $k = k_0$. Set $x_0 := p^{b_0p^{k_0}}$. Lemma \ref{lemma:define_k} implies that $x_0$ is the primitive anti-partial derivative of $D_{\mathbb{Q}_p,p}(x_0)$. Lemma \ref{lemma:define_b} implies that $D_{\mathbb{Q}_p,p}(x_0)$ has exactly $n$ anti-partial derivatives. Therefore, for each positive integer $n$, there exists infinitely many $x_0 \in \mathbb{Q}_p$ such that $D_{\mathbb{Q}_p,p}(x_0)$ has exactly $n$ anti-partial derivatives with $x_0$ being its primitive anti-partial derivative.
\end{proof}

\section{Number Fields} \label{section:numberfields}

In this section, we will generalize arithmetic derivative and arithmetic partial derivative to number fields. Recall that the explicit formula of the arithmetic derivative defined on $\mathbb{Q}$:
\[D_{\mathbb{Q}}(x) = x \sum_{p \mid x} \dfrac{\ordp(x)}{p}.\]
Let $K/\mathbb{Q}$ be a number field of finite degree. One could mimic the above formula and define the arithmetic derivative on $K$ by the formula:
\[D_{K}(x) = x \sum_{\mathfrak{p} \mid x} \dfrac{\ordfpp(x)}{p},\]
where $\mathfrak{p}$ are prime ideals in $\mathcal{O}_K$. The sum is finite as there are finitely many $\mathfrak{p}$ such that $\ordfpp(x) \neq 0$. This formula presents a challenge. Let $p \in \mathbb{Q}$ be a rational prime. Then 
\[D_K(p) = p \sum_{\mathfrak{p} \mid p} \dfrac{\nu_{\mathfrak{p}}(p)}{p} = \sum_{\mathfrak{p} \mid p} \nu_{\mathfrak{p}}(p) = g(p,K) \cdot 1 = g(p,K),\]
where $g(p,K)$ is the number of prime ideals in $\mathcal{O}_K$ that divide $p$. When $g(p,K) \neq 1$, $D_K(p) \neq D_{\mathbb{Q}}(p)$ so the above formula of $D_K$ does not give a true extension of $D_{\mathbb{Q}}$. In order for $D_K(x) = D_{\mathbb{Q}}(x)$ for all $x \in \mathbb{Q}$, we will need to divide $g(p,K)$. Furthermore, let $\mathfrak{p}_1$ and $\mathfrak{p}_2$ be two prime ideals in $\mathcal{O}_K$ that divide $p$ and let $L/K$ be a finite extension. We know that in general $g(\mathfrak{p}_1, L) \neq g(\mathfrak{p}_2, L)$ unless $L/K$ is finite Galois. So we will start the generalization of $D_{\mathbb{Q}}$ to finite Galois extensions. Then we can further generalize $D_{\mathbb{Q}}$ to number fields by taking restriction.

\subsection{Finite Galois extensions}
Let $K$ be a finite Galois extension of $\mathbb{Q}$ of degree $n$. Let $\mathcal{O}_K$ be the ring of integers and $\mathfrak{p}$ a nonzero prime ideal of $\mathcal{O}_K$ such that $p \in \mathfrak{p}$. There is a discrete valuation $\ordfpp$ on $K$ that extends the $p$-adic valuation $\nu_p$ on $\mathbb{Q}$. This induces a norm $|\cdot|_{\ordfpp} = [\mathcal{O}_K:\mathfrak{p}]^{-\ordfpp(\cdot)}$ on $K$. Let $K_{\ordfpp}$ be the completion of $K$ with respect to the $\mathfrak{p}$-adic topology and thus $K_{\ordfpp}$ is a finite extension of $\mathbb{Q}_p$ such that $\mathfrak{p} \cap \mathbb{Q} = (p)$ (denoted by $\mathfrak{p} \mid p$). Let $e(K_{\ordfpp}|\mathbb{Q}_p)$ be the ramification index and $f(K_{\ordfpp}|\mathbb{Q}_p) := [\mathcal{O}_K/\mathfrak{p} : \mathbb{F}_p]$ be the inertia degree of the extension $K_{\ordfpp}/\mathbb{Q}_p$. One has the following decomposition:
\[p\mathcal{O}_K = \prod_{\mathfrak{p} \mid p} \mathfrak{p}^{e(K_{\ordfpp}|\mathbb{Q}_p)}.\]
It is well known that for every fixed prime number $p$, we have the formula
\begin{equation} \label{eq:sum1}
n = \sum_{\mathfrak{p} \mid p} e(K_{\ordfpp}|\mathbb{Q}_p) f(K_{\ordfpp}|\mathbb{Q}_p).
\end{equation}
The Galois group $G(K/\mathbb{Q})$ acts transitively on the set of prime ideals $\{\mathfrak{p} \subset \mathcal{O}_K : \mathfrak{p} \mid p\}$ for every fixed prime $p \in \mathbb{Q}$ \cite[Chapter 1, Section 7, Proposition 19]{localfields}. This implies that for every nonzero prime ideal $\mathfrak{p} \mid p$, the ramification index $e(K_{\ordfpp}|\mathbb{Q}_p)$ and the inertia degree $f(K_{\ordfpp}|\mathbb{Q}_p)$ depend only on $p$. If we denote them by $e(p, K)$ and $f(p, K)$ respectively, then formula \eqref{eq:sum1} becomes
\begin{equation} \label{eq:sum2}
n = e(p, K) f(p, K) g(p,K),
\end{equation}
where $g(p,K)$ (again only depends on $p$) is the number of distinct prime ideals $\mathfrak{p}$ such that $\mathfrak{p} \mid p$. Now we can extend the arithmetic derivative $D_{\mathbb{Q}}$ to $K$. For every nonzero $x \in K$, we define
\[D_K(x) := x \sum_{\mathfrak{p} \mid x} \dfrac{\ordfpp(x)}{pg(p,K)}.\]
One can check that $D_K$ satisfies the Leibniz rule:
\begin{align*}
D_K(xy) &= xy \sum_{\mathfrak{p} \mid xy} \dfrac{\ordfpp(xy)}{pg(p,K)} \\
&= xy \sum_{\mathfrak{p} \mid xy} \dfrac{\ordfpp(x)+\ordfpp(y)}{pg(p,K)} \\
&= \Big(\sum_{\mathfrak{p} \mid xy} \dfrac{x\ordfpp(x)}{pg(p,K)}\Big)y + x\Big(\sum_{\mathfrak{p} \mid xy} \dfrac{y\ordfpp(y)}{pg(p,K)}\Big)\\
&= \Big(\sum_{\mathfrak{p} \mid x} \dfrac{x\ordfpp(x)}{pg(p,K)}\Big)y + x\Big(\sum_{\mathfrak{p} \mid y} \dfrac{y\ordfpp(y)}{pg(p,K)}\Big) \\
&= D_K(x)y+xD_K(y).
\end{align*}
It is easy to check that $D_K(0) = 0$. To check that $D_K: K \to K$ extends $D_{\mathbb{Q}}: \mathbb{Q} \to \mathbb{Q}$, recall that for every prime $p$, we have $\ordfpp(x) = \ordp(x)$ for every $x \in \mathbb{Q}$. And so for every nonzero $x \in \mathbb{Q}$, we get
\[D_K(x) = x \sum_{\mathfrak{p} \mid x} \dfrac{\ordfpp(x)}{pg(p,K)}  = x \sum_{p \mid x} \Big(\dfrac{g(p,K) \cdot \ordp(x)}{pg(p,K)} \Big)  = x \sum_{p \mid x} \dfrac{\ordp(x)}{p} = D_{\mathbb{Q}}(x).\]

\subsection{Number fields} 
Let $K/\mathbb{Q}$ be a number field and let $L/K$ be an extension such that $L/\mathbb{Q}$ is Galois (e.g., one can take $L$ to be a Galois closure of $K/\mathbb{Q}$). For every $x \in K$, one can define $D_K(x) = D_L(x)$. But we want to make sure that $D_L(x) = D_K(x)$ for all $x \in K$ so the definition of $D_K$ does not depend on the choice of Galois extensions.

\begin{lemma}
Suppose $K/\mathbb{Q}$ and $L/\mathbb{Q}$ are finite Galois extensions. We have $D_K(x) = D_L(x) $ for every $x \in K \cap L$.
\end{lemma}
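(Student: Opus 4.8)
The plan is to pass to the compositum and reduce the statement to a one-sided comparison between a field and a larger Galois field containing it. Set $M = KL$. Since $K/\mathbb{Q}$ and $L/\mathbb{Q}$ are finite Galois, so is $M/\mathbb{Q}$; moreover $M/K$ and $M/L$ are Galois because $M/\mathbb{Q}$ is Galois and $K, L$ are intermediate fields. It therefore suffices to prove the following claim: if $K \subseteq M$ are finite Galois extensions of $\mathbb{Q}$, then $D_M(x) = D_K(x)$ for every $x \in K$. Granting this, for $x \in K \cap L$ we obtain $D_K(x) = D_M(x) = D_L(x)$, which is exactly what we want.

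To prove the claim, I would first record two standard facts about how primes and valuations behave in the tower $M/K/\mathbb{Q}$. First, for a prime $\mathfrak{P}$ of $\mathcal{O}_M$ lying over a prime $\mathfrak{p}$ of $\mathcal{O}_K$ (both above the rational prime $p$), the restriction $\ordfPP|_K$ is a discrete valuation on $K$ extending $\nu_p$ whose maximal ideal is $\mathfrak{P} \cap \mathcal{O}_K = \mathfrak{p}$; by uniqueness of the normalized extension of $\nu_p$ this forces $\ordfPP|_K = \ordfpp$, so that $\ordfPP(x) = \ordfpp(x)$ for every $x \in K$. Second, because $\mathrm{Gal}(M/\mathbb{Q})$ acts transitively on the primes of $\mathcal{O}_M$ above $p$ and this action is compatible with restriction to $K$, the number $g'$ of primes $\mathfrak{P} \mid \mathfrak{p}$ is independent of the choice of $\mathfrak{p} \mid p$; consequently $g(p,M) = g(p,K)\,g'$.

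With these in hand the claim is a reindexing computation. Fixing $x \in K$ and grouping the defining sum for $D_M(x)$ first by the rational prime $p$ and then by the primes $\mathfrak{p} \mid p$ of $K$, I would use $\ordfPP(x) = \ordfpp(x)$ to compute
\[
\sum_{\mathfrak{P} \mid p} \ordfPP(x) = \sum_{\mathfrak{p} \mid p} \sum_{\mathfrak{P} \mid \mathfrak{p}} \ordfPP(x) = \sum_{\mathfrak{p} \mid p} g'\,\ordfpp(x) = g' \sum_{\mathfrak{p} \mid p} \ordfpp(x).
\]
Dividing by $p\,g(p,M) = p\,g(p,K)\,g'$ cancels the factor $g'$, and summing over $p$ gives
\[
D_M(x) = x \sum_{p} \frac{1}{p\,g(p,K)} \sum_{\mathfrak{p} \mid p} \ordfpp(x) = x \sum_{\mathfrak{p} \mid x} \frac{\ordfpp(x)}{p\,g(p,K)} = D_K(x).
\]
This establishes the claim and hence the lemma.

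The routine parts are the reindexing and the cancellation; the conceptual heart, and the step I expect to require the most care, is the second fact, namely that the number of primes of $M$ over each prime of $K$ above $p$ is the same for all such primes. This is where both Galois hypotheses genuinely enter: transitivity of $\mathrm{Gal}(K/\mathbb{Q})$ on the primes of $K$ above $p$, together with surjectivity of the restriction $\mathrm{Gal}(M/\mathbb{Q}) \to \mathrm{Gal}(K/\mathbb{Q})$ and transitivity of $\mathrm{Gal}(M/\mathbb{Q})$ on the primes of $M$ above $p$, is what makes $g'$ well defined and forces the clean factorization $g(p,M) = g(p,K)\,g'$ that drives the cancellation.
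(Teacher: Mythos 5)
Your proposal is correct and follows essentially the same route as the paper: the heart in both cases is the one-sided lemma that $D_M(x) = D_K(x)$ for $x \in K$ when $K \subseteq M$ are both Galois over $\mathbb{Q}$, proved by the identical reindexing of the sum over primes together with the factorization $g(p,M) = g(p,K)\,g'$ coming from transitivity of the Galois action. The only cosmetic difference is the final reduction: you pass upward to the compositum $KL$ (Galois over $\mathbb{Q}$, hence over $K$ and $L$), while the paper descends to the intersection $K \cap L$ (also Galois over $\mathbb{Q}$); both work equally well and rest on the same key computation.
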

\begin{proof}
We first assume that $K \subset L$. Since $L/\mathbb{Q}$ is Galois, we know that $L/K$ is also Galois. For every rational prime $p$ and nonzero prime ideals $\mathfrak{p}_1$ and $\mathfrak{p}_2$ of $\mathcal{O}_K$ with $\mathfrak{p}_1 \mid p$ and $\mathfrak{p}_2 \mid p$, we get $g(\mathfrak{p}_1, L) = g(\mathfrak{p}_2,L)$. Let $\mathfrak{p}$ and $\mathfrak{P}$ be two prime ideals in $\mathcal{O}_K$ and $O_L$ respectively such that $\mathfrak{P} \mid \mathfrak{p} \mid p$.
For every nonzero $x \in K$, we have
\begin{align*}
D_L(x) &= x \sum_{\mathfrak{P} \mid x} \dfrac{\ordfPP(x)}{pg(p,L)} = x \sum_{\mathfrak{p} \mid x} \sum_{\mathfrak{P} \mid \mathfrak{p}} \dfrac{\ordfpp(x)}{pg(p,L)} \\
&= x \sum_{\mathfrak{p} \mid x} \dfrac{g(\mathfrak{p},L)\ordfpp(x)}{pg(\mathfrak{p}, L)g(p,K)} = x \sum_{\mathfrak{p} \mid x} \dfrac{\ordfpp(x)}{pg(p,K)} = D_K(x).
\end{align*}
This shows that $D_K(x) = D_L(x)$ for all $x \in K$ if $K \subset L$.

Now suppose $K/\mathbb{Q}$ and $L/\mathbb{Q}$ are two arbitrary finite Galois extensions. Since $K \cap L$ is also a finite Galois extension of $\mathbb{Q}$, for every $x \in K \cap L$, we have $D_K(x) = D_{K \cap L}(x)$ by the previous paragraph. Using the same argument, we get $D_L(x) = D_{K \cap L}(x)$ for every $x \in K \cap L$, and therefore $D_K(x) = D_L(x)$ for every $x \in K \cap L$.
\end{proof}

Suppose $K/\mathbb{Q}$ is a number field (not necessarily Galois). For every $x \in K$, we can define $D_K(x) := D_{K^{\textrm{Gal}}}(x)$ where $K^{\textrm{Gal}}$ is a Galois closure of $K/\mathbb{Q}$. When $x \neq 0$, it is clear that $D_K(x)/x \in \mathbb{Q}$ and thus $D_K(x) \in K$. We have a well-defined arithmetic derivative $D_K: K \to K$ when $K$ is a number field.

\subsection{Arithmetic subderivative}
Let $S$ be a (finite or infinite) subset of the prime numbers $\mathbb{P}$. One can define the so-called arithmetic subderivative $D_{\mathbb{Q}, S}: \mathbb{Q} \to \mathbb{Q}$ by 
\[D_{\mathbb{Q}, S}(x) = \sum_{p \in S} x\ordp(x)/p.\] It is easy to see that $D_{\mathbb{Q}, S} = \sum_{p \in S} D_p$ and $D_{\mathbb{Q}} = \sum_{p \in \mathbb{P}} D_p$. One can extend $D_{\mathbb{Q}, S}$ to all finite Galois extensions $K/\mathbb{Q}$. Let $T$ be a set of prime ideals of $\mathcal{O}_K$. For every nonzero $x \in K$, we define
\[D_{K, T}(x): = x \sum_{\mathfrak{p} \in T, \mathfrak{p} \mid p} \dfrac{\ordfpp(x)}{pg(p,K)}.\]
If $T = \{\mathfrak{p}\}$ contains only one prime ideal, then we call $D_{K,T} = \dKp$ the arithmetic partial derivative with respect to $\mathfrak{p}$. By taking $K = \mathbb{Q}$ and $\mathfrak{p} = \{p\}$, we can see $\dKp$ is the generalization of arithmetic partial derivative with respect to $p$. Suppose $L/K$ is a finite Galois extension. Let
\[T_{L/K} = \{\mathfrak{P} : \mathfrak{P} \textrm{ prime ideal of } \mathcal{O}_L, \exists \; \mathfrak{p} \in T \textrm{ such that } \mathfrak{P} \mid \mathfrak{p}\}.\]
For every nonzero $x \in K$, we have
\begin{align*}
D_{L,T_{L/K}}(x) &= \sum_{\mathfrak{P} \in T_{L/K}, \mathfrak{P} \mid p} \dfrac{x\nu_{\mathfrak{P}}(x)}{pg(p,L)} = \sum_{\mathfrak{p} \in T} \sum_{\mathfrak{P} \in T_{L/K}, \mathfrak{P} \mid \mathfrak{p}} \dfrac{x\nu_{\mathfrak{P}}(x)}{pg(p,L)} \\
&= \sum_{\mathfrak{p} \in T} g(\mathfrak{p}, L) \dfrac{x\ordfpp(x)}{pg(p,K)g(\mathfrak{p}, L)} = \sum_{\mathfrak{p} \in T} \dfrac{x\ordfpp(x)}{pg(p,K)} = D_{K,T}(x).
\end{align*}
In this case, $D_{L,T_{L/K}}$ extends $D_{K,T}$.

If $K/\mathbb{Q}$ is a number field (not necessarily Galois), we can define $D_{K,T}$ via a larger Galois extension. Let $L/K$ be a finite extension such that $L/\mathbb{Q}$ is Galois. Let $T_{L/K}$ be defined as above. We can define $D_{K,T}(x) := D_{L, T_{L/K}}(x)$ for all $x \in K$. Again this definition does not depend on the choice of Galois extensions. Let $L_1/K$ and $L_2/K$ be finite extensions such that $L_1/\mathbb{Q}$ and $L_2/\mathbb{Q}$ are Galois. Let $L_3 := L_1 \cap L_2$ and $T' := T_{L_3 / K}$. We note that $T_{L_1/K} = T'_{L_1/L_3}$ and $T_{L_2/K} = T'_{L_2/L_3}$. Therefore for every $x \in K \subset L_3$, we have
\[D_{L_1, T_{L_1/K}}(x) = D_{L_1, T'_{L_1/L_3}}(x) = D_{L_3, T'}(x) = D_{L_2, T'_{L_2/L_3}}(x) = D_{L_2, T_{L_2/K}}(x).\]

\begin{remark}
Let $K/\mathbb{Q}$ be a finite Galois extension. Just like in the local case, one can ask whether Theorems \ref{theorem:main0} and \ref{theorem:main2} are true for $\dKp$. Note that in the global case $\dKp(x) = \frac{x\ordfpp(x)}{pg(p,K)}$, whereas in the local case $g(p,K) = 1$. If $\ordp(g(p,K)) = 0$, then $\ordfpp(\dKp(x)) = \ordfpp(x) + \ordfpp(\ordfpp(x)) - 1$, which is the same as Equation \eqref{equation:ordp_inc}. In this case, Theorems \ref{theorem:main0} and \ref{theorem:main2} are still true and can be proved in a similar fashion. If  $\ordp(g(p,K)) = a > 0$, then $\ordfpp(\dKp(x)) = \ordfpp(x) + \ordfpp(\ordfpp(x)) - 1 -a$. In this case, the behavior of the $\ordfpp$ sequence of $x$ warrants further study.
\end{remark}

\section{Arithmetic Logarithmic Derivative }

\subsection{Local Case}
The logarithmic partial derivative (with respect to $p$) $\ldQ : \mathbb{Q}^{\times} \to \mathbb{Q}$ is a homomorphism defined by the formula 
\[\ldQ(x)= D_{\mathbb{Q},p}(x)/x\]
because 
\[\ldQ(xy) = \dfrac{D_{\mathbb{Q},p}(xy)}{xy} = \dfrac{D_{\mathbb{Q},p}(x)y+xD_{\mathbb{Q},p}(y)}{xy} = \ldQ(x) + \ldQ(y).\]
The image of $\ldQ$ is 
\[\ldQ(\mathbb{Q}^{\times}) = \{m/p : m \in \mathbb{Z}\} = \langle 1/p \rangle \cong \mathbb{Z}\]
and thus $\ldQ$ is not onto. Suppose $\ldQ(x) = 0$, then $D_{\mathbb{Q},p}(x) = 0$ and thus $\ordp(x) = 0$. Therefore
\[\mathrm{Ker}(\ldQ) = \{x \in \mathbb{Q}^{\times} : \ordp(x) = 0\}.\]
One can extend $\ldQ$ to $\mathbb{Q}_p^{\times}$ by the formula $\ldQp(x) := D_{\mathbb{Q}_p,p}(x)/x \in \mathbb{Q}$. Using the same argument, we get 
\[\ldQp(\mathbb{Q}_p^{\times}) = \{m/p : m \in \mathbb{Z}\}, \qquad \mathrm{Ker}(\ldQp) = \{x \in \mathbb{Q}_p^{\times} : \ordp(x) = 0\}.\]
Let $K/\mathbb{Q}_p$ be a finite extension. We can define $\ldKp : K^{\times} \to \mathbb{Q}$ as 
\[\ldKp(x) := \frac{\dKp(x)}{x} = \dfrac{\ordfpp(x)}{p}.\]
It is easy to see the kernel of $\ldKp$ is
\[\mathrm{Ker}(\ldKp) = \{x \in K^{\times} : \ordfpp(x) = 0\}.\]
The description of the image of $\ldKp$ depends on whether $p$ divides the ramification index $e$. Let $e = p_1^{r_1}p_2^{r_2}\cdots p_j^{r_j}$ be the unique factorization of the ramification index into prime powers. If $p \notin \{p_1, p_2, \dots, p_j\}$, then
\[\ldKp(K^{\times}) = \{m/pe : m \in \mathbb{Z}\} = \langle 1/p, 1/p_1^{r_1}, \dots, 1/p_j^{r_j} \rangle \cong \mathbb{Z}.\]
If $p \in \{p_1, p_2, \dots, p_j\}$ and assume $p = p_1$, then
\[\ldKp(K^{\times}) = \{m/pe : m \in \mathbb{Z}\} = \langle 1/p_1^{r_1+1}, 1/p_2^{r_2}, \dots, 1/p_j^{r_j} \rangle \cong \mathbb{Z}.\]

\subsection{Global Case}
If $K/\mathbb{Q}$ is a finite Galois extension, one can define the arithmetic logarithmic derivative $\ld_K: K^{\times} \to \mathbb{Q}$ as 
\[\ld_K(x) = \dfrac{D_K(x)}{x} = \sum_{\mathfrak{p} \mid x} \dfrac{\ordfpp(x)}{pg(p, K)} \in \mathbb{Q}.\]
It is easy to show that $\ld_K$ is a group homomorphism. When $K = \mathbb{Q}$, we get that $\ld_{\mathbb{Q}}(x) = \sum_{p \mid x} \frac{\ordp(x)}{p}$. Hence $\ld_{\mathbb{Q}}(\mathbb{Q}^{\times}) = \langle \frac{1}{p} : p \in \mathbb{P} \rangle$. For every finite Galois extension $K/\mathbb{Q}$, one can show that $\ld_K(K^{\times})$ are isomorphic as subgroups of $\mathbb{Q}$. Before we prove this result, we need to recall a concept called $p$-height in the classification of subgroups of $\mathbb{Q}$. Let $G$ be an (additive) subgroup of $\mathbb{Q}$ and $g \in G$. The $p$-height of $g$ in $G$ is $k$ if $p^kx=g$ is solvable in $G$ and $p^{k+1}x = g$ is not. If $p^kx=a$ has a solution for every $k$, then we say that the $p$-height of $a$ in $G$ is infinite. Let $H_{p_i,G}(g)$ be the $p_i$-height of $g$ in $G$. Set $H_G(g) := (H_{2,G}(g), H_{3,G}(g), H_{5,G}(g), \ldots)$. It turned out that $H_G(1)$ is an invariant of the subgroup $G$ in the following sense.

\begin{theorem}{\cite[Theorem 4]{ratiso}} \label{theorem:ratiso}
Let $G_1$ and $G_2$ be two subgroups of $\mathbb{Q}$. Then $G_1 \cong G_2$ if and only if $H_{G_1}(1)$ and $H_{G_2}(1)$ only differ in finitely many indices, and in the case $H_{p_i,G_1}(1) \neq H_{p_i,G_2}(1)$, both of them are finite.
\end{theorem}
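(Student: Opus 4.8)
The statement is the classical classification of rank-one torsion-free abelian groups by their height sequences, so I would prove it directly from the structure of subgroups of $\mathbb{Q}$. Since $H_{G_1}(1)$ and $H_{G_2}(1)$ are defined only when $1 \in G_1$ and $1 \in G_2$, I assume this throughout. The plan rests on two preliminary facts. First I would establish a \emph{height-shift formula}: for every nonzero integer $k$, every nonzero $x$ in a subgroup $G \subseteq \mathbb{Q}$, and every prime $p$, one has $H_{p,G}(kx) = \nu_p(k) + H_{p,G}(x)$. One inequality is immediate from the implication $p^j z = x \Rightarrow p^{j}(kz) = kx$; the reverse uses torsion-freeness of $G$ (to cancel powers of $p$) together with a B\'ezout identity $1 = ak + bp^j$ to convert a solution of $p^j z = kx$ into a solution of $p^j w = x$ when $p \nmid k$. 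Applying this to the relation $ny = mx$ (valid for any two nonzero $x,y \in G$) then shows that $H_G(x)$ and $H_G(y)$ agree outside the finitely many primes dividing $mn$ and share exactly the same infinite entries; I will call such sequences \emph{equivalent}, and note that equivalence is transitive.

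The second ingredient I would prove is a classification lemma: a subgroup $G \subseteq \mathbb{Q}$ with $1 \in G$ is recovered from its characteristic at $1$ via
\[
G = \{\, q \in \mathbb{Q} : \nu_p(q) \geq -H_{p,G}(1) \text{ for every prime } p \,\}.
\]
The inclusion ``$\subseteq$'' follows by writing $q = a/b$ in lowest terms; for each prime $p \mid b$ a B\'ezout identity exhibits $1/p^{\nu_p(b)}$ as an element of $G$, forcing $\nu_p(b) \le H_{p,G}(1)$. For the reverse inclusion --- the step I expect to be the main obstacle --- the essential subtlety is that $G$ is an \emph{additive} group, so one cannot simply multiply the available elements $1/p_i^{k_i}$ to produce $1/\prod_i p_i^{k_i}$. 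Instead I would use that the complementary products $b_i = b/p_i^{k_i}$ are globally coprime, pick integers $c_i$ with $\sum_i c_i b_i = 1$, and write $1/b = \sum_i c_i\,(1/p_i^{k_i}) \in G$, whence $q = a\cdot(1/b) \in G$. This lemma shows that $G$ is completely determined by $H_G(1)$.

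For the forward implication, suppose $\varphi : G_1 \to G_2$ is an isomorphism. Since a nonzero subgroup of $\mathbb{Q}$ has rank one, I would show $\varphi$ is multiplication by the rational $q := \varphi(1) \in G_2$: for $x \in G_1$ choose integers $m,n$ with $nx = m$, so that $n\varphi(x) = \varphi(m) = m\varphi(1) = mq$ and hence $\varphi(x) = qx$. Because $\varphi$ is a group isomorphism it preserves solvability of $p^j z = 1$, giving $H_{p,G_2}(q) = H_{p,G_1}(1)$ for every $p$; since $1,q \in G_2$, the first preliminary fact makes $H_{G_2}(q)$ equivalent to $H_{G_2}(1)$, and by transitivity $H_{G_1}(1)$ is equivalent to $H_{G_2}(1)$, which is exactly the stated condition.

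For the converse, suppose $H_{G_1}(1) = (h_p)_p$ and $H_{G_2}(1) = (h'_p)_p$ are equivalent, so $h_p = h'_p$ for all but finitely many $p$ and both are finite wherever they differ. Then $q := \prod_p p^{\,h_p - h'_p}$ is a well-defined nonzero rational (a finite product with $\nu_p(q) = h_p - h'_p$). Using the explicit set description from the classification lemma I would compute $q G_1 = \{\,y \in \mathbb{Q} : \nu_p(y) \geq \nu_p(q) - h_p \text{ for all } p\,\} = \{\,y : \nu_p(y) \geq -h'_p \text{ for all } p\,\} = G_2$, where the middle equality uses $\nu_p(q) - h_p = -h'_p$ (and the primes with $h_p = h'_p = \infty$ contribute $\nu_p(q)=0$, so the conditions there match vacuously). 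Hence multiplication by $q$ is an isomorphism $G_1 \xrightarrow{\sim} G_2$, completing both directions.
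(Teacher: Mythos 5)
The paper does not prove this statement at all: it is quoted verbatim from Kluempen and Reboli (\cite[Theorem 4]{ratiso}) and used as a black box in the proof of Theorem \ref{theorem:galiso}. So there is no in-paper argument to compare against; what you have supplied is a self-contained proof of the cited classical result, and it is correct. Your two preliminary lemmas are exactly the standard ingredients: the height-shift formula $H_{p,G}(kx)=\nu_p(k)+H_{p,G}(x)$ (your B\'ezout cancellation argument for the reverse inequality is right), and the recovery of $G$ from $H_G(1)$ as $\{q:\nu_p(q)\geq -H_{p,G}(1)\}$, where you correctly flag and resolve the only real trap --- that $G$ is merely an additive group, so $1/b$ must be assembled from the $1/p_i^{k_i}$ by a global B\'ezout combination rather than by multiplication. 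The forward direction via the observation that any isomorphism of rank-one subgroups of $\mathbb{Q}$ is multiplication by $\varphi(1)$, and the converse via $qG_1=G_2$ with $q=\prod p^{h_p-h_p'}$, are both sound. Two small caveats, neither fatal: you rightly note that the statement only parses when $1\in G_1$ and $1\in G_2$ (as written in the paper, a group like $2\mathbb{Z}$ has no defined $H_G(1)$, so your standing assumption is genuinely needed rather than optional), and the degenerate case $G=\{0\}$ is silently excluded; in the paper's application ($G=\langle 1/p : p\in\mathbb{P}\rangle$ and $G=\ld_K(K^\times)$, both containing $1$) these hypotheses hold.
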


\begin{theorem} \label{theorem:galiso}
Let $K/\mathbb{Q}$ be a finite Galois extension. Then $\ld_{K}(K^{\times}) \cong \langle \frac{1}{p} : p \in \mathbb{P} \rangle < \mathbb{Q}$.
\end{theorem}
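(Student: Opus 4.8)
The plan is to apply the height criterion of Theorem \ref{theorem:ratiso}, comparing the subgroup $G := \ld_K(K^{\times})$ with the target group $G_0 := \langle \frac{1}{p} : p \in \mathbb{P}\rangle = \ld_{\mathbb{Q}}(\mathbb{Q}^{\times})$. The heights of $1$ in $G_0$ are easy: every element of $G_0$ is a finite $\mathbb{Z}$-combination of the generators $1/p$, hence has squarefree denominator, so $1/q \in G_0$ while $1/q^2 \notin G_0$. Thus $H_{q,G_0}(1) = 1$ for every prime $q$, i.e. $H_{G_0}(1) = (1,1,1,\ldots)$. It therefore suffices to show that $H_{q,G}(1)$ equals $1$ for all but finitely many $q$ and is finite at the remaining primes.

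First I would pin down $G$ explicitly. Fix a rational prime $p$ and let $\mathfrak{p}_1, \ldots, \mathfrak{p}_g$ be the primes above it, with $g = g(p,K)$ and common ramification index $e = e(p,K)$; these depend only on $p$ by transitivity of the Galois action, which is exactly what makes \eqref{eq:sum2} meaningful. The contribution of $x \in K^{\times}$ to $\ld_K(x)$ coming from the primes above $p$ is $\frac{1}{pg}\sum_{i=1}^{g} \nu_{\mathfrak{p}_i}(x)$, and each $\nu_{\mathfrak{p}_i}$ takes values in $\frac{1}{e}\mathbb{Z}$. By the approximation theorem I can choose $x$ with prescribed valuations at the $\mathfrak{p}_i$ and trivial valuation at all other primes; letting the $\nu_{\mathfrak{p}_i}(x)$ vary shows this contribution ranges over exactly $\frac{1}{peg}\mathbb{Z}$. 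Since such $x$ may be supported above a single rational prime, each $\frac{1}{peg}$ lies in $G$, and since every value $\ld_K(x)$ is a finite sum of such contributions,
\[G = \Big\langle \frac{1}{p\, e(p,K)\, g(p,K)} : p \in \mathbb{P} \Big\rangle.\]

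Writing $d_p := e(p,K)g(p,K)$, I would next compute $H_{q,G}(1)$. Since $q^k x = 1$ forces $x = 1/q^k$, the height equals $\max\{k : 1/q^k \in G\}$, and I claim this is $M_q := \max_p \nu_q(p\, d_p)$. The upper bound is the non-Archimedean estimate $\nu_q\big(\sum_p c_p/(p d_p)\big) \geq -M_q$, so $1/q^{M_q+1} \notin G$; for the matching lower bound I take a generator $1/(p_0 d_{p_0})$ realizing $M_q$, write $p_0 d_{p_0} = q^{M_q} s$ with $q \nmid s$, and observe $s/(p_0 d_{p_0}) = 1/q^{M_q} \in G$. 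The decisive arithmetic input is that $d_p = n/f(p,K)$ divides $n = [K:\mathbb{Q}]$ by \eqref{eq:sum2}, so $\nu_q(d_p) \leq \nu_q(n)$ for all $p$. Consequently $M_q$ is finite for every $q$, and when $q \nmid n$ we get $\nu_q(d_p) = 0$ for all $p$, whence $M_q = \nu_q(q) = 1$. Therefore $H_{q,G}(1) = 1 = H_{q,G_0}(1)$ outside the finitely many $q \mid n$, and at those primes both heights are finite; Theorem \ref{theorem:ratiso} then yields $G \cong G_0$.

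I expect the main obstacle to be the exact identification of $G$ in the second step rather than the height bookkeeping. The containment $\frac{1}{p d_p} \in G$ relies on producing an element of $K^{\times}$ with a single prescribed nonzero valuation among the primes above $p$, which is where the approximation theorem enters; the reverse containment, that no smaller denominator can appear, follows from the observation that the sum $\sum_{i} \nu_{\mathfrak{p}_i}(x)$ is confined to $\frac{1}{e}\mathbb{Z}$. Once the image is shown to be $\langle \frac{1}{p d_p}\rangle$ with $d_p \mid n$, everything reduces to a prime-by-prime comparison of $p$-heights, which the divisibility $d_p \mid n$ makes routine.
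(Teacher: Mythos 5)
Your proof follows essentially the same route as the paper's: identify $\ld_K(K^{\times})$ as the subgroup generated by the elements $\frac{1}{p\,e(p,K)\,g(p,K)} = \frac{f(p,K)}{np}$, compute the height sequence of $1$, and invoke Theorem \ref{theorem:ratiso}; your direct evaluation of the $q$-height as $\max_p \nu_q\big(p\,e(p,K)\,g(p,K)\big)$ is if anything more careful than the paper's rewriting of the generators as $1/p^{1+\nu_p(n)-\nu_p(f(p,K))}$. The one overclaim is your use of the approximation theorem to produce $x$ with prescribed valuations above $p$ and \emph{trivial valuation at all other primes} --- approximation controls only finitely many places, and such an $x$ exists only when the relevant ideal class is trivial; the paper makes the same identification of the image without justification, and the conclusion is unaffected since $h\cdot\langle \frac{1}{p e(p,K) g(p,K)}\rangle \subseteq \ld_K(K^{\times}) \subseteq \langle \frac{1}{p e(p,K) g(p,K)}\rangle$ for $h$ the class number, which perturbs the height sequence only at finitely many primes and by finite amounts.
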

\begin{proof}
Let $G := \langle \frac{1}{p} : p \in \mathbb{P} \rangle < \mathbb{Q}$. It is easy to see that
\[H_G = (1, 1, 1, \ldots).\]

Let $[K:\mathbb{Q}] = n$ and $\ordfp(x) := \ordfpp(x) e(p,K)$ be the normalized discrete valuation. For every $x \in K^{\times}$, we have
\[\ld_K(x) =  \sum_{\mathfrak{p} \mid x} \dfrac{\ordfpp(x)}{pg(p, K)} = \sum_{\mathfrak{p} \mid x} \dfrac{\ordfp(x)}{pg(p, K)e(p, K)} = \dfrac{1}{n} \sum_{\mathfrak{p} \mid x} \dfrac{\ordfp(x)f(p, K)}{p}.\]
Therefore
\begin{align*}
\ld_K(K^{\times}) &= \Big\{ \dfrac{1}{n} \sum_{\mathfrak{p} \mid x} \dfrac{\ordfp(x)f(p, K)}{p} \mid x \in K^{\times} \Big\} \\
&= \Big\langle \dfrac{f(p,K)}{np} \mid p \in \mathbb{P} \Big\rangle \\
&= \Big\langle \dfrac{1}{p^{1+\ordp(n)-\ordp(f(p,K))}} \mid p \in \mathbb{P} \Big\rangle.
\end{align*}
For every $p \in \mathbb{P}$, we denote $m(p) := 1+\ordp(n)-\ordp(f(p,K))$. It is easy to see that
\[H_{\ld_K(K^{\times})} = (m(2), m(3), m(5), \ldots).\]
As $f(p,K) \mid n$, we know that $1 \leq m(p) < +\infty$. When $p > n$, we have $\ordp(n) = \ordp(f(p,K)) = 0$. This implies that $m(p) = 1$ for all except for finitely many primes. Hence $H_G$ and $H_{\ld_K(K^{\times})}$ only differ in finitely many indices, and in the case $H_{p_i,G}(1) \neq H_{p_i,\ld_K(K^{\times})}$, both of them are finite. Hence $\ld_K(K^{\times}) \cong G$ by Theorem \ref{theorem:ratiso}.
\end{proof}

To determine the exact image of $\ld_K$ in general is not easy. We give an example.

\begin{example}
Let $K = \mathbb{Q}(\sqrt{D})$ be a quadratic extension, where $D$ is a square free integer. We rewrite the formula of $\ld_K$ using the normalized discrete valuation $\ordfp = \ordfpp \cdot e(p, K)$
\[\ld_K(x) = \sum_{\mathfrak{p} \mid x} \dfrac{\ordfpp(x)}{pg(p, K)} = \sum_{\mathfrak{p} \mid x} \dfrac{\ordfp(x)}{pg(p, K)e(p, K)}  = \dfrac{1}{2} \sum_{\mathfrak{p} \mid x} \dfrac{\ordfp(x)f(p, K)}{p}.\]
It remains to determine when $2$ is inert in $K$, that is, $f(2, K) = 2$. Let $\Delta_K$ be the discriminant of $K$, that is, $\Delta_K = D$ if $D \equiv 1 \pmod 4$ and $\Delta_K =4D$ if $D \equiv 2, 3 \pmod 4$. Hence $\Delta_K \equiv 0, 1, 4, 5 \pmod 8$. We know that $\mathcal{O}_K = \mathbb{Z}[\frac{\Delta_K+\sqrt{\Delta_K}}{2}]$. The minimal polynomial of $\frac{\Delta_K+\sqrt{\Delta_K}}{2}$ is
\[(X- \frac{\Delta_K+\sqrt{\Delta_K}}{2})(X - \frac{\Delta_K-\sqrt{\Delta_K}}{2}) = X^2 - \Delta_K X + \dfrac{\Delta_K^2 - \Delta_K}{4}.\]
We discuss the cases based on the value of $\Delta_K \bmod 8$.
\begin{enumerate}
    \item If $\Delta_K \equiv 0 \pmod 8$, then $\Delta_K^2-\Delta_K \equiv 8 \pmod 8$. Hence $\frac{\Delta_K^2-\Delta_K}{4} \equiv 0 \pmod 2$. Therefore
    \[X^2 - \Delta_K X + \dfrac{\Delta_K^2 - \Delta_K}{4} \equiv X^2 \pmod 2,\]
    and $(2) = (2, \frac{\Delta_K + \sqrt{\Delta_K}}{2})^2$ is ramified in this case, that is, $e(2, K) = 2$.
    \item If $\Delta_K \equiv 1 \pmod 8$, then $\Delta_K^2-\Delta_K \equiv 1 - 1 \equiv 0 \pmod 8$. Hence $\frac{\Delta_K^2-\Delta_K}{4} \equiv 0 \pmod 2$. Therefore
    \[X^2 - \Delta_K X + \dfrac{\Delta_K^2 - \Delta_K}{4} \equiv X^2 + X \equiv X(X+1)\pmod 2,\]
    and $(2) = (2, \frac{\Delta_K + \sqrt{\Delta_K}}{2})(2, \frac{\Delta_K + \sqrt{\Delta_K}}{2}+1)$ is totally split in this case, that is, $g(2, K) = 2$.
    \item If $\Delta_K \equiv 4 \pmod 8$, then $\Delta_K^2-\Delta_K \equiv 0 - 4 \equiv 4 \pmod 8$. Hence $\frac{\Delta_K^2-\Delta_K}{4} \equiv 1 \pmod 2$. Therefore
    \[X^2 - \Delta_K X + \dfrac{\Delta_K^2 - \Delta_K}{4} \equiv X^2 + 1 \equiv (X+1)^2 \pmod 2,\]
    and $(2) = (2, \frac{\Delta_K + \sqrt{\Delta_K}}{2}+1)^2$ is ramified in this case, that is, $e(2, K) = 2$.
    \item If $\Delta_K \equiv 5 \pmod 8$, then $\Delta_K^2-\Delta_K \equiv 1 - 5 \equiv 4 \pmod 8$. Hence $\frac{\Delta_K^2-\Delta_K}{4} \equiv 1 \pmod 2$. Therefore
    \[X^2 - \Delta_K X + \dfrac{\Delta_K^2 - \Delta_K}{4} \equiv X^2 + X + 1 \pmod 2,\]
    which is irreducible. In this case, $2$ is inert, that is, $f(2, K) = 2$.
\end{enumerate}
If $\Delta_K \equiv 5 \pmod 8$, then $\Delta_K \equiv 1 \pmod 4$. In this case, $\Delta_K = D$ and thus $D \equiv 5 \pmod 8$. Therefore
 \begin{displaymath}
    \ld_K(K^{\times}) = \begin{cases}
        \langle 1/2, 1/3, 1/5,  \ldots, \rangle, & \text{if $D \equiv 5 \pmod 8$};\\
        \langle 1/4, 1/3, 1/5, \ldots, \rangle, & \text{otherwise}.
        \end{cases}
 \end{displaymath}
\end{example}

\section{$p$-adic Continuity and Discontinuity}

In this section, we study when arithmetic partial derivatives and arithmetic subderivatives are $p$-adically continuous and discontinuous. When they are continuous, we will also study if they are strictly differentiable. We first recall some definitions.

Let $K$ be a field and $\nu: K \to \mathbb{R} \cup \{+\infty\}$ be a discrete valuation. For all $x, y \in K$, we have $\nu(x+y) \geq \mathrm{min}\{\nu(x), \nu(y)\}$. An important property of $\nu$ that we will use repeatedly in this subsection is that if $\nu(x) \neq \nu(y)$, then $\nu(x+y) = \mathrm{min}\{\nu(x), \nu(y)\}$. If $c$ is a real number number between $0$ and $1$, then the discrete valuation $\nu$ induces an absolute value on $K$ as follows:
\begin{displaymath}
    |x|_{\nu} := \begin{cases}
        c^{\,\nu(x)}, & \text{if $x \neq 0$};\\
        0, & \text{if $x=0$}.
        \end{cases}
\end{displaymath}
We then have the formula $|x+y|_{\nu} \leq \textrm{max}\{|x|_{\nu},|y|_{\nu}\}$ and thus $|\cdot|$ is an ultrametic absolute value. The subset $\mathcal{O}_K = \{x \in K : \nu(x) \geq 0\}$ is a ring with the unique maximal ideal $\mathfrak{p} = \{x \in K : \nu(x) > 0\}$. Let $f: K \to K$ be a function. We say that $f$ is $\mathfrak{p}$-adically continuous at a point $x \in K$ if for every $\epsilon > 0$, there exists $\delta > 0$ such that for every $|y-x|_{\nu} < \delta$, we have $|f(y)-f(x)|_{\nu} < \epsilon$. Equivalently, to show that $f$ is $\mathfrak{p}$-adically continuous at $x$, it is enough to show that for every sequence $x_i$, 
\[\lim_{i \to +\infty} \nu(x - x_i) = +\infty \quad \mathrm{implies} \quad \lim_{i \to +\infty} \nu(f(x) - f(x_i)) = +\infty.\] 
On the contrary, to show that $f$ is $\mathfrak{p}$-adically discontinuous at $x$, it is enough to find one sequence $x_i$ such that 
\[\lim_{i \to +\infty} \nu(x - x_i) = +\infty \quad \mathrm{and} \quad \lim_{i \to +\infty} \nu(f(x) - f(x_i)) \neq +\infty.\]

Recall that $f$ is differentiable at a point $x$ if the difference quotients $(f(y)-f(x))/(y-x)$ have a limit as $y \to x$ ($y \neq x$) in the domain of $f$. When the absolute value of the domain is ultrametric, we study the so-called strict differentiability. For more details on $p$-adic analysis, we refer the reader to \cite{p_adic_analysis}.

\begin{definition} \label{definition:diff}
Let $K$ be a field equipped with an ultrametric absolute value $|\cdot|_{\nu}$. We say that $f : K \to K $ is \emph{strictly differentiable} at a point $x \in K$ (with respect to $|\cdot|_{\nu}$) if the difference quotients \[\Phi f(u,v) = \dfrac{f(u)-f(v)}{u-v}\] have a limit as $(u,v) \to (x,x)$ while $u$ and $v$ remaining distinct. Similarly, we say that $f$ is \emph{twice strictly differentiable} at a point $x$ if
\[\Phi_2 f(u,v,w) = \dfrac{\Phi f(u,w) - \Phi f(v,w)}{u-v}\]
tends to a limit as $(u,v,w) \to (x,x,x)$ while $u$, $v$, and $w$ remaining pairwise distinct.
\end{definition}

\subsection{Partial Derivative}
Let $K/\mathbb{Q}$ be a finite Galois extension of degree $n$. Let $p \in \mathbb{Q}$ be a rational prime and $\mathfrak{p}$ be a prime ideal in $\mathcal{O}_K$ such that $\mathfrak{p} \mid p$. The discrete valuation $\ordfpp$ that extends $\ordp$ defines an ultrametric absolute value on $K$ by
\[|x|_{\ordfpp} = \sqrt[\leftroot{-5}\uproot{10}n]{|N_{K_{\ordfpp}/\mathbb{Q}_p}(x)|_{\ordp}}.\]

\begin{theorem} \label{theorem:cont_partial}
Let $K$ be a number field and $\mathfrak{p}$ a prime ideal of $\mathcal{O}_K$. The arithmetic partial derivative $\dKp$ is $\mathfrak{p}$-adically continuous on $K$.
\end{theorem}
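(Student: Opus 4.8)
The goal is to show that $\dKp$ is $\mathfrak{p}$-adically continuous at every point of $K$, where $\dKp(x) = x\,\ordfpp(x)/(p\,g(p,K))$ for $x \neq 0$ and $\dKp(0)=0$. Since the global partial derivative differs from the local one only by the constant factor $1/g(p,K)$, I expect the argument to mirror the local case and to split into two regimes: continuity at nonzero points, and continuity at $0$. The natural tool is the sequential criterion from the preamble: fix $x$, take a sequence $x_i \to x$ (i.e. $\ordfpp(x-x_i) \to +\infty$), and show $\ordfpp(\dKp(x) - \dKp(x_i)) \to +\infty$.

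\textbf{Continuity at a nonzero point.}
First I would handle $x \neq 0$. The key observation is that $\ordfpp$ is \emph{locally constant} on $K^\times$: if $\ordfpp(x-x_i) > \ordfpp(x)$, then by the ultrametric equality for unequal valuations we get $\ordfpp(x_i) = \ordfpp(x)$. Hence for $i$ large enough, $\ordfpp(x_i) = \ordfpp(x) =: v$, so the scalar $c := v/(p\,g(p,K))$ is the same for $\dKp(x)$ and $\dKp(x_i)$. Then
\[
\dKp(x) - \dKp(x_i) = c\,(x - x_i),
\]
and taking valuations gives $\ordfpp(\dKp(x)-\dKp(x_i)) = \ordfpp(c) + \ordfpp(x-x_i) \to +\infty$ since $\ordfpp(c)$ is a fixed finite quantity. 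This settles the nonzero case cleanly; the crucial step is recognizing that once $x_i$ is close enough to a nonzero $x$, its valuation freezes, turning $\dKp$ into multiplication by a constant on the tail of the sequence.

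\textbf{Continuity at $0$.}
The remaining point is $x = 0$, where $\dKp(0)=0$. Here the valuation does \emph{not} freeze, so the argument is different and is where I expect the real work to lie. Take $x_i \to 0$, so $v_i := \ordfpp(x_i) \to +\infty$. Then
\[
\ordfpp(\dKp(x_i)) = \ordfpp\!\left(\frac{x_i\,v_i}{p\,g(p,K)}\right) = v_i + \ordfpp(v_i) - \ordfpp(p\,g(p,K)).
\]
The term $\ordfpp(p\,g(p,K))$ is a fixed constant, and $\ordfpp(v_i) \geq -\,\ordfpp(\cdot)$-type lower bounds coming from the fact that $v_i \in \frac{1}{e}\mathbb{Z}$; crucially $\ordfpp(v_i)$ can be negative but is bounded below by something like $-\log_p(\text{denominator}) $, and in any case $\ordfpp(v_i)/v_i \to 0$. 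The main point to make rigorous is that $v_i + \ordfpp(v_i) \to +\infty$: since $v_i \to +\infty$ dominates the comparatively slow growth (in absolute value) of $\ordfpp(v_i)$, the right-hand side tends to $+\infty$. Thus $\ordfpp(\dKp(x_i)) \to +\infty$, i.e. $\dKp(x_i) \to 0 = \dKp(0)$, establishing continuity at $0$. The one subtlety worth stating carefully is the bound on $|\ordfpp(v_i)|$ relative to $v_i$; granting that, continuity at $0$ follows, and combining the two cases completes the proof.
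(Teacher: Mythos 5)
Your proof is correct and follows essentially the same route as the paper: at a nonzero point the valuation of nearby elements freezes (the paper phrases this by writing the approximating sequence as $x_i x$ and applying the Leibniz rule, which amounts to your identity $\dKp(x)-\dKp(x_i)=c\,(x-x_i)$), and at $0$ one bounds $\ordfpp(\ordfpp(x_i))$ below by a fixed constant, which is exactly the paper's argument in Theorem \ref{theorem:cont_0} specialized to $T=\{\mathfrak{p}\}$. The only point you leave implicit is the non-Galois case, where $\dKp$ is defined via a Galois closure $L$ and equals $x\,\ordfpp(x)$ times the constant $g(\mathfrak{p},L)/(p\,g(p,L))$ rather than literally $x\,\ordfpp(x)/(p\,g(p,K))$; since your argument only uses that this factor is a fixed nonzero rational, it goes through unchanged.
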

\begin{proof}
Suppose $K/\mathbb{Q}$ is Galois. We first show that $\dKp$ is continuous at nonzero $x \in K$. Let $x_i$ be a sequence that converges to $x$ $\mathfrak{p}$-adically. Since $x \neq 0$, we can rename the sequence as $x_ix$ without loss of generality. As $i \to +\infty$, we know that
\[\ordfpp(x - x_ix) = \ordfpp(x) + \ordfpp(1-x_i) \to +\infty.\]
This implies that $\ordfpp(1-x_i) \to +\infty$ as $i \to +\infty$. As a result, we also know that $\ordfpp(x_i) = 0$ when $i \gg 0$ because if $\ordfpp(x_i) \neq 0$, then $\ordfpp(1-x_i) = \textrm{min}\{\ordfpp(1), \ordfpp(x_i)\} = 0$. Therefore $\dKp(x_i) = 0$ when $i \gg 0$. To show that $\dKp(x_i)$ converges to $\dKp(x)$ $\mathfrak{p}$-adically, it is enough to observe that 
\begin{align*}
\ordfpp(\dKp(x) - \dKp(x_ix)) &= \ordfpp(\dKp(x) - \dKp(x)x_i - \dKp(x_i)x)\\
&= \ordfpp(\dKp(x)(1-x_i)) \\
&= \ordfpp(\dKp(x)) + \ordfpp(1-x_i) \to +\infty
\end{align*}
as $i \to +\infty$. The case $x = 0$ will be covered in Theorem \ref{theorem:cont_0}.

Suppose $K/\mathbb{Q}$ is a number field, not necessarily Galois. Let $L/K$ be a finite extension such that $L/\mathbb{Q}$ is Galois. Let $\mathfrak{P}$ be a prime ideal of $\mathcal{O}_L$ such that $\mathfrak{P} \mid \mathfrak{p}$. By the previous paragraph, we know that $D_{L,\mathfrak{P}}$ is $\mathfrak{P}$-adically continuous on $L$ (and thus on $K$). Let $x_i \in K$ be a sequence that converges to $x \in K$ $\mathfrak{p}$-adically. Since $\ordfpp(y) = \nu_{\mathfrak{P}}(y)$ for all $y \in K$, we know that $x_i$ converges to $x$ $\mathfrak{P}$-adically. As $D_{L,\mathfrak{P}}$ is $\mathfrak{P}$-adically continuous on $L$, we know that $D_{L,\mathfrak{P}}(x_i)$ converges to $D_{L,\mathfrak{P}}(x)$ $\mathfrak{P}$-adically, and thus $\mathfrak{p}$-adically. This shows that $D_{L,\mathfrak{P}}$ is $\mathfrak{p}$-adically continuous on $K$. Let $T = \{\mathfrak{p}\}$. We know that by definition $\dKp(x) = D_{L,T_{L/K}}(x) = \sum_{\mathfrak{P} \mid \mathfrak{p}} D_{L,\mathfrak{P}}$. This implies that $\dKp$ is continuous on $K$.
\end{proof}

Since $\dKp$ is $\mathfrak{p}$-adically continuous on $K$, the next question is whether $\dKp$ is strictly  differentiable on $K$ with respect to the ultrametric $|\cdot|_{\ordfpp}$. 

\begin{theorem} \label{theorem:local_diff}
Let $K$ be a number field and $\mathfrak{p}$ a prime ideal of $\mathcal{O}_K$. The arithmetic partial derivative $\dKp$ is strictly differentiable and twice strictly differentiable (with respect to the ultrametric $|\cdot|_{\ordfpp}$) at every nonzero $x \in K$.
\end{theorem}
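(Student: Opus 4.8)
The plan is to exploit the fact that, away from $0$, the valuation $\ordfpp$ is locally constant, so that $\dKp$ agrees with a \emph{linear} function on a small ball around any nonzero point; strict differentiability and its second-order analogue then follow immediately from the corresponding (trivial) statements for linear maps. First I would reduce to the case where $K/\mathbb{Q}$ is Galois, exactly as in the proof of Theorem \ref{theorem:cont_partial}: passing to a Galois extension $L/\mathbb{Q}$ containing $K$ and a prime $\mathfrak{P}\mid\mathfrak{p}$ of $\mathcal{O}_L$ does not change the $\mathfrak{p}$-adic distance on $K$, and $\dKp$ restricted to $K$ is a finite sum of the maps $D_{L,\mathfrak{P}}$; since a finite $K$-linear combination of strictly differentiable functions is strictly differentiable, it suffices to treat the Galois case, where $\dKp(u)=u\,\ordfpp(u)/(pg(p,K))$ for every nonzero $u$.

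The core observation is the local constancy of $\ordfpp$ away from $0$. Fix a nonzero $x\in K$ and set $m:=\ordfpp(x)$ and $c:=m/(pg(p,K))$. If $\ordfpp(u-x)>m$, then the ultrametric equality gives $\ordfpp(u)=\ordfpp\big(x+(u-x)\big)=\min\{\ordfpp(x),\ordfpp(u-x)\}=m$. Hence on the ball $B:=\{u\in K:\ordfpp(u-x)>m\}$ the valuation $\ordfpp$ is identically $m$, and therefore $\dKp(u)=cu$ on $B$; that is, $\dKp$ coincides near $x$ with the linear map of slope $c$.

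Next I would evaluate the difference quotients on $B$. For distinct $u,v\in B$,
\[
\Phi\dKp(u,v)=\frac{cu-cv}{u-v}=c,
\]
which is constant, so $\Phi\dKp(u,v)\to c$ as $(u,v)\to(x,x)$; this is exactly strict differentiability at $x$, with derivative $c=m/(pg(p,K))$. For the second-order statement, for pairwise distinct $u,v,w\in B$ both inner quotients equal $c$, whence
\[
\Phi_2\dKp(u,v,w)=\frac{\Phi\dKp(u,w)-\Phi\dKp(v,w)}{u-v}=\frac{c-c}{u-v}=0,
\]
which tends to $0$ as $(u,v,w)\to(x,x,x)$. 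Thus $\dKp$ is twice strictly differentiable at $x$ with vanishing second derivative.

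I do not expect a genuine obstacle here: once the local-linearity observation is in place, no limit need actually be computed, since the difference quotients are literally constant in a punctured neighbourhood of $x$. The only points requiring a little care are the reduction to the Galois case and checking that replacing $(K,\mathfrak{p})$ by $(L,\mathfrak{P})$ preserves the ambient topology on $K$. This should be contrasted with the behaviour at $0$, treated separately in Theorem \ref{theorem:local_not_diff_0}: every neighbourhood of $0$ meets infinitely many level sets of $\ordfpp$, so the local-constancy trick fails there and strict differentiability breaks down.
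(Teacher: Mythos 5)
Your proposal is correct and follows essentially the same route as the paper's proof: both reduce to the Galois setting by writing $\dKp$ on $K$ as the finite sum $\sum_{\mathfrak{P}\mid\mathfrak{p}} D_{L,\mathfrak{P}}$, both use the local constancy of the valuation near a nonzero point to make the difference quotients eventually constant and equal to $\dKp(x)/x$, and both conclude that the second-order quotients vanish. The only cosmetic difference is that you package the key step as ``$\dKp$ is linear on a ball around $x$,'' whereas the paper phrases it as $\nu_{\mathfrak{P}}(u_i)=\nu_{\mathfrak{P}}(v_i)=\nu_{\mathfrak{P}}(x)$ for $i\gg 0$; these are the same observation.
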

\begin{proof}
Let $L/K$ be a finite extension such that $L/\mathbb{Q}$ is Galois. Let $T = \{\mathfrak{p}\}$. We have $\dKp(x) = D_{L,T_{L/K}}(x) = \sum_{\mathfrak{P} \mid \mathfrak{p}} D_{L,\mathfrak{P}}$.

We first show that $\dKp$ is strictly differentiable at $x \neq 0$. Suppose a sequence $(u_i, v_i)$ converges to $(x, x)$ $\mathfrak{p}$-adically while $u_i$ and $v_i$ remaining distinct. This implies that $(u_i,v_i)$ converges to $(x,x)$ $\mathfrak{P}$-adically. When $i \gg 0$, we have $\nu_{\mathfrak{P}}(u_i) = \nu_{\mathfrak{P}}(v_i) = \nu_{\mathfrak{P}}(x)$.
We can compute
\begin{align*}
\Phi \dKp(u_i,v_i) &= \dfrac{\dKp(u_i)-\dKp(v_i)}{u_i-v_i} = \dfrac{\sum_{\mathfrak{P} \mid \mathfrak{p}} D_{L, \mathfrak{P}}(u_i) - \sum_{\mathfrak{P} \mid \mathfrak{p}} D_{L, \mathfrak{P}}(v_i)}{u_i-v_i} \\
&= \dfrac{ \sum_{\mathfrak{P} \mid \mathfrak{p}} \frac{u_i\nu_{\mathfrak{P}}(x)}{pg(p,L)} - \sum_{\mathfrak{P} \mid \mathfrak{p}} \frac{v_i\nu_{\mathfrak{P}}(x)}{pg(p,L)}}{u_i-v_i} = \sum_{\mathfrak{P} \mid \mathfrak{p}} \frac{\nu_{\mathfrak{P}}(x)}{pg(p,L)} = \dfrac{\dKp(x)}{x}.
\end{align*}
Therefore the limit of $\Phi \dKp(u_i, v_i)$ is equal to $\dKp(x)/x$ as $i \to +\infty$. This shows that $\dKp$ is strictly differentiable at any nonzero $x \in K$, and the derivative of $\dKp$ is a constant function, defined by
\[(\dKp)'(x) = \dKp(x)/x = \ldKp(x).\]
We then show that $\dKp$ is twice strictly differentiable at nonzero points. Suppose a sequence $(u_i, v_i, w_i)$ converges to $(x,x,x)$ ${\mathfrak{p}}$-adically while $u_i$, $v_i$, and $w_i$ remaining pairwise distinct. Then for all $i \gg 0$, we have
\[\Phi_2 \dKp(u_i,v_i,w_i) = \dfrac{\Phi \dKp(u_i,w_i) - \Phi \dKp(v_i, w_i)}{u_i - v_i} = \dfrac{0}{u_i-v_i} = 0.\]
Hence $\dKp$ is twice strictly differentiable at nonzero points and the second derivative is the constant zero function.
\end{proof}

\begin{theorem} \label{theorem:local_not_diff_0}
Let $K$ be a number field and $\mathfrak{p}$ a prime ideal of $\mathcal{O}_K$. The arithmetic partial derivative $\dKp$ is not strictly differentiable (with respect to the ultrametric $|\cdot|_{\ordfpp}$) at $0$.
\end{theorem}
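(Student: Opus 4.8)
The plan is to refute strict differentiability at $0$ using only the restricted family of difference quotients in which one of the two variables is held at $0$. If $\dKp$ were strictly differentiable at $0$, then $\Phi\dKp(u,v)$ would tend to a single limit $L$ as $(u,v)\to(0,0)$ with $u\neq v$; in particular, specializing $v=0$ and letting $u\to0$ through nonzero values, the numbers $\Phi\dKp(u,0)$ would have to converge to $L$. So it suffices to exhibit two sequences $u_i\to0$ and $u_i'\to0$ of nonzero elements of $K$ for which $\Phi\dKp(u_i,0)$ and $\Phi\dKp(u_i',0)$ converge to two \emph{distinct} limits.

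First I would evaluate the restricted quotient. Since $\dKp(0)=0$ and, for nonzero $u$, $\dKp(u)=u\,\ordfpp(u)/(p\,g(p,K))$, we obtain
\[
\Phi\dKp(u,0)=\frac{\dKp(u)-\dKp(0)}{u-0}=\frac{\dKp(u)}{u}=\frac{\ordfpp(u)}{p\,g(p,K)}.
\]
The decisive observation is that although $u\to0$ forces $\ordfpp(u)\to+\infty$ as a real number, the $\mathfrak{p}$-adic size of $\ordfpp(u)/(p\,g(p,K))$ is governed by $\ordp(\ordfpp(u))$, not by the magnitude of $\ordfpp(u)$, and these two quantities can be steered independently.

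Concretely, writing $g:=g(p,K)$, I would take $u_i:=p^{\,p^i}$, so that $\ordfpp(u_i)=p^i\to+\infty$ (hence $u_i\to0$) and
\[
\Phi\dKp(u_i,0)=\frac{p^i}{p\,g}=\frac{p^{\,i-1}}{g},
\]
which satisfies $\ordfpp\bigl(p^{\,i-1}/g\bigr)=(i-1)-\ordp(g)\to+\infty$, so $\Phi\dKp(u_i,0)\to0$. For the second sequence I would take $u_i':=p^{\,p^i+1}$, so that $\ordfpp(u_i')=p^i+1\to+\infty$ (hence $u_i'\to0$), while $\ordfpp(u_i')$ stays prime to $p$, and
\[
\Phi\dKp(u_i',0)=\frac{p^i+1}{p\,g}\longrightarrow\frac{1}{p\,g}
\]
$\mathfrak{p}$-adically, because $p^i\to0$. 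Since $1/(p\,g)\neq0$, the two families of difference quotients tend to different limits, so $\lim_{(u,v)\to(0,0)}\Phi\dKp(u,v)$ cannot exist and $\dKp$ fails to be strictly differentiable at $0$.

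The argument is short, so there is no serious obstacle; the one point requiring care is to keep the two roles of $\ordfpp(u)$ cleanly separated---its archimedean growth, which guarantees $u_i,u_i'\to0$, versus its $p$-adic valuation $\ordp(\ordfpp(u))$, which decides whether the quotient is $\mathfrak{p}$-adically small. Choosing exponents $p^i$ and $p^i+1$ is exactly what decouples these: the first makes $\ordp(\ordfpp(u_i))\to\infty$ and drives the quotient to $0$, while the second keeps $\ordfpp(u_i')\equiv1\pmod p$ and pins the quotient to $1/(p\,g)$. I would also remark that the definition of $\dKp$ via a Galois closure does not affect this computation, since $\dKp(u)/u=\ordfpp(u)/(p\,g(p,K))$ holds for every nonzero $u\in K$ regardless of whether $K/\mathbb{Q}$ is Galois.
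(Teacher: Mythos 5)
Your proof is correct, but it takes a genuinely different route from the paper. The paper deduces Theorem \ref{theorem:local_not_diff_0} as a corollary of the more general Theorem \ref{theorem:not_diff} on subderivatives, whose proof uses the pairs $(u_i,v_i)=(p^{i+1},p^{i})$ with both entries nonzero and shows that $\ordfpp\bigl(\Phi D_{K,T}(u_i,v_i)\bigr)$ oscillates between two values according to whether $p\mid i$, so the quotients are not Cauchy. You instead specialize to pairs $(u,0)$, which collapses the difference quotient to the logarithmic derivative $\dKp(u)/u$, a constant multiple of $\ordfpp(u)$, and then exploit the fact that the archimedean size of $\ordfpp(u)$ (which controls $u\to0$) and its $p$-adic valuation (which controls the size of the quotient) can be decoupled; the exponents $p^i$ and $p^i+1$ produce two admissible subsequences whose quotients converge $\mathfrak{p}$-adically to the distinct limits $0$ and $1/(pg)$. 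This is arguably cleaner for the partial-derivative case, since it exhibits two genuine limits rather than arguing non-convergence via bounded oscillation of valuations; the paper's choice buys generality, as the same computation handles arbitrary $T$. One small imprecision in your closing remark: for non-Galois $K$ the paper's definition gives $\dKp(u)/u=g(\mathfrak{p},L)\,\ordfpp(u)/\bigl(p\,g(p,L)\bigr)$ for a Galois closure $L$, which need not equal $\ordfpp(u)/\bigl(p\,g(p,K)\bigr)$; but since this is still a fixed nonzero rational constant times $\ordfpp(u)$, your two limits remain $0$ and a nonzero number, and the argument is unaffected.
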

\begin{proof}
This theorem is a direct corollary of a more generalized Theorem \ref{theorem:not_diff}.
\end{proof}

\begin{remark}
Theorems \ref{theorem:cont_partial}, \ref{theorem:local_diff}, and \ref{theorem:local_not_diff_0} hold in the local case of finite extensions over $\mathbb{Q}_p$.
\end{remark}

\subsection{Subderivative}

\begin{theorem} \label{theorem:cont_0}
Let $K/\mathbb{Q}$ be a number field and $\mathfrak{p}$ be a prime ideal of $\mathcal{O}_K$. Let $T$ be a nonempty set of prime ideals in $\mathcal{O}_K$. The arithmetic subderivative $D_{K,T}$ is $\mathfrak{p}$-adically continuous at $x = 0$.
\end{theorem}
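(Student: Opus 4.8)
The plan is to use the sequential criterion for continuity recalled just before the statement. Since $D_{K,T}(0)=0$, I must show that whenever a sequence $x_i \in K$ satisfies $\ordfpp(x_i) \to +\infty$, then $\ordfpp(D_{K,T}(x_i)) \to +\infty$ as well. I first treat the case where $K/\mathbb{Q}$ is Galois, so that the ramification index $e(q,K)$, inertia degree $f(q,K)$, and decomposition number $g(q,K)$ depend only on the rational prime $q$ and satisfy $e(q,K)f(q,K)g(q,K)=n$, where $n=[K:\mathbb{Q}]$. Writing $D_{K,T}(x_i)=x_i\,S_i$ with
\[S_i := \sum_{\mathfrak{q}\in T,\ \mathfrak{q}\mid q} \frac{\ordfqq(x_i)}{q\,g(q,K)},\]
the key preliminary observation is that $S_i \in \mathbb{Q}$, so that $\ordfpp(S_i)=\ordp(S_i)$. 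Consequently
\[\ordfpp(D_{K,T}(x_i)) = \ordfpp(x_i) + \ordp(S_i),\]
and it suffices to bound $\ordp(S_i)$ from below by a constant independent of $i$.

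The heart of the argument is to bound each summand and then invoke the ultrametric inequality $\ordp(S_i) \geq \min_{\mathfrak{q}} \ordp(\text{term})$. Rewriting $\ordfqq(x_i) = \overline{\nu}_{\mathfrak{q}}(x_i)/e(q,K)$, each term becomes $\overline{\nu}_{\mathfrak{q}}(x_i)\big/\bigl(q\,g(q,K)\,e(q,K)\bigr)$. Since $g(q,K)e(q,K) \mid n$, the denominator divides $qn$, so its $\ordp$ is at most $1+\ordp(n)$; meanwhile, for the nonzero terms the numerator $\overline{\nu}_{\mathfrak{q}}(x_i)$ is a nonzero integer and hence has $\ordp \geq 0$. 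Therefore each nonzero term has $\ordp \geq -(1+\ordp(n)) =: -C$, and by the ultrametric inequality $\ordp(S_i) \geq -C$ uniformly in $i$. This gives $\ordfpp(D_{K,T}(x_i)) \geq \ordfpp(x_i) - C \to +\infty$, as desired.

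The step I expect to be the main obstacle — and the one requiring care — is precisely the \emph{uniformity} of this bound: the number of nonzero summands in $S_i$ may grow without bound as $i \to +\infty$, so a naive term-by-term estimate for a finite sum would not suffice. The resolution is that the ultrametric inequality $\ordp(\sum) \geq \min$ holds regardless of the number of summands, and the only sources of negative valuation ($q$, $g(q,K)$, $e(q,K)$) are all bounded by $n$, hence contribute a bound depending only on $K$ and $p$. This is what turns the pointwise estimate into a uniform one.

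Finally, for a general (not necessarily Galois) number field $K$, I reduce to the Galois case exactly as in the proof of Theorem \ref{theorem:cont_partial}: choose a finite extension $L/K$ with $L/\mathbb{Q}$ Galois and a prime ideal $\mathfrak{P}\mid\mathfrak{p}$ of $\mathcal{O}_L$, and use $D_{K,T}(x) = D_{L,T_{L/K}}(x)$ together with $\ordfpp(y) = \ordfPP(y)$ for all $y \in K$. Continuity of $D_{L,T_{L/K}}$ at $0$ from the Galois case then yields continuity of $D_{K,T}$ at $0$, completing the proof.
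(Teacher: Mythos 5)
Your proposal is correct and follows essentially the same route as the paper: write $D_{K,T}(x_i)=x_iS_i$ with $S_i\in\mathbb{Q}$, pass to normalized (integer-valued) valuations so that $efg=n$ bounds the $p$-adic valuation of each denominator by $1+\ordp(n)$, apply the ultrametric inequality to get a lower bound on $\ordp(S_i)$ uniform in $i$, and reduce the non-Galois case to the Galois one via $D_{K,T}=D_{L,T_{L/K}}$. The only cosmetic difference is that the paper performs the computation directly in the Galois closure $L$ rather than treating the Galois case separately first.
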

\begin{proof}
Let $L/K$ be a finite extension such that $L/\mathbb{Q}$ is Galois. Suppose $x_i \in K$ is a sequence that converges to $x$ $\mathfrak{p}$-adically in $K$. Let $\mathfrak{P}$ be a prime ideal of $\mathcal{O}_L$ such that $\mathfrak{P} \mid \mathfrak{p}$. Then $x_i$ converges to $x$ $\mathfrak{P}$-adically in $L$. Hence
\[\lim_{i \to +\infty} \nu_{\mathfrak{P}}(x - x_i) = \lim_{i \to +\infty} \nu_{\mathfrak{P}}(x_i) = +\infty.\]
We have
\begin{align*}
  \ordfpp(D_{K,T}(x_i)) &= \nu_{\mathfrak{P}}(D_{L,T_{L/K}}(x_i)) \\
  &= \nu_{\mathfrak{P}}\Big(x_i \sum_{\mathfrak{Q} \in T_{L/K}, \mathfrak{Q} \mid q} \dfrac{\nu_{\mathfrak{Q}}(x_i)}{qg(q,L)}  \Big)\\
  &= \nu_{\mathfrak{P}}(x_i) + \nu_{\mathfrak{P}}\Big( \sum_{\mathfrak{Q} \in T_{L/K}, \mathfrak{Q} \mid q} \dfrac{\nu_{\mathfrak{Q}}(x_i)}{qg(q,L)}  \Big) \\  
  &= \nu_{\mathfrak{P}}(x_i) + \nu_{\mathfrak{P}}\Big(\dfrac{1}{[L:\mathbb{Q}]}\sum_{\mathfrak{Q} \in T_{L/K}, \mathfrak{Q} \mid q} \dfrac{\nu_{\mathfrak{Q}}(x_i)e(q,L)f(q,L)}{q}  \Big) \\
  &\geq \nu_{\mathfrak{P}}(x_i) - \nu_{\mathfrak{P}}([L:\mathbb{Q}])) - \nu_{\mathfrak{P}}(\prod_{\mathfrak{Q} \in T_{L/K}, \mathfrak{Q} \mid q}q).
\end{align*}
As $\displaystyle \lim_{i \to +\infty} \nu_{\mathfrak{P}}(x_i) = +\infty$, we have 
\[\displaystyle \lim_{i \to _+\infty} \ordfpp(D_{K,T}(x) - D_{K,T}(x_i)) = +\infty. \qedhere\]
\end{proof}

\begin{corollary} \label{corollary:generalization1}
Let $T$ be a nonempty set of (rational) prime numbers. The arithmetic subderivative $D_{\mathbb{Q},T}$ is $p$-adically continuous at $x = 0$.
\end{corollary}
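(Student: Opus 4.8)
The plan is to obtain this statement as the specialization of Theorem~\ref{theorem:cont_0} to the base field $K = \mathbb{Q}$. First I would observe that $\mathbb{Q}$ is itself a number field, with ring of integers $\mathcal{O}_{\mathbb{Q}} = \mathbb{Z}$, and that since $\mathbb{Z}$ is a principal ideal domain every nonzero prime ideal has the form $(q)$ for a unique rational prime $q$. Thus a nonempty set $T$ of rational primes is naturally identified with the nonempty set of prime ideals $\{(q) : q \in T\}$ of $\mathcal{O}_{\mathbb{Q}}$, and fixing a rational prime $p$ amounts to fixing the prime ideal $\mathfrak{p} = (p)$.

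Next I would check that under this identification the general subderivative $D_{K,T}$ of the paper coincides with the classical $D_{\mathbb{Q},T}$ from the introduction. For $K = \mathbb{Q}$ there is exactly one prime ideal lying over each rational prime $q$, so $g(q,\mathbb{Q}) = 1$, and the defining formula $D_{K,T}(x) = x \sum_{\mathfrak{p} \in T,\, \mathfrak{p}\mid p} \ordfpp(x)/(p\,g(p,K))$ collapses to $x \sum_{q \in T} \ordp(x)/q$, which is precisely $D_{\mathbb{Q},T}(x)$. Likewise the valuation $\ordfpp$ attached to $\mathfrak{p} = (p)$ is just the rational $p$-adic valuation $\ordp$, so the $\mathfrak{p}$-adic topology on $\mathbb{Q}$ is the usual $p$-adic topology and the two notions of continuity at $0$ agree.

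With these identifications in place, the conclusion is immediate: Theorem~\ref{theorem:cont_0} guarantees that $D_{K,T}$ is $\mathfrak{p}$-adically continuous at $0$ for every number field $K$ and every prime ideal $\mathfrak{p}$, and applying it to $K = \mathbb{Q}$ and $\mathfrak{p} = (p)$ yields exactly the $p$-adic continuity of $D_{\mathbb{Q},T}$ at $0$. There is essentially no genuine obstacle here beyond the bookkeeping of the specialization; the only point requiring any care is confirming that the general definition really does reduce to the classical one when $K = \mathbb{Q}$, which is the content of the second paragraph.
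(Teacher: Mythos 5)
Your proposal is correct and matches the paper's approach exactly: the corollary is obtained by specializing Theorem~\ref{theorem:cont_0} to $K = \mathbb{Q}$ and $\mathfrak{p} = (p)$, with the observation that $g(q,\mathbb{Q}) = 1$ makes the general formula for $D_{K,T}$ collapse to the classical $D_{\mathbb{Q},T}$. Your second paragraph makes explicit the bookkeeping that the paper leaves implicit, which is a reasonable addition but not a different argument.
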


\begin{theorem} \label{theorem:not_diff}
Let $K/\mathbb{Q}$ be a number field and $\mathfrak{p}$  a prime ideal of $\mathcal{O}_K$. Let $T$ be a nonempty set of prime ideals in $\mathcal{O}_K$. The arithmetic subderivative $D_{K,T}: K \to K$ is not strictly differentiable (with respect to the ultrametric $|\cdot|_{\ordfpp}$) at $0$.
\end{theorem}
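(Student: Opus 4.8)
The plan is to show that the difference quotient $\Phi D_{K,T}(u,v)$ has no limit as $(u,v)\to(0,0)$ (with $u\neq v$) by producing two sequences of pairs, both converging $\mathfrak{p}$-adically to $(0,0)$, along which $\Phi D_{K,T}$ tends to two distinct limits. First I would reduce to the Galois case: pick a finite extension $L/K$ with $L/\mathbb{Q}$ Galois and a prime $\mathfrak{P}\mid\mathfrak{p}$ of $\mathcal{O}_L$; then $D_{K,T}(x)=D_{L,T_{L/K}}(x)$ for $x\in K$ and $\ordfpp$ agrees with $\nu_{\mathfrak{P}}$ on $K$, so it suffices to exhibit the desired sequences inside $\mathbb{Q}\subseteq K\subseteq L$, where the explicit formula for $D_{K,T}$ is available. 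Next I would simplify by always taking $v=0$: since $D_{K,T}(0)=0$, for $u\neq 0$ we have
\[
\Phi D_{K,T}(u,0)=\frac{D_{K,T}(u)}{u}=\sum_{\mathfrak{q}\in T,\ \mathfrak{q}\mid q}\frac{\ordfqq(u)}{q\,g(q,K)},
\]
the logarithmic subderivative of $u$. Thus the task reduces to finding $u_i,u_i'\to 0$ $\mathfrak{p}$-adically whose logarithmic subderivatives have different $\mathfrak{p}$-adic limits.

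The key computation is on rational prime powers. Writing $p$ for the rational prime beneath $\mathfrak{p}$, one has $\ordfpp(p^a)=a$, so $p^a\to 0$ $\mathfrak{p}$-adically as $a\to+\infty$; and by the homomorphism property, $D_{K,T}(p^a)/p^a=a\,c$, where $c:=D_{K,T}(p)/p=\sum_{\mathfrak{q}\in T,\ \mathfrak{q}\mid p}\tfrac{1}{p\,g(p,K)}$. Everything then hinges on whether $c\neq 0$, i.e.\ whether some prime of $T$ lies above $p$. When $c\neq 0$, I would take $u_i=p^{p^i}$ and $u_i'=p^{1+p^i}$. Both tend to $0$ $\mathfrak{p}$-adically since their $\ordfpp$-values $p^i$ and $1+p^i$ go to $+\infty$, while their logarithmic subderivatives are $p^i c$ and $(1+p^i)c$. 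Since $\ordfpp(p^i c)=i+\ordfpp(c)\to+\infty$, the first tends to $0$ and the second to $c\neq 0$, giving two different limits.

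When $c=0$ (no prime of $T$ lies over $p$), powers of $p$ are invisible to $D_{K,T}$, so the naive test $u_i=p^{a_i}$ yields logarithmic subderivative identically $0$. To manufacture a nonzero limit I would bring in an auxiliary prime $\mathfrak{q}\in T$ lying over a rational prime $q$; here necessarily $q\neq p$, so $\ordfpp(q)=0$. Setting $u_i=p^i$ and $u_i'=p^i q$, both converge to $0$ $\mathfrak{p}$-adically because $\ordfpp(p^i)=\ordfpp(p^i q)=i\to+\infty$. Since $c=0$ we get $D_{K,T}(u_i)/u_i=0$, while $D_{K,T}(u_i')/u_i'=D_{K,T}(q)/q=\tfrac{\#\{\mathfrak{q}'\in T:\mathfrak{q}'\mid q\}}{q\,g(q,K)}\neq 0$, again two different limits. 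In either case $\Phi D_{K,T}$ has no limit at $(0,0)$, so $D_{K,T}$ is not strictly differentiable at $0$.

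I expect the case $c=0$ to be the crux. Because the valuation $\ordfpp$ that governs the topology need not interact with any prime of $T$, one cannot detect the failure of differentiability from the $\mathfrak{p}$-adic size of $D_{K,T}(u)$ alone; the fix is to use a prime of $T$ transverse to $\mathfrak{p}$ to produce the second, nonzero limiting value. The remaining steps — verifying $\mathfrak{p}$-adic convergence of the rational test sequences and the limits of their logarithmic subderivatives — are routine valuation estimates.
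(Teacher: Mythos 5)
Your proof is correct, and while it follows the same broad outline as the paper's (reduce to a Galois extension $L/K$, then split into the two cases according to whether $T$ contains a prime above $p$), the key computation is genuinely different and arguably cleaner. The paper tests the pairs $(u_i,v_i)=(p^{i+1},p^i)$ (resp.\ $((pq)^{i+1},(pq)^i)$ in the second case) and certifies non-convergence by showing that $\ordfpp\bigl(\Phi D_{K,T}(u_i,v_i)\bigr)$ oscillates between two values according to whether $p\mid i$, which requires a somewhat delicate valuation analysis of expressions like $\tfrac{(i+1)p-i}{p^2-p}$. You instead fix $v=0$, which turns the difference quotient into the logarithmic subderivative $D_{K,T}(u)/u$ and lets you exploit its homomorphism property: $\Phi D_{K,T}(p^a,0)=a\,c$ with $c=D_{K,T}(p)/p$, so choosing $a=p^i$ versus $a=1+p^i$ (when $c\neq 0$), or adjoining a factor of $q$ (when $c=0$), produces two sequences of admissible pairs with two explicitly distinct limits ($0$ versus $c$, or $0$ versus $D_{K,T}(q)/q$). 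This trades the oscillation argument for a two-subsequence argument and avoids any case analysis on $i$; the only cost is a small notational care in the non-Galois case, where the explicit formula you quote should be read in $L$ with $T_{L/K}$ and $g(q,L)$, though this does not affect the structure of the argument since $c\neq 0$ precisely when $T$ (equivalently $T_{L/K}$) contains a prime above $p$.
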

\begin{proof}
Let $L/K$ be a finite extension such that $L/\mathbb{Q}$ is Galois. Let $\mathfrak{P}$ be a prime ideal of $\mathcal{O}_L$ such that $\mathfrak{P} \mid \mathfrak{p} \mid p$. 

We prove this theorem in two cases. First, we assume that there exists a prime ideal $\mathfrak{p}' \in T$ such that $\mathfrak{p}' \mid p$. Let $m_p$ be the number of prime ideals in $T_{L/K}$ that divide $p$. For positive integer $i \geq 1$, define $u_i = p^{i+1}, v_i = p^{i}$. It is clear that $u_i \neq v_i$ and $(u_i, v_i)$ converges to $(0,0)$ $\mathfrak{p}$-adically. We can compute the difference quotient
\begin{align*}
\Phi D_{K,T} (u_i, v_i) &= \dfrac{D_{K,T}(u_i) - D_{K,T}(v_i)}{u_i - v_i} = \dfrac{D_{L,T_{L/K}}(u_i) - D_{L,T_{L/K}}(v_i)}{u_i-v_i} \\
&= \dfrac{\frac{(i+1)p^{i+1}m_p}{pg(p,L)} - \frac{ip^{i}m_p}{pg(p,L)}}{p^{i+1}-p^i} = \dfrac{m_p}{g(p,L)}\dfrac{(i+1)p-i}{p^2-p}.
\end{align*}
The $\mathfrak{p}$-adic valuation of $\Phi D_{K,T} (u_i, v_i)$ is greater than or equal to $\ordfpp(m_p) - \ordfpp(g(p,L))$ if $p \mid i$ and is equal to $\ordfpp(m_p) - \ordfpp(g(p,L)) -1 $ if $p \mid i$. Hence $\Phi D_{K,T} (u_i, v_i)$ does not have a limit as the sequence $(u_i,v_i) \to (0,0)$.

Second, we assume that there does not exist a prime ideal $\mathfrak{p}' \in T$ such that $\mathfrak{p}' \mid p$. Let $\mathfrak{q} \in T$ be such that $\mathfrak{q} \nmid p$ and $\mathfrak{Q} \in T_{L/K}$ such that $\mathfrak{Q} \mid \mathfrak{q} \mid q$. Let $m_q$ be the number of prime ideals in $T_{L/K}$ that divide $q$. For positive integer $i \geq 1$, define $u_i = (pq)^{i+1}, v_i = (pq)^i$. It is clear that $u_i \neq v_i$ and $(u_i, v_i)$ converges to $(0,0)$ $\mathfrak{p}$-adically. We can compute the difference quotient
\begin{align*}
\Phi D_{K,T} (u_i, v_i) &= \dfrac{D_{K,T}(u_i) - D_{K,T}(v_i)}{u_i - v_i} = \dfrac{D_{L,T_{L/K}}(u_i) - D_{L,T_{L/K}}(v_i)}{u_i-v_i} \\
&= \dfrac{\frac{(i+1)(pq)^{i+1}m_q}{qg(q,K)} - \frac{i(pq)^im_q}{qg(q,K)}}{(pq)^{i+1}-(pq)^{i}} = \frac{m_q}{g(q,K)} \dfrac{(i+1)pq-i}{pq^2-q}.
\end{align*}
The $\mathfrak{p}$-adic valuation of $\Phi D_{K,T} (u_i, v_i)$ is greater than or equal to $\ordfpp(m_q) - \ordfpp(g(q,K)) + 1$ if $p \mid i$ and is equal to $\ordfpp(m_q) - \ordfpp(g(q,K))$ if $p \mid i$. Hence $\Phi D_{K,T} (u_i, v_i)$ does not have a limit as the sequence $(x_i,y_i) \to (0,0)$.
\end{proof}

\begin{theorem} \label{theorem:discontinuous}
Let $K/\mathbb{Q}$ be a number field of degree $n$. Let $\mathfrak{p}$ be a prime ideal of $\mathcal{O}_K$ with $\mathfrak{p} \mid p$. Let $\{\mathfrak{p}\} \neq T$ be a nonempty set of prime ideals in $\mathcal{O}_K$ such that there exists a prime ideal in $T$ that does not divide $p$. Then the arithmetic subderivative $D_{K,T}: K \to K$ is $\mathfrak{p}$-adically discontinuous at every nonzero $x \in K$.
\end{theorem}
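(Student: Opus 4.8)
The plan is to prove discontinuity at a fixed nonzero $x$ by perturbing it multiplicatively by a cleverly chosen rational number, using Dirichlet's theorem on primes in arithmetic progressions to decouple the $\mathfrak{p}$-adic behavior from the behavior at the primes of $T$. First I would pass to a finite extension $L/K$ with $L/\mathbb{Q}$ Galois and a prime $\mathfrak{P}$ of $\mathcal{O}_L$ with $\mathfrak{P}\mid\mathfrak{p}$, exactly as in the proofs of Theorems \ref{theorem:cont_partial} and \ref{theorem:not_diff}; since $\ordfPP$ restricts to $\ordfpp$ on $K$ and $D_{K,T}(y)=D_{L,T_{L/K}}(y)$ for $y\in K$, it suffices to work with $D_{L,T_{L/K}}$ and the place $\mathfrak{P}$, where the invariants $e,f,g$ depend only on the rational prime below. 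Fix $\mathfrak{q}\in T$ with $\mathfrak{q}\mid q$ and $q\neq p$ (this is the hypothesis), and set $x_i:=x\,q\,w_i$, where $w_i$ is a prime number with $w_i\equiv q^{-1}\pmod{p^i}$, chosen unramified in $L$ and distinct from $q$. Such $w_i$ exists by Dirichlet's theorem because $q^{-1}\bmod p^i$ is a unit modulo $p^i$. Since $qw_i\equiv1\pmod{p^i}$ we get $\ordfPP(x_i-x)=\ordfPP(x)+e(p,K)\,\ordp(qw_i-1)\geq \ordfPP(x)+e(p,K)\,i\to+\infty$, so $x_i\to x$ $\mathfrak{p}$-adically.

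Next I would apply the Leibniz rule to write
\[
D_{K,T}(x_i)-D_{K,T}(x)=D_{K,T}(x)\,(qw_i-1)+x\,D_{K,T}(qw_i).
\]
The first summand has $\ordfPP$-valuation $\ordfPP(D_{K,T}(x))+\ordfPP(qw_i-1)\to+\infty$ (and is simply $0$ if $D_{K,T}(x)=0$), so it converges to $0$. For the second summand, a direct computation of the logarithmic subderivative of the rational number $qw_i$ gives
\[
\frac{D_{K,T}(qw_i)}{qw_i}=C_1+\frac{a_i}{w_i},\qquad C_1=\frac{m_q\,e(q,L)}{q\,g(q,L)}>0,\qquad a_i=\frac{m_{w_i}\,e(w_i,L)}{g(w_i,L)}\geq 0,
\]
where $m_q$ (resp. $m_{w_i}$) is the number of primes of $T_{L/K}$ above $q$ (resp. above $w_i$). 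Because $w_i$ is unramified one has $e(w_i,L)=1$ and $0\leq m_{w_i}\leq g(w_i,L)\leq[L:\mathbb{Q}]$, so the $a_i$ range over a fixed finite set of nonnegative rationals. Thus $x\,D_{K,T}(qw_i)=xq\,(C_1w_i+a_i)$, and the whole matter reduces to bounding $\ordp(C_1w_i+a_i)$ from above.

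The hard part is exactly this last bound: a priori the fixed contribution $C_1$ and the varying contribution $a_i/w_i$ could cancel $\mathfrak{p}$-adically and force $\ordp$ to $+\infty$, which would give $D_{K,T}(x_i)\to D_{K,T}(x)$ and wreck the argument. The observation that removes this danger is a positivity argument. Writing $C_1w_i+a_i=(C_1q^{-1}+a_i)+C_1(w_i-q^{-1})$, the second term has $\ordp(C_1(w_i-q^{-1}))=\ordp(C_1)+\ordp(w_i-q^{-1})\geq\ordp(C_1)+i\to+\infty$. The first term $C_1q^{-1}+a_i$ is the sum of a strictly positive rational and a nonnegative rational, hence strictly positive and in particular nonzero; as $a_i$ ranges over a finite set, $\ordp(C_1q^{-1}+a_i)$ is bounded above by some constant $B$. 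Therefore $\ordp(C_1w_i+a_i)=\ordp(C_1q^{-1}+a_i)\leq B$ for all large $i$.

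Consequently $\ordfPP\!\big(x\,D_{K,T}(qw_i)\big)=\ordfPP(x)+e(p,K)\,\ordp(C_1w_i+a_i)$ stays bounded while the first summand blows up, so $\ordfPP\!\big(D_{K,T}(x_i)-D_{K,T}(x)\big)$ does not tend to $+\infty$. Since $\ordfPP$ agrees with $\ordfpp$ on $K$, this exhibits a sequence $x_i\to x$ with $D_{K,T}(x_i)\not\to D_{K,T}(x)$, proving that $D_{K,T}$ is $\mathfrak{p}$-adically discontinuous at every nonzero $x\in K$. I expect the genuine obstacle to be only this cancellation issue; once the sign/positivity of $C_1q^{-1}+a_i$ is noticed, the estimate is routine, and no appeal to Chebotarev-type density is needed beyond the classical Dirichlet theorem used to produce $w_i$.
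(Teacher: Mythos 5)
Your proof is correct, and while its skeleton matches the paper's (reduce to a Galois extension $L/\mathbb{Q}$, perturb $x$ multiplicatively by a rational number that is $\equiv 1 \pmod{p^i}$ manufactured with Dirichlet's theorem, split $D_{K,T}(x_i)-D_{K,T}(x)$ by the Leibniz rule into a term that vanishes $\mathfrak{p}$-adically plus $x$ times the subderivative of the perturbing rational, then show the latter has bounded valuation), the crucial anti-cancellation step is handled by a genuinely different mechanism. The paper perturbs by $q_0^{p^M}/q_i$ with $q_i=q_0^{p^M}+n_ip^i$ prime, and rules out cancellation by a valuation comparison: $q_0$ is chosen to minimize $\nu_p(g(q_0,K))$ over $\mathbb{P}_T$ and $M$ exceeds $\nu_p(j)$ for all $j\le n$, forcing the $q_0$-contribution and the possible $q_i$-contribution to have distinct valuations, with a case split on whether $q_i\in\mathbb{P}_T$. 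You perturb by $qw_i$ with $w_i\equiv q^{-1}\pmod{p^i}$ and rule out cancellation by positivity: the relevant quantity $C_1/q+a_i$ is a sum of a fixed positive rational and a nonnegative rational drawn from a finite set, hence a nonzero rational with $\nu_p$ bounded above, and the ultrametric equality finishes the job. Your positivity observation is arguably cleaner --- it needs no minimizing prime $q_0$, no constant $M$, and no case analysis on whether the auxiliary prime lies under $T$ --- whereas the paper's bookkeeping yields the exact value of $\nu_{\mathfrak{p}}(D_{K,T}(x)-D_{K,T}(x_i))$. Two cosmetic corrections: with the paper's normalization ($\nu_{\mathfrak{Q}}$ extends $\nu_q$, so $\nu_{\mathfrak{Q}}(q)=1$ for $\mathfrak{Q}\mid q$) the constants are $C_1=m_q/(q\,g(q,L))$ and $a_i=m_{w_i}/g(w_i,L)$, without the factors $e(q,L)$ and $e(w_i,L)$, and likewise $\nu_{\mathfrak{P}}(x_i-x)=\nu_{\mathfrak{P}}(x)+\nu_p(qw_i-1)$ without the factor $e(p,K)$; these slips only rescale fixed positive constants and do not affect the argument.
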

\begin{proof}
We first assume $K/\mathbb{Q}$ is Galois. For each prime $q \in \mathbb{P}$, let $r_q$ be the number of prime ideals $\mathfrak{q} \in T$ such that $\mathfrak{q} \mid q$. Let $\mathbb{P}_T := \{q \in \mathbb{P} \mid r_q \neq 0, q \neq p\}$ and we know $0 \leq \ordp(g(q,K)) \leq \ordp(n)$ for all $q \in \mathbb{P}_T$. Let $q_0 \in \mathbb{P}_T$ be a prime such that $\ordp(g(q_0,K)) = \mathrm{min}\{\ordp(g(q,K)) \mid q \in \mathbb{P}_T\}$. Let $M := \textrm{max}\{\ordp(j): 1 \leq j \leq n \}+1$. For each integer $i \geq 1$, the Dirichlet's theorem on arithmetic progression implies there are infinitely many primes in the arithmetic progression $q_0^{p^M}, q_0^{p^M} + p^i, q_0^{p^M} + 2p^i, \ldots$. Set $n_0:=0$. For each $i \geq 1$, let $n_i > n_{i-1}$ be a positive integer such that $q_i := q_0^{p^M} + n_ip^i$ is a prime, that is, one prime from each arithmetic progression. Hence we know that $p, q_0, q_1, q_2, \dots$ is a list of pairwise distinct prime numbers. Let $x_i := q_0^{p^M}x/q_i \in K$. One can show that 
\[\lim_{i \to +\infty} \ordfpp(x - x_i) = \lim_{i \to +\infty} \ordfpp\Big(\dfrac{x n_ip^i}{q_i}\Big) = \lim_{i \to +\infty} \ordfpp(x n_ip^i) = +\infty.\]
This means that the sequence $x_i$ converges to $x$ $\mathfrak{p}$-adically. We now show that $D_{K,T}(x_i)$ does not converge to $D_{K,T}(x)$ $\mathfrak{p}$-adically. We have
\begin{align*}
    D_{K,T}(x) - D_{K,T}(x_i) &= D_{K,T}(x) - \Big(\dfrac{q_0^{p^M}}{q_i} D_{K,T}(x) + x D_{K,T}\big(\dfrac{q_0^{p^M}}{q_i}\big)\Big) \\
    &= \dfrac{n_ip^i}{q_i} D_{K,T}(x) - x \dfrac{D_{K,T}(q_0^{p^M}) q_i - q_0^{p^M} D_{K,T}(q_i)}{q_i^2} \\
   &= \dfrac{n_ip^i}{q_i} D_{K,T}(x) - \dfrac{x r_{q_0}p^M q_0^{p^M-1}}{g(q_0,K)q_i} + \dfrac{x q_0^{p^M} D_{K,T}(q_i) }{q_i^2}. \\
\end{align*}

We analyze the $\mathfrak{p}$-adic valuation of each of three summands separately. For the first summand, we have
\[\lim_{i \to +\infty} \ordfpp\Big(\dfrac{n_ip^i}{q_i} D_{K,T}(x)\Big) = \lim_{i \to +\infty} \ordfpp(p^i) = +\infty.\]
For the second summand, as $i \gg 0$, we have
\[\ordfpp\Big(\dfrac{x r_{q_0} p^M q_0^{p^M-1}}{g(q_0,K)q_i}\Big) = \ordfpp\Big(\dfrac{x r_{q_0} p^M}{g(q_0,K)}\Big) = \ordfpp\Big(\dfrac{xr_{q_0}}{g(q_0,K)}\Big) +  M .\]
For the third summand, if $q_i \notin \mathbb{P}_T$, then $D_{K,T}(q_i) = 0$ so it has no contribution to the $\mathfrak{p}$-adic valuation. On the other hand, if $q_i \in \mathbb{P}_T$, then we have
\[\ordfpp\Big(\dfrac{x q_0^{p^M} D_{K,T}(q_i) }{q_i^2} \Big) = \ordfpp\Big(\dfrac{x q_0^{p^M} r_{q_i}}{g(q_i,K)q_i^2}\Big) = \ordfpp\Big(\dfrac{x r_{q_i}}{g(q_i,K)}\Big).\]
Since $1 \leq r_{q_i} \leq n$, we know that $M > \ordp(r_{q_i})$ by definition. We also know that $\ordp(g(q_0,K)) \leq \ordp(g(q_i,K))$ for all $i \geq 1$. Hence
\[\ordfpp\Big(\dfrac{x r_{q_0}}{g(q_0,K)}\Big) +M > \ordfpp\Big(\dfrac{x r_{q_i}}{g(q_i,K)}\Big).\]
This implies that 
\begin{equation*}
\ordfpp(D_{K,T}(x) - D_{K,T}(x_i)) = \begin{cases}
\ordfpp\Big(\frac{x r_{q_i}}{g(q_i,K)}\Big), & \text{if $q_i \in \mathbb{P}_T$};\\
\ordfpp\Big(\frac{x r_{q_0}}{g(q_0,K)}\Big) +M, & \text{if $q_i \notin \mathbb{P}_T$}.
\end{cases}
\end{equation*}
This implies that
\[\lim_{i \to +\infty} \ordfpp(D_{K,T}(x) - D_{K,T}(x_i)) \neq +\infty.\]

Now we assume that $K/\mathbb{Q}$ is not necessarily Galois. Let $L/K$ be a finite extension such that $L/\mathbb{Q}$ is Galois, and $\mathfrak{P}$ a prime ideal of $\mathcal{O}_L$ such that $\mathfrak{P} \mid \mathfrak{p}$. Since $T$ contains a prime ideal that does not divide $p$, we know that $T_{L/K}$ also contains a prime ideal that does not divide $p$. Let $x_i \in K$ be defined as above. Then we know that $x_i$ converges to $x$ $\mathfrak{p}$-adically in $K$, and thus $\mathfrak{P}$-adically in $L$ since $\ordfpp$ and $\nu_{\mathfrak{P}}$ agree on $K$. Since $L/\mathbb{Q}$ is Galois, we know that
\[\lim_{i \to +\infty}(\nu_{\mathfrak{P}}(D_{L,T_{L/K}}(x_i) - D_{L,T_{L/K}}(x))) \neq +\infty.\] 
Hence 
\[\lim_{i \to +\infty}(\ordfpp(D_{K,T}(x_i) - D_{K,T}(x)) = \lim_{i \to +\infty} (\nu_{\mathfrak{P}}(D_{L,T_{L/K}}(x_i) - D_{L,T_{L/K}}(x))) \neq +\infty.\]
This shows that $D_{K,T}$ is discontinuous at $x$. 
\end{proof}

\begin{corollary} \label{corollary:generalization2}
Let $\{p\} \neq T$ be a nonempty set of prime numbers. The arithmetic subderivative $D_{\mathbb{Q}, T}$ is $p$-adically discontinuous at any nonzero $x \in \mathbb{Q}$.
\end{corollary}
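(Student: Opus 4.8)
The plan is to deduce this corollary immediately from Theorem \ref{theorem:discontinuous} by setting $K = \mathbb{Q}$ and $\mathfrak{p} = (p)$. In this base case $\mathcal{O}_K = \mathbb{Z}$ is a principal ideal domain in which every nonzero prime ideal has the form $(q)$ for a rational prime $q$, and the divisibility relation $(q) \mid (p)$ holds precisely when $q = p$; in particular $(p)$ is the unique prime ideal dividing $p$. Identifying the set $T$ of prime numbers with the corresponding set of prime ideals $\{(q) : q \in T\}$, the subderivative $D_{\mathbb{Q},T}$ appearing in the corollary coincides with the operator $D_{K,T}$ of the theorem, so the task reduces entirely to verifying that the theorem's hypotheses are satisfied.

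It then remains only to check those hypotheses. The condition $\{\mathfrak{p}\} \neq T$ is exactly the standing assumption $\{p\} \neq T$. For the remaining condition, that $T$ contain a prime ideal not dividing $p$, I would observe that since $(p)$ is the only prime ideal dividing $p$ over $\mathbb{Q}$, a nonempty $T$ contains a prime ideal not dividing $p$ if and only if $T \neq \{(p)\}$ — which is again precisely our hypothesis. (Equivalently, one may argue by cases: if $p \notin T$ then any element of the nonempty set $T$ is a prime distinct from $p$, while if $p \in T$ then $T \neq \{p\}$ forces a second element $q \neq p$, and in either case $(q) \nmid (p)$.) Thus over $\mathbb{Q}$ both hypotheses of Theorem \ref{theorem:discontinuous} collapse into the single condition $\{p\} \neq T$.

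With both hypotheses confirmed, Theorem \ref{theorem:discontinuous} applies directly and shows that $D_{\mathbb{Q},T}$ is $p$-adically discontinuous at every nonzero $x \in \mathbb{Q}$, which is the assertion of the corollary. I expect no genuine obstacle here: $\mathbb{Q}$ is its own Galois closure over $\mathbb{Q}$, so the non-Galois descent machinery inside the theorem is never invoked, and the whole argument amounts to the elementary translation between rational primes and prime ideals of $\mathbb{Z}$ together with the remark that over $\mathbb{Q}$ the two separate hypotheses of the theorem become equivalent to $\{p\} \neq T$.
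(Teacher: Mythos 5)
Your proposal is correct and is essentially identical to the paper's own proof, which simply applies Theorem \ref{theorem:discontinuous} with $K = \mathbb{Q}$ and $\mathfrak{p} = (p)$. Your additional verification that over $\mathbb{Q}$ the hypothesis ``$T$ contains a prime ideal not dividing $p$'' collapses into $T \neq \{(p)\}$ is accurate and just makes explicit what the paper leaves implicit.
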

\begin{proof}
Apply Theorem \ref{theorem:discontinuous} by taking $K = \mathbb{Q}$ and $\mathfrak{p} = (p)$. 
\end{proof}

\begin{remark}
Corollaries \ref{corollary:generalization1} and \ref{corollary:generalization2} together give answers to all open questions about $p$-adic continuity and discontinuity of arithmetic subderivative over $\mathbb{Q}$ listed in \cite[Section 7]{HMT3}.
\end{remark}

The only case that is left for consideration is when all prime ideals in $T$ sit above the same $p$. This case will be fully answered by the next theorem when we assume $T$ is finite.

\begin{theorem} \label{theorem:discontinuous_special}
Let $K/\mathbb{Q}$ be a number field of degree $n$. Let $\mathfrak{p}$ be a prime ideal of $\mathcal{O}_K$ with $\mathfrak{p} \mid p$. Let $\{\mathfrak{p}\} \neq T$ be a nonempty finite set of prime ideals in $\mathcal{O}_K$. Then the arithmetic subderivative $D_{K,T}: K \to K$ is $\mathfrak{p}$-adically discontinuous at any nonzero $x \in K$.
\end{theorem}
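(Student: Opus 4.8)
The plan is to combine Theorem \ref{theorem:discontinuous} with a direct construction that handles the remaining case. If $T$ contains a prime ideal not dividing $p$, then discontinuity at every nonzero $x$ is already furnished by Theorem \ref{theorem:discontinuous}, so I may assume that every prime ideal in $T$ divides $p$. Since $T \neq \{\mathfrak{p}\}$ is finite, there is then a prime $\mathfrak{p}' \in T$ with $\mathfrak{p}' \neq \mathfrak{p}$ and $\mathfrak{p}' \mid p$. For Galois $K/\mathbb{Q}$ every prime of $T$ carries the same denominator $pg(p,K)$, so $D_{K,T}(y) = \frac{y}{pg(p,K)} \sum_{\mathfrak{q} \in T} \nu_{\mathfrak{q}}(y)$; the non-Galois case will be reduced to this one at the end, exactly as in the proof of Theorem \ref{theorem:discontinuous}, taking care that the approximating sequence is built from an element of $K$ so that it lies in $K$.

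The core idea is to perturb $x$ multiplicatively, staying $\mathfrak{p}$-adically close to it while forcing the subderivative to jump. First I would invoke the approximation theorem to produce a single $\pi \in K^{\times}$ with $\ordfpp(\pi) > 0$, with $\nu_{\mathfrak{p}'}(\pi) < 0$, and with $\nu_{\mathfrak{q}}(\pi) > 0$ for every $\mathfrak{q} \in T \setminus \{\mathfrak{p}'\}$; these are finitely many independent prescriptions of valuations at distinct primes, so such a $\pi$ exists. Then I would set $u_i := 1 + \pi^{a_i}$ and $x_i := x u_i \in K$, where $a_i \to +\infty$ runs through integers coprime to $p$. Since $\ordfpp(\pi) > 0$, we get $\ordfpp(x - x_i) = \ordfpp(x) + a_i \ordfpp(\pi) \to +\infty$, so $x_i \to x$ $\mathfrak{p}$-adically.

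Next I would compute the jump by the Leibniz rule: $D_{K,T}(x_i) - D_{K,T}(x) = D_{K,T}(x)\,\pi^{a_i} + x\, D_{K,T}(u_i)$. The first summand has $\ordfpp$-value $\ordfpp(D_{K,T}(x)) + a_i \ordfpp(\pi) \to +\infty$ (and is $0$ if $D_{K,T}(x) = 0$), hence tends to $0$ $\mathfrak{p}$-adically. For the second summand, the sign conditions on $\pi$ force, for all large $i$, that $\nu_{\mathfrak{q}}(u_i) = 0$ for $\mathfrak{q} \in T \setminus \{\mathfrak{p}'\}$ (including $\mathfrak{q} = \mathfrak{p}$ when $\mathfrak{p} \in T$), while $\nu_{\mathfrak{p}'}(u_i) = a_i \nu_{\mathfrak{p}'}(\pi)$; thus $\sum_{\mathfrak{q} \in T} \nu_{\mathfrak{q}}(u_i) = a_i \nu_{\mathfrak{p}'}(\pi)$. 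Because $p \nmid a_i$, the $p$-adic valuation of this sum is the constant $\nu_p(\nu_{\mathfrak{p}'}(\pi))$, so $\ordfpp(x\, D_{K,T}(u_i)) = \ordfpp(x) - \ordfpp(pg(p,K)) + \nu_p(\nu_{\mathfrak{p}'}(\pi))$ is a fixed finite number for all large $i$. As the two summands have distinct $\ordfpp$-values for large $i$, the value $\ordfpp(D_{K,T}(x_i) - D_{K,T}(x))$ equals this fixed finite number and does not tend to $+\infty$, giving discontinuity at $x$.

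The main obstacle is arranging the jump term $\sum_{\mathfrak{q} \in T} \nu_{\mathfrak{q}}(u_i)$ to have bounded $p$-adic valuation even though the condition $u_i \to 1$ at $\mathfrak{p}$ annihilates the $\mathfrak{p}$-contribution; the resolution is to concentrate a negative valuation at the single auxiliary prime $\mathfrak{p}' \in T$, so that the sum collapses to one term $a_i \nu_{\mathfrak{p}'}(\pi)$, and then to strip off the factor $a_i$ by taking it coprime to $p$. Finally, for non-Galois $K$ I would pass to a Galois closure $L/\mathbb{Q}$ with $\mathfrak{P} \mid \mathfrak{p}$ and the set $T_{L/K}$, observe that $T_{L/K}$ still consists of primes above $p$ and is $\neq \{\mathfrak{P}\}$, and rerun the computation through $D_{K,T}(x_i) = D_{L, T_{L/K}}(x_i)$; since $\pi \in K$, the only change is that $\sum_{\mathfrak{Q} \in T_{L/K}} \nu_{\mathfrak{Q}}(u_i)$ becomes $m\, a_i \nu_{\mathfrak{p}'}(\pi)$, where $m$ is the number of primes of $L$ above $\mathfrak{p}'$, whose $p$-adic valuation is again constant in $i$.
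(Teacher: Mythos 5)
Your proof is correct and rests on the same mechanism as the paper's: perturb $x$ multiplicatively by a factor that is a unit at $\mathfrak{p}$ and tends to $1$ $\mathfrak{p}$-adically, apply the Leibniz rule, and use the ultrametric inequality to isolate a summand of fixed finite valuation coming from an auxiliary prime of $T$. The paper's version is slightly leaner --- it uses the Chinese remainder theorem to pin the auxiliary valuation at exactly $1$ (so the relevant sum of valuations is constant and no coprimality-to-$p$ device is needed), and it treats every finite $T \neq \{\mathfrak{p}\}$ uniformly without first splitting off the case already covered by Theorem \ref{theorem:discontinuous} --- but your growing-valuation variant with $p \nmid a_i$ works just as well.
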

\begin{proof}
We first assume $K/\mathbb{Q}$ is Galois. Let $T \, \backslash \, \{\mathfrak{p}\} = \{\mathfrak{p}_1, \ldots, \mathfrak{p}_n\}$. By the Chinese remainder theorem, for each $i \geq 1$, there exists $x_i \in K$ such that $\ordfpp(1 - x_i) = i$, $\nu_{\mathfrak{p}_1}(x_i) = 1$, and $\nu_{\mathfrak{p}_j}(x_i) =0$ for $2 \leq j \leq n$. This implies that $\ordfpp(x_i) = 0$. Hence for all $i \geq 1$, we have \[D_{K,T}(x_i) = \dfrac{x_i}{p_1g(p_1,K)}.\]

The sequence $x_ix$ converges to $x$ $\mathfrak{p}$-adically because as $i \to +\infty$, we have
\[\ordfpp(x - x_ix) = \ordfpp(1-x_i)+\ordfpp(x) \to +\infty.\]
On the other hand, $D_{K,T}(x_ix)$ does not converge to $D_{K,T}(x)$ $\mathfrak{p}$-adically because as $i \gg 0$, we have
\begin{align*}
    \ordfpp(D_{K,T}(x) - D_{K,T}(x_ix)) &= \ordfpp(D_{K,T}(x) - x_iD_{K,T}(x) - x D_{K,T}(x_i)) \\
    &= \ordfpp\Big(D_{K,T}(x)(1-x_i) - \dfrac{x x_i}{p_1g(p_1,K)}\Big) \\
    &= \ordfpp(x) - \ordfpp(p_1) - \ordfpp(g(p_1,K)).
\end{align*}
Hence
\[\lim_{i \to +\infty} \ordfpp(D_{K,T}(x) - D_{K,T}(x_ix)) \neq +\infty,\]
and $D_{K,T}$ is discontinuous at $x$.

If $K/\mathbb{Q}$ is not necessarily Galois, then one can prove that $D_{K,T}$ is discontinuous at $x$ using the same strategy as in Theorem \ref{theorem:discontinuous}.
\end{proof}

\providecommand{\bysame}{\leavevmode\hbox to3em{\hrulefill}\thinspace}


\begin{thebibliography}{10}

\bibitem{barbeau}
E. J. Barbeau, \emph{Remarks on an arithmetic derivative}, Canad. Math.
  Bull. \textbf{4} (1961), no.~2, 117–122.

\bibitem{EX1}
B. Emmons and X. Xiao, \emph{The arithmetic partial derivative}, J. Integer
  Seq. \textbf{25} (2022), Article 22.4.7.

\bibitem{putnam}
A. M. Gleason, R. E. Greenwood, and L. M. Kelly, \emph{{The William
  Lowell Putnam Mathematical Competition - Problems and Solutions: 1938-1964}},
  Mathematical Association of America, 1980.

\bibitem{Haukkanen}
P. Haukkanen, M. Mattila, J. K. Merikoski, and T. Tossavainen,
  \emph{Can the arithmetic derivative be defined on a non-unique factorization
  domain?}, J. of Integer Seq. \textbf{16} (2013), Article 13.1.2.

\bibitem{HMT4}
P. Haukkanen, J. K. Merikoski, and T. Tossavainen, \emph{{Arithmetic
  subderivatives and Leibniz-additive functions}}, Ann. Math. Informat.
  \textbf{50} (2019), 145--157.

\bibitem{HMT5}
\bysame, \emph{Arithmetic subderivatives: Discontinuity and continuity}, J.
  Integer Seq. \textbf{22} (2019), Article 19.7.4.

\bibitem{HMT3}
\bysame, \emph{Arithmetic subderivatives: $p$-adic discontinuity and
  continuity}, J. Integer Seq. \textbf{23} (2020), Article 20.7.3.

\bibitem{ratiso}
F. L. Kluempen and D. M. Reboli, \emph{When are two subgroups of the
  rationals isomorphic?}, Math. Mag. \textbf{77} (2004), no.~5, 374--379.

\bibitem{Pandey}
R. K. Mistri and R. K. Pandey, \emph{Derivative of an ideal in a number
  ring}, Integers \textbf{14} (2014), A24.

\bibitem{p_adic_analysis}
A. M. Robert, \emph{{A Course in $p$-adic Analysis}}, {First} ed., Graduate
  Texts in Mathematics, vol. 198, Springer-Verlag New York, 2000.

\bibitem{localfields}
J.-P. Serre, \emph{{Local Fields}}, {First} ed., Graduate Texts in
  Mathematics, vol.~67, Springer-Verlag New York, 1979.

\bibitem{shelly}
D. J. M. Shelly, \emph{Una cuesti{\'o}n de la teoria de los
  numeros.}, Asociation {Esp}. {Granada} 1911, 1--12 (Spanish).

\bibitem{Ufnarovski}
V. Ufnarovski and B. Åhlander, \emph{How to differentiate a number}, J.
  Integer Seq. \textbf{6} (2003), Article 03.3.4.

\end{thebibliography}
\end{document}